\documentclass[a4paper,oneside,11pt]{amsart}
\usepackage[latin1]{inputenc}
\usepackage[T1]{fontenc}
\usepackage{amssymb,amsfonts,amsmath,amsthm}
\usepackage{paralist}
\usepackage{chngpage}

\setlength{\textwidth}{14cm}
\setlength{\textheight}{22.8cm}
\setlength{\oddsidemargin}{1cm}

\usepackage[colorlinks=true, linkcolor=blue, citecolor=red, urlcolor=blue]{hyperref}
\usepackage[stretch=10,shrink=10,step=4,kerning=true,protrusion=true,final]{microtype}


\newtheorem{theorem}{Theorem}
\newtheorem{proposition}[theorem]{Proposition}
\newtheorem{corollary}[theorem]{Corollary}
\newtheorem{lemma}[theorem]{Lemma}

\theoremstyle{definition}
\newtheorem{definition}[theorem]{Definition}

\theoremstyle{remark}
\newtheorem{remark}[theorem]{Remark}
\newtheorem{example}[theorem]{Example}

\setcounter{tocdepth}{3}
\setcounter{secnumdepth}{3}

\newcommand{\ie}{i.e.\ }
\newcommand{\eg}{e.g.\ }
\newcommand{\resp}{resp.\ }
\newcommand{\PP}{\mathbf{P}}
\newcommand{\CC}{\mathbf{C}}
\newcommand{\bH}{\mathbf{H}}
\newcommand{\RR}{\mathbf{R}}

\newcommand{\NN}{\mathbf{N}}
\newcommand{\KK}{\mathbf{K}}
\newcommand{\GL}{\mathrm{GL}}

\newcommand{\PSL}{\mathrm{PSL}}
\newcommand{\SO}{\mathrm{SO}}
\newcommand{\OO}{\mathrm{O}}

\newcommand{\U}{\mathrm{U}}
\newcommand{\Sp}{\mathrm{Sp}}

\newcommand{\g}{\mathfrak{g}}
\newcommand{\aaa}{\mathfrak{a}}

\newcommand{\Hom}{\mathrm{Hom}}
\newcommand{\Aut}{\mathrm{Aut}}
\newcommand{\Diag}{\mathrm{Diag}}
\newcommand{\HH}{\mathbb{H}}
\newcommand{\overhatHpq}{\overline{\mathbb{H}_{\mathbf{K}}^{p,q}}\hspace{-0.54cm}\hat{\vphantom{\mathbb{H}}}\hspace{0.54cm}}

\newcommand{\ad}{\mathrm{ad}}

\newcommand{\F}{\mathcal{F}}
\newcommand{\specialK}{\mathcal{K}}

\newcommand*{\longhookrightarrow}{\ensuremath{\lhook\joinrel\relbar\joinrel\rightarrow}}

\title[Compactification of certain Clifford--Klein forms]{Compactification of certain Clifford--Klein forms of reductive homogeneous spaces}

\author[F.\ Gu\'eritaud]{Fran\c{c}ois Gu\'eritaud}
\address{CNRS and Universit\'e Lille 1, Laboratoire Paul Painlev\'e, 59655 Villeneuve d'Ascq Cedex, France 
\newline Wolfgang-Pauli Institute, University of Vienna, CNRS-UMI 2842, Austria}\email{francois.gueritaud@math.univ-lille1.fr}

\author[O.\ Guichard]{Olivier Guichard}
\address{Universit\'e de Strasbourg, IRMA, 7 rue  Descartes, 67000 Strasbourg, France}
\email{olivier.guichard@math.unistra.fr}

\author[F.\ Kassel]{Fanny Kassel}
\address{CNRS and Universit\'e Lille 1, Laboratoire Paul Painlev\'e, 59655 Villeneuve d'Ascq Cedex, France}
\email{fanny.kassel@math.univ-lille1.fr}

\author[A.\ Wienhard]{Anna Wienhard}
\address{Ruprecht-Karls Universit\"at Heidelberg, Mathematisches Institut, Im Neuenheimer Feld~288, 69120 Heidelberg, Germany
\newline HITS gGmbH, Heidelberg Institute for Theoretical Studies, Schloss-Wolfsbrun\-nenweg 35, 69118 Heidelberg, Germany}
\email{wienhard@uni-heidelberg.de}

\thanks{FG and FK were partially supported by the Agence Nationale de la Recherche under the grant DiscGroup (ANR-11-BS01-013) and through the Labex CEMPI (ANR-11-LABX-0007-01).
AW was partially supported by the National Science Foundation under agreement DMS-1536017, by the Sloan Foundation, by the Deutsche Forschungsgemeinschaft, and by the European Research Council under ERC-Consolidator grant 614733.
Part of this work was carried out while OG, FK, and AW were in residence at the MSRI in Berkeley, California, supported by the National Science Foundation under grant 0932078~000.
The authors also acknowledge support from U.S. National Science Foundation grants DMS 1107452, 1107263, 1107367 ``RNMS: GEometric structures And Representation varieties'' (the GEAR Network).}

\begin{document}

\numberwithin{theorem}{section}
\numberwithin{equation}{section}

\begin{abstract}
We describe smooth compactifications of certain families of reductive homogeneous spaces such as group manifolds for classical Lie groups, or pseudo-Riemannian analogues of real hyperbolic spaces and their complex and quaternionic counterparts.
We deduce compactifications of Clifford--Klein forms of these homogeneous spaces, namely quotients by discrete groups $\Gamma$ acting properly discontinuously, in the case that $\Gamma$ is word hyperbolic and acts via an Anosov representation.
In particular, these Clifford--Klein forms are topologically tame.
\end{abstract}

\maketitle

\vspace{-0.5cm}

\section{Introduction}

The goal of this note  is two-fold. 
First, we describe compactifications of certain families of reductive homogeneous spaces $G/H$ by embedding $G$ into a larger group $G'$ and realizing $G/H$ as a $G$-orbit in a flag manifold of~$G'$.
These homogeneous spaces include:
\begin{itemize}
  \item group manifolds for classical Lie groups (Theorems \ref{thm:compactif-Aut(b)} and~\ref{thm:compactif-GL}),
  \item pseudo-Riemannian analogues of real hyperbolic spaces and their complex and quaternionic counterparts (Theorem~\ref{thm:Hpq}.\eqref{item:compactif-Hpq}),
  \item certain affine symmetric spaces such as $\OO(2p,2q)/\U(p,q)$ or $\U(2p,2q)/\Sp(p,q)$ (Proposition~\ref{prop:compactification_general}.\eqref{item:compactif_general_G/H}),
  \item other reductive homogeneous spaces (Proposition~\ref{prop:compactification_general_nonsym}.\eqref{item:compactif_general_G/H}).
\end{itemize}
Second, we use these compactifications and a construction of domains of discontinuity from \cite{Guichard_Wienhard_DoD} to compactify Clifford--Klein forms of $G/H$, \ie quotient manifolds $\Gamma\backslash G/H$, in the case that $\Gamma$ is a word hyperbolic group whose action on $G/H$ is given by an Anosov representation $\rho : \Gamma\rightarrow G\hookrightarrow G'$.
We deduce that these Clifford--Klein forms are topologically tame.

Anosov representations (see Section~\ref{subsec:def-Ano}) were introduced in \cite{Labourie_anosov}.
They provide a rich class of quasi-isometric embeddings of word hyperbolic groups into reductive Lie groups with remarkable properties, generalizing convex cocompact representations to higher real rank \cite{Labourie_anosov,Guichard_Wienhard_DoD, KapovichLeebPorti, KapovichLeebPorti14, KapovichLeebPorti14_2, GGKW_anosov}.
Examples include:
\begin{enumerate}[(a)]
  \item\label{item:2} The inclusion of convex cocompact subgroups in real semisimple Lie groups of real rank~$1$ \cite{Labourie_anosov, Guichard_Wienhard_DoD};
  \item Representations of surface groups into split real semisimple Lie groups that belong to the Hitchin component \cite{Labourie_anosov, Fock_Goncharov, Guichard_Wienhard_DoD};
  \item Maximal representations of surface groups into semisimple Lie groups of Hermitian type \cite{Burger_Iozzi_Labourie_Wienhard, Burger_Iozzi_Wienhard_anosov, Guichard_Wienhard_DoD};
  \item The inclusion of quasi-Fuchsian subgroups in $\SO(2,d)$ \cite{Barbot_Merigot_fusionPubli, BarbotDefAdSQF};
  \item Holonomies of compact convex $\RR\PP^n$-manifolds whose fundamental group is word hyperbolic \cite{Benoist_CD1}.
\end{enumerate}

\subsection{Compactifying group manifolds} \label{subsec:intro-compact-G}

Any real reductive Lie group $G$ can be seen as an affine symmetric space $(G\times G)/\Diag(G)$ under the action of $G\times G$ by left and right multiplication.
We call $G$ with this structure a \emph{group manifold}.
We describe a smooth compactification of the group manifold $G$ when $G$ is a classical group.
This compactification is very elementary, in particular when $G$ is the automorphism group of a nondegenerate bilinear form. 
It shares some common features with the so-called \emph{wonderful compactifications} of algebraic groups over an algebraically closed field constructed by De Concini and Procesi \cite{DeConcini_Procesi_83} or Luna and Vust \cite{Luna_Vust}, as well as with the compactifications constructed by Neretin \cite{Neretin98, Neretin03}.
After completing this note, we learned that this compactification had first been discovered by He \cite{He_02}; we still include our original self-contained description for the reader's convenience.

We first consider the case that $G$ is $\OO(p,q)$, $\OO(m,\CC)$, $\Sp(2n,\RR)$, $\Sp(2n,\CC)$, $\U(p,q)$, $\Sp(p,q)$, or $\OO^*(2m)$.
In other words, $G = \Aut_{\KK}(b)$ is the group of $\KK$-linear automorphisms of a  nondegenerate $\RR$-bilinear form $b : V \otimes_{\RR} V \to \KK$ on a $\KK$-vector space~$V$, for $\KK=\RR$, $\CC$, or the ring $\bH$ of quaternions. We assume that $b$ is \(\KK\)-linear in the second variable, and that $b$ is symmetric or antisymmetric (if $\KK=\RR$ or~$\CC$), or Hermitian or anti-Hermitian (if $\KK=\CC$ or~$\bH$).
We describe a smooth compactification of $G = \Aut_{\KK}(b)$ by embedding it into the compact space of maximal $(b\oplus -b)$-isotropic $\KK$-subspaces of $(V\oplus V, b \oplus -b)$.
Let $n\in\NN$ be the real rank of $G=\Aut_{\KK}(b)$ and $N =\dim_\KK(V)\geq 2n$ the real rank of $\Aut_{\KK}(b\oplus -b)$.
For any $0\leq i\leq n$, let $\F_i(b) = \F_i(-b)$ be the space of $i$-dimensional $b$-isotropic subspaces of~$V$; it is a smooth manifold with a transitive action of~$G$.
We use similar notation for $(V\oplus\nolinebreak V, b\oplus -b)$, with $0\leq i\leq N$.
For any subspace $W$ of $V\oplus V$, we set 
\begin{equation} \label{eqn:pi}
\pi(W) := \big(W \cap (V \oplus \{ 0\}), W \cap (\{ 0\} \oplus V)\big).
\end{equation}
This defines a map $\pi : \F_N(b\oplus -b) \rightarrow (\bigcup_{i=0}^n \F_i(b)) \times (\bigcup_{i=0}^n \F_i(-b))$.

\begin{theorem} \label{thm:compactif-Aut(b)}
 Let $G = \Aut_{\KK}(b)$ be as above.
 The space $X = \F_N(b\oplus -b)$ of maximal $(b\oplus -b)$-isotropic $\KK$-subspaces of $V \oplus V$ is a smooth compactification of the group manifold $(G\times G)/\Diag(G)$ with the following properties: 
\begin{enumerate}
   \item $X$ is a real analytic manifold (in fact complex analytic if $\KK=\CC$ and $b$ is symmetric or antisymmetric).
   Under the action of a maximal compact subgroup of $\Aut_{\KK}(b\oplus -b)$, it identifies with a Riemannian symmetric space of the compact type, given explicitly in Table~\ref{table1}.
    \item The $(G \times G)$-orbits in~$X$ are the submanifolds \( \mathcal{U}_i  := \pi^{-1}(\F_i(b) \times \F_i(-b)) \) for $0\leq\nolinebreak i\leq\nolinebreak n$, of dimension $\dim_{\RR}(\mathcal{U}_i) = \dim_{\RR}(G) - i^2 \dim_\RR(\KK)$. 
    The closure of $\mathcal{U}_i$ in $X$ is $\bigcup_{j\geq i} \mathcal{U}_j$.
   \item For $0\leq i\leq n$, the map $\pi$ defines a fibration of $\mathcal{U}_i$ over $\F_i(b) \times\nolinebreak \F_i(-b)$~with fibers isomorphic to $(H_i\times H_i)/\Diag(H_i)$, where $H_i = \Aut_{\KK}(b_{V_i})$ is the automorphism group of the form $b_{V_i}$ induced by $b$ on $V_i^{\perp_b}/V_i$ for some $V_i\in\F_i(b)$.
\end{enumerate}
In particular, \(\mathcal{U}_0\) is the unique open\,\((G\times G)\)-orbit and it identifies with \((G\times G) / \Diag(G)\).
\end{theorem}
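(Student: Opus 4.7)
The starting point is that every element $g \in G = \Aut_{\KK}(b)$ has graph $W_g := \{(v, g v) : v \in V\} \subset V \oplus V$, which is $(b \oplus -b)$-isotropic (since $b(v,v') - b(gv, gv') = 0$) and of $\KK$-dimension $N$, hence lies in $X = \F_N(b \oplus -b)$; moreover $W_g \cap (V \oplus \{0\}) = W_g \cap (\{0\} \oplus V) = 0$, so $W_g \in \mathcal{U}_0$. Conversely, any $W \in \mathcal{U}_0$ projects injectively, hence bijectively by dimension, onto each factor, so it is the graph of a $\KK$-linear map $V \to V$ which must preserve $b$ by the isotropy condition. Since $(g_1, g_2) \in G \times G$ acts componentwise with $(g_1, g_2) \cdot W_g = W_{g_2 g g_1^{-1}}$, the graph map provides a $(G \times G)$-equivariant identification $(G \times G)/\Diag(G) \simeq \mathcal{U}_0$; openness of $\mathcal{U}_0$ in $X$ is the transversality condition $\pi(W) = (0,0)$.

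For the full orbit decomposition, I set $W_1 := W \cap (V \oplus \{0\})$ and $W_2 := W \cap (\{0\} \oplus V)$ for $W \in X$, and first show $\dim W_1 = \dim W_2$: the first projection $p_1 : W \to V$ has kernel $W_2$ and image contained in $W_1^{\perp_b}$ (because $W_1$ is $(b \oplus -b)$-orthogonal to all of $W$), so $N - \dim W_2 \leq N - \dim W_1$, with the reverse inequality following by symmetry. This justifies the partition $X = \bigsqcup_{i=0}^n \mathcal{U}_i$. For transitivity of $G \times G$ on $\mathcal{U}_i$ and the fiber description, I fix $(V_i, W_i) \in \F_i(b) \times \F_i(-b)$ and consider $W \in \pi^{-1}(V_i, W_i) \cap \mathcal{U}_i$: the same orthogonality argument gives $V_i \oplus W_i \subset W \subset V_i^{\perp_b} \oplus W_i^{\perp_b}$, so $W$ descends to a maximal $(b_{V_i} \oplus -b_{W_i})$-isotropic subspace $\overline{W}$ of $(V_i^{\perp_b}/V_i) \oplus (W_i^{\perp_b}/W_i)$, and $W \in \mathcal{U}_i$ translates to $\overline{W}$ avoiding both summands. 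Witt's theorem identifies $b_{W_i}$ with $b_{V_i}$, so applying the previous paragraph to $H_i = \Aut_\KK(b_{V_i})$ identifies this fiber with $(H_i \times H_i)/\Diag(H_i)$. Combined with $G$-transitivity on $\F_i(b)$, this yields transitivity of $G \times G$ on $\mathcal{U}_i$ as well as the dimension count $\dim_\RR \mathcal{U}_i = \dim_\RR G - i^2 \dim_\RR \KK$, using standard formulas for $\dim_\RR \F_i(b)$ and $\dim_\RR H_i$. The closure relation $\overline{\mathcal{U}_i} = \bigcup_{j \geq i} \mathcal{U}_j$ then follows from lower semicontinuity of $W \mapsto \dim(W \cap (V \oplus \{0\}))$ together with an explicit degeneration inside $\mathcal{U}_0$ (letting a suitable one-parameter subgroup of $G$ tend to infinity) producing limit points in each $\mathcal{U}_j$.

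For the analytic structure and assertion (1), $X$ is a closed smooth orbit of $G' := \Aut_\KK(b \oplus -b)$ inside the $\KK$-isotropic Grassmannian of $V \oplus V$, hence of the form $G'/P$ for a parabolic $P$; when $\KK = \CC$ and $b$ is symmetric or antisymmetric, $G'$ is a complex Lie group and $X$ is a complex flag variety. To recognize $X$ as a compact Riemannian symmetric space, I would choose a maximal compact subgroup $K' \subset G'$ adapted to a positive-definite form on $V \oplus V$ compatible with $b \oplus -b$; the Iwasawa decomposition $G' = K' P$ gives transitivity of $K'$ on $X$, and the stabilizer of a base point turns out to be a symmetric subgroup of $K'$. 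The main obstacle is this last step, which requires case-by-case bookkeeping across the eight classical families of forms $b$ to produce the explicit entries of Table~\ref{table1}; by contrast, the graph embedding and orbit analysis in the first two paragraphs are uniform across all cases.
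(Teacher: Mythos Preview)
Your proof is correct and follows essentially the same route as the paper: the graph embedding $g \mapsto W_g$ to identify $\mathcal{U}_0$ with $G$, an orthogonality argument for $\dim W_1 = \dim W_2$ (the paper uses $W = W^{\perp}$ and a dimension count rather than your rank--nullity variant, but the content is identical), the reduction of the fiber $\pi^{-1}(V_i,W_i)$ to the $i=0$ case for $H_i$, and the Iwasawa decomposition plus a case-by-case stabilizer computation for part~(1). One slip: the map $W \mapsto \dim(W \cap (V \oplus \{0\}))$ is \emph{upper} semicontinuous, not lower (the set where the dimension is $\geq i$ is closed); you apply it correctly but misname it. Your explicit mention of a one-parameter degeneration to obtain $\mathcal{U}_j \subset \overline{\mathcal{U}_i}$ for $j>i$ is in fact more careful than the paper, which states only the semicontinuity direction.
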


\begin{table}[h!]
\centering
\begin{adjustwidth}{-1.2in}{-1.2in}
\centering
\begin{tabular}{|c|c|c|c|}
\hline
$G$ & $n$ & $N$ & $X$ as a Riemannian symmetric space \tabularnewline
\hline
$\OO(p,q)$ & $\min(p,q)$ & $p+q$ & $(\OO(p+q)\times\OO(p+q))/\Diag(\OO(p+q))$ \tabularnewline
$\U(p,q)$ & $\min(p,q)$ & $p+q$ & $(\U(p+q)\times\U(p+q))/\Diag(\U(p+q))$ \tabularnewline
$\Sp(p,q)$ & $\min(p,q)$ & $p+q$ & $(\Sp(p+q)\times\Sp(p+q))/\Diag(\Sp(p+q))$ \tabularnewline
$\Sp(2n,\RR)$ & $n$ & $2n$ & $\U(2n)/\OO(2n)$ \tabularnewline
$\Sp(2n,\CC)$ & $n$ & $2n$ & $\Sp(2n)/\U(2n)$ \tabularnewline
$\OO(m,\CC)$ & $\lfloor\frac{m}{2}\rfloor$ & $m$ & $\OO(2m)/\U(m)$ \tabularnewline
$\OO^*(2m)$ & $\lfloor\frac{m}{2}\rfloor$ & $m$ & $\U(2m)/\Sp(m)$ \tabularnewline
\hline
\end{tabular}
\end{adjustwidth}
\vspace{0.2cm}
\caption{The compactification $X$ of Theorem~\ref{thm:compactif-Aut(b)}.}
\label{table1}
\end{table}

\begin{remark}
For $G=\OO(p,q)$, $\U(p,q)$, or $\Sp(p,q)$, the compactification $X$ identifies with the group manifold $(G_c\times G_c)/\Diag(G_c)$ where $G_c$ is the compact real form of a complexification of~$G$.
For $G=\OO(n,1)$, the embedding of $G$ into $G_c = \OO(n+1)$ lifts the embedding of
$\HH^n_{\RR}\sqcup\HH^n_{\RR} = \OO(n,1)/\OO(n)$ into $\mathbb{S}^n_{\RR} = \OO(n+1)/\OO(n)$
with image the complement of the equatorial sphere $\mathbb{S}^{n-1}_{\RR}$.
\end{remark}

Similar compactifications are constructed for general linear groups $\GL_{\KK}(V)$ in Theorem~\ref{thm:compactif-GL} below.

\subsection{Compactifying Clifford--Klein forms of group manifolds}

\hspace{-0.3cm} Let $G = \Aut_{\KK}(b)$ be as above.
For any discrete group~$\Gamma$ and any representation $\rho :\nolinebreak \Gamma \to G$ with discrete image and finite kernel, the action of~$\Gamma$ on~$G$ via left multiplication by~$\rho$ is properly discontinuous.
The quotient $\rho(\Gamma)\backslash G$ is an orbifold, in general noncompact.
Suppose that $\Gamma$ is word hyperbolic and $\rho$ is $P_1(b)$-Anosov (see Section~\ref{sec:rem-Anosov} for definitions), where $P_1(b)$ is the stabilizer in~$G$ of a $b$-isotropic line.
Considering a suitable subset of the compactification of~$G$ described in Theorem~\ref{thm:compactif-Aut(b)}, we construct a compactification of $\rho(\Gamma)\backslash G$ which is an orbifold, or if $\Gamma$ is torsion-free, a smooth manifold.

\begin{theorem} \label{thm:intro_comp_quotient}
Let $\Gamma$ be a word hyperbolic group and $\rho: \Gamma \to G = \Aut_{\KK}(b)$ a $P_1(b)$-Anosov representation with boundary map $\xi: \partial_\infty \Gamma \to \F_1(b)$.
For any $0\leq i\leq n$, let $\specialK^i_{\xi}$ be the subset of $\F_i(b)$ consisting of subspaces $W$ containing $\xi(\eta)$ for some $\eta \in \partial_\infty \Gamma$, and let $\mathcal{U}^\xi_i$ be the complement in~$\mathcal{U}_i$ of $\pi^{-1}(\specialK^i_{\xi} \times \F_i(-b))$, where $\pi$ is the projection of \eqref{eqn:pi}.
Then 
$\rho(\Gamma) \times \{ e\}  \subset \Aut_{\KK}(b) \times \Aut_{\KK}(b)$ acts properly discontinuously and cocompactly on the open subset 
\[ \Omega := \bigcup_{i=0}^n\, \mathcal{U}^\xi_i \]
of $\F_N(b\oplus -b)$.
The quotient orbifold $(\rho(\Gamma) \times \{ e\}) \backslash \Omega$ is a compactification of
\[ \rho(\Gamma) \backslash G \,\simeq\, (\rho(\Gamma) \times \{ e\}) \backslash (G \times G)/\Diag(G). \]
If $\Gamma$ is torsion-free, then this compactification is a smooth manifold. 
\end{theorem}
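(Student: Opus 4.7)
The plan is to realize $\Omega$ as a Guichard--Wienhard cocompact domain of discontinuity in $\F_N(b\oplus -b)$ for an auxiliary Anosov representation into the larger group $G':=\Aut_{\KK}(b\oplus -b)$. Explicitly, consider
\[ \rho' : \Gamma \xrightarrow{\;\rho\;} G \;\longhookrightarrow\; G\times G \;\longhookrightarrow\; G', \qquad \gamma \longmapsto (\rho(\gamma),e), \]
where $G\times G\subset G'$ is the stabilizer of the decomposition $V\oplus V$. The target space $\F_N(b\oplus -b)$ is a generalized flag manifold of $G'$, and the orbit stratification of Theorem~\ref{thm:compactif-Aut(b)} turns questions about $\Omega$ into linear-algebraic incidence conditions with the image of $\xi$.

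First I would verify that $\rho'$ is $P_1(b\oplus -b)$-Anosov with boundary map $\xi':\partial_\infty\Gamma\to\F_1(b\oplus -b)$ given by $\eta\mapsto\xi(\eta)\oplus 0$. The map $L\mapsto L\oplus 0$ is a $G$-equivariant smooth embedding $\F_1(b)\hookrightarrow\F_1(b\oplus -b)$, since $b$-isotropic lines of $V$ are automatically $(b\oplus -b)$-isotropic in $V\oplus V$; transversality and the contraction/dilation properties transfer from $\xi$ and $\rho$, because the action of $\Gamma$ via $\rho'$ preserves the summand $V\oplus 0$ and restricts there to $\rho$. This step invokes the functoriality of the Anosov condition under embeddings of reductive groups compatible with the relevant parabolics; it is the main technical point, and the place I would expect to appeal to results established elsewhere in the paper or in \cite{GGKW_anosov}.

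Next, Guichard--Wienhard's theorem \cite{Guichard_Wienhard_DoD} applied to $\rho'$ and the flag manifold $\F_N(b\oplus -b)$ produces an open subset $\Omega'$ on which $\rho'(\Gamma)$ acts properly discontinuously and cocompactly. For a $P_1$-Anosov representation acting on the maximal isotropic Grassmannian, the complement of $\Omega'$ is precisely the incidence locus
\[ \bigcup_{\eta\in\partial_\infty\Gamma}\bigl\{W\in\F_N(b\oplus -b):\xi'(\eta)\subset W\bigr\}, \]
since for a line $\xi'(\eta)$ and a maximal isotropic $W$, non-transversality $W\cap\xi'(\eta)\neq 0$ is equivalent to $\xi'(\eta)\subset W$. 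Because $\xi'(\eta)=\xi(\eta)\oplus 0$ lies in $V\oplus 0$, the condition $\xi'(\eta)\subset W$ reads $\xi(\eta)\subset W\cap(V\oplus 0)$. Running this through the decomposition of $W$ according to its $\pi$-image and comparing with the definition of $\specialK^i_\xi$ gives $\Omega'=\bigsqcup_{i=0}^n \mathcal{U}^\xi_i=\Omega$.

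It only remains to derive the compactification statement. The open orbit $\mathcal{U}_0$ is contained in $\Omega$ tautologically, since points of $\mathcal{U}_0$ satisfy $W\cap(V\oplus 0)=0$ and so vacuously contain no $\xi(\eta)$; by Theorem~\ref{thm:compactif-Aut(b)}, $\mathcal{U}_0$ is $(G\times G)$-equivariantly identified with $(G\times G)/\Diag(G)\simeq G$ in such a way that the action of $\rho(\Gamma)\times\{e\}$ becomes left multiplication by $\rho(\Gamma)$. Hence $(\rho(\Gamma)\times\{e\})\backslash\mathcal{U}_0\simeq\rho(\Gamma)\backslash G$ sits as a dense open subset of the compact quotient $(\rho(\Gamma)\times\{e\})\backslash\Omega$, yielding the desired compactification. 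When $\Gamma$ is torsion-free, the proper discrete action of $\rho(\Gamma)\times\{e\}$ on the smooth manifold $\Omega$ has trivial stabilizers, so the quotient is a smooth manifold.
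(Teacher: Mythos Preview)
Your proposal is correct and follows essentially the same route as the paper. The paper proves Theorem~\ref{thm:intro_comp_quotient} as the special case $\rho_L=\rho$, $\rho_R\equiv e$ of a more general result (Theorem~\ref{thm:compactification}), using exactly your three ingredients: the Anosov property of $\rho\oplus e:\Gamma\to\Aut_\KK(b\oplus -b)$ with boundary map $\xi\oplus 0$ (obtained via Lemma~\ref{lem:Ano-unif-domin} and Corollary~\ref{cor:boundarymap}, i.e.\ \cite[Th.\,6.3]{GGKW_anosov}, which is precisely the external input you flagged), Proposition~\ref{prop:dod_opq}.\eqref{item:dod1} to produce the cocompact domain of discontinuity, and the identification of its complement with $\bigcup_i\pi^{-1}(\specialK^i_\xi\times\F_i(-b))$ via the incidence condition $\xi(\eta)\subset W\cap(V\oplus 0)$.
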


Similarly, we compactify quotients of $G = \Aut_{\KK}(b)$ by a word hyperbolic group $\Gamma$ when the action is given by any $P_1(b\oplus -b)$-Anosov representation
\[ \rho : \Gamma \longrightarrow \Aut_{\KK}(b) \times \Aut_{\KK}(b) \subset \Aut_{\KK}(b\oplus -b), \]
\ie we allow $\Gamma$ to act simultaneously by left and right multiplication instead of just left multiplication: this is the object of Theorem~\ref{thm:compactification}.
When $G = \Aut_{\KK}(b)$ has real rank~$1$, all properly discontinuous actions on $(G\times G)/\Diag(G)$ via quasi-isometric embeddings are of this form \cite[Th.\,6.5]{GGKW_anosov}; the case $G = \OO(p,1)$ for $p=2$ or~$3$ is particularly interesting from a geometric point of view (Remark~\ref{rem:proper-implies-Ano-rk1}).
  
\begin{remark} \label{rem:not-smooth}
Let $G$ be an arbitrary real reductive Lie group and $P$ a parabolic subgroup.
Composing a $P$-Anosov representation $\rho:\Gamma \to G$ with an appropriate linear representation $\tau: G \to \Aut_\KK(b)$, we obtain a $P_1(b)$-Anosov representation $\tau \circ \rho: \Gamma \to \Aut_\KK(b)$.
Theorem~\ref{thm:intro_comp_quotient} can then be applied to give a compactification of $\rho(\Gamma)\backslash G$: see Corollary~\ref{cor:general_comp} for a precise statement.
\end{remark}

\subsection{Compactifying pseudo-Riemannian analogues of hyperbolic spaces and their Clifford--Klein forms} \label{subsec:compactif-Hpq}

The idea of embedding a group $G$ into a larger group $G'$ such that the homogeneous spaces $G/H$ can be realized explicitly as a $G$-orbit in an appropriate flag variety $G'/Q'$, can be applied in other cases as well.
For instance, for $p>q\geq 0$ and $\KK=\RR$, $\CC$, or~$\bH$, consider
\begin{align*}
G/H & = \Aut_{\KK}(b_{\KK}^{p,q+1})/\Aut_{\KK}(b_{\KK}^{p,q})\\
& \simeq\, \hat{\HH}_{\KK}^{p,q} : = \{ x\in\KK^{p+q+1} ~|~ b_{\KK}^{p,q+1}(x,x) = -1\},
\end{align*}
where we set
\begin{equation} \label{eqn:bpq}
b_{\KK}^{p,q}(x,x) := \overline{x}_1 x_1 + \dots + \overline{x}_p x_p - \overline{x}_{p+1} x_{p+1} - \dots - \overline{x}_{p+q} x_{p+q}.
\end{equation}
Explicitly, $\Aut_{\KK}(b_{\KK}^{p,q}) = \OO(p,q)$, $\U(p,q)$, or $\Sp(p,q)$, depending on whether $\KK=\RR$, $\CC$, or~$\bH$.
Note that $\hat{\HH}_{\KK}^{p,q}$ is homeomorphic to $\KK^{p}\times\mathbb{S}_{\KK}^q$, where \( \mathbb{S}_{\KK}^q = \{ z\in\KK^{q+1} \mid b_{\KK}^{q+1,0}(z,z)=1\}\) is itself homeomorphic to the real sphere $\mathbb{S}_{\RR}^{(q+1)\dim_{\RR}(\KK)-1}$.
The quotient $\HH_{\RR}^{p,q}$ of $\hat{\HH}_{\RR}^{p,q}$ by the involution $x\mapsto -x$ is a pseudo-Riemannian analogue of the usual real hyperbolic space $\HH_{\RR}^n$: it is pseudo-Riemannian of signature $(p,q)$ and has constant negative curvature.
Similarly, the quotient $\HH_{\CC}^{p,q}$ of $\hat{\HH}_{\CC}^{p,q}$ by $\U(1) = \{ z\in\CC ~|~ \overline{z}z=1\}$ and the quotient $\HH_{\bH}^{p,q}$ of $\hat{\HH}_{\bH}^{p,q}$ by $\Sp(1) = \{ z\in\bH ~|~ \overline{z}z=1\}$ are analogues of the usual complex and quaternionic hyperbolic spaces.
Let $P_{q+1}(b_{\KK}^{p,q+1})$ be the stabilizer in $\Aut_{\KK}(b_{\KK}^{p,q+1})$ of a maximal $b_{\KK}^{p, q+1}$-isotropic subspace of~$\KK^{p+q+1}$.
We prove the following.

\begin{theorem}\label{thm:Hpq}
Let $G = \Aut_{\KK}(b_{\KK}^{p,q+1})$ where $\KK=\RR$, $\CC$, or~$\bH$ and $p>q\geq 0$.
\begin{enumerate}
  \item \label{item:compactif-Hpq} The space $\F_1(b_{\KK}^{p+1,q+1})$ is a smooth compactification of $\hat{\HH}_{\KK}^{p,q}$, which we denote by $\overhatHpq$. 
It is the union of two $G$-orbits: an open one isomorphic to $\hat{\HH}_{\KK}^{p,q}$ and a closed one equal to $\F_1(b_{\KK}^{p,q+1})$, which we denote~by~$\partial \hat{\HH}_{\KK}^{p,q}$.
\end{enumerate}
Let $\Gamma$ be a word hyperbolic group and $\rho : \Gamma \to G$ a $P_{q+1}(b_{\KK}^{p,q+1})$-Anosov representation.
\begin{enumerate}
\setcounter{enumi}{1}
  \item \label{item:proper-Hpq} The action of $\Gamma$ on $\hat{\HH}_{\KK}^{p,q}$ via~$\rho$ is properly discontinuous, except possibly if $\KK=\RR$ and $p=q+1$.
  \item \label{item:compactif-quotient-Hpq} Assume that the action is properly discontinuous.
  Let $\xi: \partial_{\infty}\Gamma \to\nolinebreak \F_{q+1}(b_{\KK}^{p, q+1})$ be the boundary map of~$\rho$ and $\specialK_\xi$ the subset of $\partial \hat{\HH}_{\KK}^{p,q} = \F_1(b_{\KK}^{p,q+1})$ consisting of lines $\ell$ contained in $\xi(\eta)$ for some $\eta \in \partial_{\infty}\Gamma$.
Then $\Gamma$ acts properly discontinuously, via~$\rho$, on $\overhatHpq \smallsetminus \specialK_\xi$, and the quotient $\rho(\Gamma) \backslash (\overhatHpq \smallsetminus \specialK_\xi)$ is compact.
In particular, if $\Gamma$ is torsion-free, then $\rho(\Gamma) \backslash (\overhatHpq \smallsetminus \specialK_\xi)$ is a smooth compactification of $\rho(\Gamma) \backslash \hat{\HH}_{\KK}^{p,q}$.
\end{enumerate}
\end{theorem}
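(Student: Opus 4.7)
The plan is to address the three parts in order, using an explicit embedding for \eqref{item:compactif-Hpq} and then transferring the Anosov representation to the larger group $G' = \Aut_\KK(b_\KK^{p+1,q+1})$ in order to apply the Guichard--Wienhard domain-of-discontinuity construction for \eqref{item:proper-Hpq} and \eqref{item:compactif-quotient-Hpq}.

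For \eqref{item:compactif-Hpq}, I would decompose $\KK^{p+q+2} = \KK e_0 \oplus \KK^{p+q+1}$ and take $b_\KK^{p+1,q+1}$ to be the orthogonal sum of the positive form $y \mapsto \overline{y}y$ on $\KK e_0$ and $b_\KK^{p,q+1}$ on the second summand. Then $(1,x)$ is $b_\KK^{p+1,q+1}$-isotropic precisely when $b_\KK^{p,q+1}(x,x) = -1$, \ie when $x \in \hat{\HH}_\KK^{p,q}$. The map $x \mapsto \KK \cdot (1,x)$ is a smooth embedding onto the open subset of $\F_1(b_\KK^{p+1,q+1})$ where the $\KK e_0$-coordinate does not vanish (each such line admits a unique representative with first coordinate equal to~$1$, even for $\KK = \bH$, via left multiplication by the inverse of that coordinate). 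Its complement consists of isotropic lines contained in $\{0\} \oplus \KK^{p+q+1}$, \ie precisely $\F_1(b_\KK^{p,q+1}) = \partial\hat{\HH}_\KK^{p,q}$. Embedding $G$ into $\Aut_\KK(b_\KK^{p+1,q+1})$ by letting it act trivially on $\KK e_0$, Witt's theorem guarantees that both the open and closed pieces are single $G$-orbits; a dimension count then confirms that the embedding is an open inclusion.

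For \eqref{item:compactif-quotient-Hpq}, the strategy is to push $\rho$ into $G'$ and invoke \cite{Guichard_Wienhard_DoD}. Since $p > q$, a maximal $b_\KK^{p,q+1}$-isotropic subspace of $\KK^{p+q+1}$ has dimension $q+1$, equal to the Witt index of $b_\KK^{p+1,q+1}$, so the inclusion $\iota : G \hookrightarrow G'$ induces a $G$-equivariant embedding $\iota_* : \F_{q+1}(b_\KK^{p,q+1}) \hookrightarrow \F_{q+1}(b_\KK^{p+1,q+1})$. By functoriality of the Anosov condition, $\iota \circ \rho : \Gamma \to G'$ is $P_{q+1}(b_\KK^{p+1,q+1})$-Anosov with boundary map $\iota_* \circ \xi$. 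Applying \cite{Guichard_Wienhard_DoD} to this representation on the flag variety $\F_1(b_\KK^{p+1,q+1}) = \overhatHpq$ produces a cocompact domain of discontinuity obtained by removing a ``bad set'' of lines $\ell$ which are in some non-generic Bruhat position with respect to $\iota_* \xi(\eta)$ for some $\eta \in \partial_\infty \Gamma$. I would then identify this bad set with $\specialK_\xi$: since $\iota_* \xi(\eta) \subset \{0\} \oplus \KK^{p+q+1}$, any line contained in it automatically lies in the closed $G$-orbit $\F_1(b_\KK^{p,q+1})$, and conversely $\specialK_\xi$ consists exactly of such lines by definition.

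Part \eqref{item:proper-Hpq} then follows immediately, outside the excepted case, from \eqref{item:compactif-quotient-Hpq}: because $\hat{\HH}_\KK^{p,q}$ is the open $G$-orbit of $\overhatHpq$, it is disjoint from the closed orbit and hence from $\specialK_\xi$, so the $\Gamma$-action on it inherits proper discontinuity by restriction, and the quotient $\rho(\Gamma) \backslash (\overhatHpq \smallsetminus \specialK_\xi)$ provides the sought compactification. The main obstacle I anticipate is the double verification that (a) the composition $\iota \circ \rho$ really satisfies the Anosov condition in $G'$ with boundary map $\iota_* \circ \xi$, and (b) the Guichard--Wienhard bad set for $\iota\circ\rho$ in $\F_1(b_\KK^{p+1,q+1})$ reduces exactly to $\specialK_\xi$, with no bad points inside the open orbit (so that $\hat{\HH}_\KK^{p,q}$ is genuinely contained in the cocompact domain). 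Both require a careful choice of Weyl-theoretic data relating the parabolics $P_{q+1}$ and $P_1$ of $G'$. The exception $\KK = \RR$, $p = q+1$ corresponds exactly to the degenerate situation where $\iota_*$ is a map of flag varieties of the same dimension rather than strictly intermediate, and would need a separate direct argument.
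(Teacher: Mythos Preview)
Your approach is essentially the same as the paper's: embed $G$ into $G' = \Aut_\KK(b_\KK^{p+1,q+1})$, identify the open $G$-orbit in $\F_1(b_\KK^{p+1,q+1})$ with $\hat{\HH}_\KK^{p,q}$, verify that $\iota\circ\rho$ is $P_{q+1}(b_\KK^{p+1,q+1})$-Anosov with boundary map $\iota_*\circ\xi$, and apply Proposition~\ref{prop:dod_opq}.\eqref{item:dod2}. The paper carries out your obstacle~(a) by a case-by-case check on restricted root systems ($B_{q+1}$, $(BC)_{q+1}$, or $C_{q+1}$ for $G$ versus $B_{q+1}$ or $(BC)_{q+1}$ for $G'$), confirming that $\alpha_{q+1}(b_\KK^{p+1,q+1})$ restricts to a positive multiple of $\alpha_{q+1}(b_\KK^{p,q+1})$ on the Cartan subspace of~$G$; obstacle~(b) is immediate since $\iota_*\xi(\eta)$ lives in $\{0\}\oplus\KK^{p+q+1}$.

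One point deserves correction. Your diagnosis of the exceptional case $\KK=\RR$, $p=q+1$ as a dimension coincidence between flag varieties is not the operative issue. The real obstruction is that $G=\OO(q+1,q+1)$ has restricted root system of type~$D_{q+1}$ while $G'=\OO(q+2,q+1)$ has type~$B_{q+1}$, and the restriction of $\alpha_{q+1}(b_\RR^{q+2,q+1})$ to $\aaa$ is $\tfrac12\bigl(\alpha_{q+1}(b_\RR^{q+1,q+1}) - \alpha_q(b_\RR^{q+1,q+1})\bigr)$, which need not diverge under a $P_{q+1}$-Anosov representation of~$G$. The paper's ``separate direct argument'' for part~\eqref{item:compactif-quotient-Hpq} in this case is precisely to invoke the hypothesis that the $\Gamma$-action on $\hat{\HH}_\RR^{p,q}$ is properly discontinuous: by the Benoist--Kobayashi properness criterion this forces $\bigl(\alpha_{q+1}-\alpha_q\bigr)(\mu(\rho(\gamma)))\to+\infty$, which is exactly the missing divergence needed to make $\iota\circ\rho$ Anosov in~$G'$. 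You should make this explicit rather than leave it as an unspecified separate argument.
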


Suppose $\KK=\RR$ and $q=0$.
Then Theorem~\ref{thm:Hpq}.\eqref{item:compactif-Hpq} describes the usual compactification of the disjoint union of two copies of the real hyperbolic space~$\HH_{\RR}^p$, obtained by embedding them as two open hemispheres into the visual boundary $\partial\HH_{\RR}^{p+1} = \F_1(b_{\RR}^{p+1,1}) \simeq \mathbb{S}_{\RR}^p$ of $\HH_{\RR}^{p+1}$.
A representation $\rho : \Gamma\to\OO(p,1)$ is $P_1(b_{\RR}^{p,1})$-Anosov if and only if it is convex cocompact, in which case Theorem~\ref{thm:Hpq}.\eqref{item:proper-Hpq} states that $\Gamma$ acts properly discontinuously, via~$\rho$, on the complement in $\partial\HH_{\RR}^{p+1}$ of the limit set $\specialK_\xi$ of $\rho$ in $\partial\HH_{\RR}^p$.
When $\rho(\Gamma)\subset\SO(p,1)$, Theorem~\ref{thm:Hpq}.\eqref{item:compactif-quotient-Hpq} describes the compactification of two copies of the convex cocompact hyperbolic manifold $\rho(\Gamma)\backslash\HH_{\RR}^p$ obtained by gluing them along their common boundary.

For $\KK=\RR$ and $q=1$ (Lorentzian case), Theorem~\ref{thm:Hpq}.\eqref{item:compactif-Hpq} describes the usual compactification of the double cover of the anti-de Sitter space $\mathrm{AdS}^{p+1}$, obtained by embedding it into the Einstein universe $\mathrm{Ein}^{p+1}$.

In general, the compactification $\overhatHpq = \F_1(b_{\KK}^{p+1,q+1})$ is homeomorphic to 
\[ (\mathbb{S}_{\KK}^p\times\mathbb{S}_{\KK}^q)/\{ z\in\KK \mid \overline{z}z=1\} .\]

\begin{remark} \label{rem:O(p,p)}
For $\KK=\RR$ and $p=q+1$, the fact that $\rho$ is $P_{q+1}(b_{\KK}^{p,q+1})$-Anosov does not imply that the action of $\Gamma$ on $\hat{\HH}_{\KK}^{p,q}$ is properly discontinuous: see Example~\ref{ex:O(2,2)}.
In the case that $\KK=\RR$ and $p=q+1$ is odd, the action of $\Gamma$ on $\hat{\HH}_{\KK}^{p,q}$ can actually never be properly discontinuous unless $\Gamma$ is virtually cyclic, by \cite{Kassel_corank1}.
\end{remark}

\subsection{Compactifying more families of homogeneous spaces and their Clifford--Klein forms}

As a generalization of Theorem~\ref{thm:Hpq}, we prove the following.

\begin{proposition} \label{prop:compactification_general}
Let $(G,H,P,G',P')$ be as in Table~\ref{table2}.
\begin{enumerate}
  \item \label{item:compactif_general_G/H} There exists an open $G$-orbit $\mathcal{U}$ in $G'/P'$ that is diffeomorphic to $G/H$; the closure $\overline{\mathcal{U}}$ of $\mathcal{U}$ in $G'/P'$ provides a compactification of $G/H$.
  \item For any word hyperbolic group~$\Gamma$ and any $P$-Anosov representation $\rho: \Gamma \to G$, the cocompact domain of discontinuity $\Omega\subset G'/P'$ for $\rho(\Gamma)$ constructed in \cite{Guichard_Wienhard_DoD} (see Proposition~\ref{prop:dod_opq}) contains~$\mathcal{U}$; the quotient $\rho(\Gamma)\backslash (\Omega\cap\overline{\mathcal{U}})$ provides a compactification of $\rho(\Gamma)\backslash G/H$.
\end{enumerate}
\end{proposition}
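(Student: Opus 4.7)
The plan is to proceed in two steps, closely paralleling the proofs of Theorems~\ref{thm:compactif-Aut(b)} and~\ref{thm:Hpq}.\eqref{item:compactif-Hpq}.

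For part~\eqref{item:compactif_general_G/H}, I would work row by row through Table~\ref{table2}, exhibiting in each case a distinguished point $x_0 \in G'/P'$ (typically a flag built from an auxiliary structure such as a complex structure $J$ or a quaternionic structure compatible with the bilinear form defining~$G$) and computing its stabilizer in~$G$, checking that it coincides with~$H$. A dimension count $\dim_{\RR}(G/H) = \dim_{\RR}(G'/P')$ then shows via the orbit-stabilizer theorem that $\mathcal{U} := G \cdot x_0$ is open in $G'/P'$ and diffeomorphic to $G/H$. Since $G'/P'$ is a compact flag variety of~$G'$, the closure $\overline{\mathcal{U}}$ is automatically a compactification. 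In the symmetric case such as $\OO(2p,2q)/\U(p,q)$ or $\U(2p,2q)/\Sp(p,q)$, the point $x_0$ can be described very concretely through the $\pm i$-eigenspaces of the compatible structure, as was done for group manifolds in Theorem~\ref{thm:compactif-Aut(b)}.

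For part~(2), the crucial geometric input is the inclusion $\mathcal{U} \subset \Omega$. The domain $\Omega$ of \cite{Guichard_Wienhard_DoD} is obtained from $G'/P'$ by removing a union of Schubert-type subvarieties attached to the flags $\xi(\eta)$ for $\eta\in\partial_\infty\Gamma$, where $\xi:\partial_\infty\Gamma\to G/P$ is the boundary map of~$\rho$. I would verify, using the explicit description of $x_0$ from part~\eqref{item:compactif_general_G/H} together with the incidence conditions defining $\Omega$ recalled in Proposition~\ref{prop:dod_opq}, that every point of the open $G$-orbit $\mathcal{U}$ is automatically transverse to every $P$-flag in $G/P$, and hence lies outside the bad locus. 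This is a $G$-invariant statement, so it is enough to check it for the single point~$x_0$, reducing the task to a transversality computation involving the auxiliary structure chosen in part~\eqref{item:compactif_general_G/H}.

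Once $\mathcal{U}\subset\Omega$ is established, the set $\Omega \cap \overline{\mathcal{U}}$ is $\rho(\Gamma)$-invariant (both $\Omega$ and $\overline{\mathcal{U}}$ being $\rho(\Gamma)$-invariant), open in $\overline{\mathcal{U}}$, and closed inside~$\Omega$. Proper discontinuity of $\rho(\Gamma)$ on $\Omega\cap\overline{\mathcal{U}}$ is inherited from $\Omega$, and cocompactness follows because $\rho(\Gamma)\backslash(\Omega\cap\overline{\mathcal{U}})$ is a closed subspace of the compact orbifold $\rho(\Gamma)\backslash\Omega$. Since this quotient contains the open dense subset $\rho(\Gamma)\backslash\mathcal{U}\simeq\rho(\Gamma)\backslash G/H$, it is the desired compactification.

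The main obstacle is the case-by-case verification of the transversality $\mathcal{U}\subset\Omega$, which depends on the precise entries of Table~\ref{table2} and the specific form of the incidence conditions in Proposition~\ref{prop:dod_opq}; part~\eqref{item:compactif_general_G/H} is comparatively routine once the correct distinguished point~$x_0$ is identified in each row.
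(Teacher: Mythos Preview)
Your outline matches the paper's strategy: exhibit an explicit basepoint $x_0\in G'/P'$ with $\mathrm{Stab}_G(x_0)=H$ to prove part~\eqref{item:compactif_general_G/H}, then establish $\mathcal{U}\subset\Omega$ by a transversality check. The paper does exactly this, row by row, and your idea of verifying the incidence condition of Proposition~\ref{prop:dod_opq} against \emph{all} $P$-flags (not merely those in $\xi(\partial_\infty\Gamma)$) is what the paper does as well: in cases (i)--(iii) one observes $\specialK_{\xi'}\subset\F_1(b_{\KK}^{p,q+1})=G'/P'\smallsetminus\mathcal{U}$, and in cases (iv)--(vi) one observes that $\mathcal{U}$ lies in the $G$-invariant open set of subspaces $W'$ with $W'\cap V=\{0\}$, where $V$ is the ``real'' subspace in which the boundary map lands.

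There is, however, one step you have genuinely omitted. The domain $\Omega\subset G'/P'$ of Proposition~\ref{prop:dod_opq} is defined for an Anosov representation into~$G'$, not into~$G$. Before you can invoke it, you must verify that the composition $\rho':\Gamma\to G\hookrightarrow G'$ is itself Anosov with respect to the appropriate parabolic of~$G'$, and that its boundary map $\xi':\partial_\infty\Gamma\to G'/P'_1$ (or $G'/P'_{q+1}$) is the composition of $\xi$ with the natural inclusion of the relevant $G$-flag variety into the $G'$-flag variety. Your sentence ``attached to the flags $\xi(\eta)$\dots where $\xi:\partial_\infty\Gamma\to G/P$'' presupposes this identification without arguing it. In the paper this is done by comparing the restricted root systems of $G$ and~$G'$ and appealing to Definition~\ref{def:Anosov}: for instance, in cases (i)--(iii) one checks that $\alpha_{q+1}(b_{\KK}^{p+1,q+1})$ restricts on the Cartan subspace of~$G$ to a positive multiple of $\alpha_{q+1}(b_{\KK}^{p,q+1})$, so that \eqref{eqn:alpha_i-diverge} transfers. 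Without this step $\Omega$ is not even defined, and your transversality computation has nothing to plug into.
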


\begin{table}[h!]
\centering
\begin{adjustwidth}{-1.2in}{-1.2in}
\centering
\begin{tabular}{|c|c|c|c|c|c|}
\hline
& $G$ & $H$ & $P$ & $G'$ & $P'$\tabularnewline
\hline
(i) & $\OO(p,q+1)$ & $\OO(p,q)$ & $\mathrm{Stab}_G(W)$ & $\OO(p+1,q+1)$ & $\mathrm{Stab}_{G'}(\ell')$ \tabularnewline
(ii) & $\U(p,q+1)$ & $\U(p,q)$ & $\mathrm{Stab}_G(W)$ & $\U(p+1,q+1)$ & $\mathrm{Stab}_{G'}(\ell')$\tabularnewline
(iii) & $\Sp(p,q+1)$ & $\Sp(p,q)$ & $\mathrm{Stab}_G(W)$ & $\Sp(p+1,q+1)$ & $\mathrm{Stab}_{G'}(\ell')$\tabularnewline
(iv) & $\OO(2p,2q)$ & $\U(p,q)$ & $\mathrm{Stab}_G(\ell)$ & $\OO(2p+2q,\CC)$ & $ \mathrm{Stab}_{G'}(W')$\tabularnewline
(v) & $\U(2p,2q)$ & $\Sp(p,q)$ & $\mathrm{Stab}_G(\ell)$ & $\Sp(p+q,p+q)$ &$\mathrm{Stab}_{G'}(W')$\tabularnewline
(vi) & $\Sp(2m,\RR)$ & $\U(p,m-p)$ & $\mathrm{Stab}_G(\ell)$ & $\Sp(2m,\CC)$ & $ \mathrm{Stab}_{G'}(W')$\tabularnewline
\hline
\end{tabular}
\end{adjustwidth}
\vspace{0.2cm}
\caption{Reductive groups $H\subset G\subset G'$ and parabolic subgroups $P$ of~$G$ and $P'$ of~$G'$ to which Proposition~\ref{prop:compactification_general} applies.
Here $m,p,q$ are any integers with $m>0$; we require $p>q+1$ in case~(i) and $p>q$ in cases (ii), (iii), as well as $q>0$ in cases (iv), (v).
We denote by $\ell$ or~$\ell'$ an isotropic line and by $W$ or~$W'$ a maximal isotropic subspace (over $\RR$, $\CC$, or~$\bH$), relative to the form $b$ preserved by $G$ or~$G'$.}
\label{table2}
\end{table}

In all examples of Table~\ref{table2}, the space $G/H$ is an affine symmetric space.
Examples (i), (ii), (iii) correspond to the situation of Section~\ref{subsec:compactif-Hpq}.
In examples (i), (ii), (iii) with $q=0$ and example~(vi) with~$p=0$, the symmetric space $G/H$ is Riemannian. 
Example~(i), example~(iv) with $q =1$, and example~(vi) with $p=0$ were previously described in \cite{Guichard_Wienhard_DoD}.
The open $G$-orbit $\mathcal{U}$ diffeomorphic to $G/H$ is given explicitly in each case in Section~\ref{sec:proof-comp-general}.

\begin{remark} \label{rem:lift-DoD}
The cocompact domains of discontinuity we describe in $G'/P'$ lift to cocompact domains of discontinuity in $G'/P''$ for any parabolic subgroup $P''$ of~$G'$ contained in~$P'$; in particular, they lift to cocompact domains of discontinuity in $G'/P'_{\mathrm{min}}$ where $P'_{\mathrm{min}}$ is a minimal parabolic subgroup of~$G'$.
The compactifications we describe for quotients $\rho(\Gamma)\backslash G/H$ induce compactifications of the quotients $\rho(\Gamma)\backslash\mathcal{U}'$ for any $G$-orbit $\mathcal{U}'$ in $G'/P''$ lifting the \(G\)-orbit \(\mathcal{U} \subset G'/P'\) diffeomorphic~to~$G/H$. 
\end{remark}

In Proposition~\ref{prop:compactification_general_nonsym} below, we also treat homogeneous spaces that are not affine symmetric spaces using Remark~\ref{rem:lift-DoD}.

\subsection{Tameness}

We establish the topological tameness of the Clifford--Klein forms $\rho(\Gamma)\backslash G/H$ above.
Recall that a manifold is said to be \emph{topologically tame} if it is homeomorphic to the interior of a compact manifold with boundary.

In the setting of Theorem~\ref{thm:Hpq}, both $\rho(\Gamma) \backslash (\hat{\HH}_{\KK}^{p,q} \cup \mathcal{C}_\xi)$ and $\rho(\Gamma) \backslash \mathcal{C}_\xi$ are smooth compact manifolds, without boundary.
The complement, inside a compact manifold, of a compact submanifold is topologically tame.
Therefore, Theorem~\ref{thm:Hpq} immediately yields the following.

\begin{corollary}\label{cor:tame-Hpq}
Let $\Gamma$ be a torsion-free word hyperbolic group, $p>q\geq 0$ two integers.
For any $P_{q+1}(b_{\KK}^{p,q+1})$-Anosov representation $\rho : \Gamma \to \Aut_{\KK}(b_{\KK}^{p,q+1})$, if the action of $\Gamma$ on \(\hat{\HH}_{\KK}^{p,q}\) via~$\rho$ is properly discontinuous (see Theorem~\ref{thm:Hpq}.\eqref{item:proper-Hpq}), then the manifold $\rho(\Gamma) \backslash \hat{\HH}_{\KK}^{p,q}$ is topologically tame.
\end{corollary}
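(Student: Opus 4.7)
The plan is to unpack the observation sketched just above the statement: $\rho(\Gamma)\backslash\hat{\HH}_{\KK}^{p,q}$ embeds in a smooth compact manifold as the complement of a closed smooth submanifold, and such a complement is always topologically tame. Set $\mathcal{C}_\xi := \F_1(b_{\KK}^{p,q+1})\setminus\specialK_\xi$ for the ``open part of the boundary''; then Theorem~\ref{thm:Hpq}.\eqref{item:compactif-quotient-Hpq} asserts a properly discontinuous action of $\Gamma$ on
\[ \overhatHpq\setminus\specialK_\xi \;=\; \hat{\HH}_{\KK}^{p,q}\,\sqcup\,\mathcal{C}_\xi, \]
which is free because $\Gamma$ is torsion-free, and whose quotient
\[ M \;:=\; \rho(\Gamma)\backslash\bigl(\overhatHpq\setminus\specialK_\xi\bigr) \;=\; \bigl(\rho(\Gamma)\backslash\hat{\HH}_{\KK}^{p,q}\bigr)\,\sqcup\,\bigl(\rho(\Gamma)\backslash\mathcal{C}_\xi\bigr) \]
is therefore a smooth compact manifold.

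The first step is to verify that $N := \rho(\Gamma)\backslash\mathcal{C}_\xi$ is a compact smooth submanifold of $M$, without boundary. Upstairs, the closed $G$-orbit $\partial\hat{\HH}_{\KK}^{p,q} = \F_1(b_{\KK}^{p,q+1})$ is a smooth submanifold of $\overhatHpq = \F_1(b_{\KK}^{p+1,q+1})$ (being the closed orbit of a smooth group action), and $\mathcal{C}_\xi$ is a $\Gamma$-invariant open subset of it. Because the quotient map $\overhatHpq\setminus\specialK_\xi\to M$ is a local diffeomorphism, $N$ inherits the structure of a smooth submanifold of $M$; it is closed in $M$ (as the complement of the open set $\rho(\Gamma)\backslash\hat{\HH}_{\KK}^{p,q}$), and compact because $M$ is. Its codimension in $M$ is positive, since $\hat{\HH}_{\KK}^{p,q}$ is already open in $\overhatHpq$.

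The second step reduces to a standard piece of differential topology. Applying the tubular neighborhood theorem to the closed smooth submanifold $N$ of the compact smooth manifold $M$, choose a closed tubular neighborhood $\overline{\nu}\subset M$ diffeomorphic to the closed unit disk bundle of the normal bundle of $N$. Then $W := M\setminus\mathrm{int}(\overline{\nu})$ is a smooth compact manifold with boundary $\partial W$ equal to the associated unit sphere bundle, and the radial retraction of $\overline{\nu}\setminus N$ onto $\partial\overline{\nu}$ yields a homeomorphism between the interior of $W$ and $M\setminus N = \rho(\Gamma)\backslash\hat{\HH}_{\KK}^{p,q}$. This realizes $\rho(\Gamma)\backslash\hat{\HH}_{\KK}^{p,q}$ as the interior of the compact manifold with boundary $W$, which is the definition of topological tameness.

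There is no genuine obstacle beyond checking the smooth set-up. Once Theorem~\ref{thm:Hpq} is granted, the only facts one needs are freeness of the action (immediate from torsion-freeness of $\Gamma$) and that $N$ is a closed smooth submanifold of positive codimension in $M$; after that, tameness of the complement of a compact submanifold in a compact manifold is entirely classical, and no further input from the Anosov hypothesis is required.
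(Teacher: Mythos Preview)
Your proof is correct and follows exactly the approach the paper sketches just before the corollary: both $\rho(\Gamma)\backslash(\hat{\HH}_{\KK}^{p,q}\cup\mathcal{C}_\xi)$ and $\rho(\Gamma)\backslash\mathcal{C}_\xi$ are smooth compact manifolds (the latter a closed submanifold of the former), and the complement of a compact submanifold inside a compact manifold is topologically tame. You have simply spelled out the tubular-neighborhood argument that the paper leaves implicit in that last sentence.
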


In order to prove topological tameness in more general cases, we establish the following useful result.

\begin{lemma} \label{lem:tameness_intro}
Let $G\subset G'$ be two real reductive algebraic groups and $\Gamma$ a torsion-free discrete subgroup of~$G$. 
Let $X$ be a $G'$-homogeneous space and $\Omega$ an open subset of~$X$ on which $\Gamma$ acts properly discontinuously and cocompactly.
For any $G$-orbit $\mathcal{U}\subset \Omega$, the quotient $\Gamma \backslash \mathcal{U}$ is a topologically tame manifold.
\end{lemma}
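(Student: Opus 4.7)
The plan is to realize $\Gamma\backslash\mathcal{U}$ as the interior of a compact manifold with boundary, obtained by excising a collar neighborhood of the frontier orbits inside the compact quotient $\Gamma\backslash\Omega$. Since $\Gamma$ is torsion-free and acts properly discontinuously and cocompactly on the smooth manifold~$\Omega$, the action is in fact free, so $M := \Gamma\backslash\Omega$ is a compact smooth manifold. Let $V := \Gamma\backslash\mathcal{U}$ be the image of~$\mathcal{U}$, an open submanifold of~$M$, and set $C := \overline{V}\setminus V$; then $C$ is compact, equal to the image of the $\Gamma$-invariant closed subset $\mathcal{Z} := (\overline{\mathcal{U}}\cap\Omega)\setminus\mathcal{U}$ of~$\Omega$, which is a union of $G$-orbits of strictly lower dimension than~$\mathcal{U}$.

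The key input is the semialgebraic structure of the data. Since $G\subset G'$ are real algebraic groups acting algebraically on the real algebraic $G'$-variety~$X$, the orbit $\mathcal{U}$ is a locally closed semialgebraic submanifold of~$X$, and its frontier $\mathcal{Z}$ is semialgebraic of strictly smaller dimension. In particular, the pair $(\Omega,\mathcal{Z})$ is semialgebraic, and the pair $(M,C)$ is locally semialgebraic, in the sense that every point of $M$ admits a chart pulled back from $\Omega\subset X$ in which $C$ corresponds to a closed semialgebraic set.

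Invoking \L{}ojasiewicz's semialgebraic triangulation theorem (or, equivalently, Thom--Mather theory of Whitney stratifications applied to the stratification of $\mathcal{Z}$ by $G$-orbits), I would construct a $\Gamma$-invariant continuous function $f : \Omega\to[0,\infty)$ which is smooth on~$\mathcal{U}$, satisfies $f^{-1}(0) = \mathcal{Z}$, and admits an $\epsilon_0 > 0$ such that every value in $(0,\epsilon_0]$ is a regular value of $f|_{\mathcal{U}}$. Descending to~$M$ and fixing $\epsilon \in (0,\epsilon_0]$, the superlevel set $\{f \geq \epsilon\}$ is a compact smooth manifold with boundary $\{f = \epsilon\}$ sitting inside~$V$. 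The \L{}ojasiewicz inequality lets us pick a gradient-like vector field for~$f$ whose flow trivialises $\{f \leq \epsilon\}\cap V$ as a collar $\{f = \epsilon\} \times (0,1]$; reparametrising this collar produces a diffeomorphism between $V$ and the interior $\{f > \epsilon\}$ of the compact manifold-with-boundary $\{f \geq \epsilon\}$, exhibiting $V = \Gamma\backslash\mathcal{U}$ as topologically tame.

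The main technical obstacle is the construction of~$f$ with the required regularity property near~$C$: since $C$ may be genuinely singular in~$M$, a Riemannian distance function to~$C$ is only Lipschitz, and applying Sard's theorem to a smooth approximation only yields a dense set of regular values, which could a priori fail to fill an entire interval near~$0$. Semialgebraicity of~$\mathcal{Z}$, inherited from the algebraic structure of the $G$-action on~$X$, is precisely what rules out such pathological accumulation and supplies, via \L{}ojasiewicz-type arguments, the desired interval of regular values of~$f|_{\mathcal{U}}$ approaching~$0$.
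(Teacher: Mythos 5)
Your overall strategy --- exploit the semialgebraicity of the $G$-orbit $\mathcal{U}$ and of its frontier, and realize $\Gamma\backslash\mathcal{U}$ as the complement of a controlled neighborhood of $\Gamma\backslash(\partial\mathcal{U}\cap\Omega)$ inside the compact set $\Gamma\backslash(\overline{\mathcal{U}}\cap\Omega)$ --- is exactly the one the paper takes; the paper just implements the excision simplicially (semialgebraic triangulation, then a regular neighborhood of a subcomplex) rather than via a level set of a function. The gap in your write-up sits precisely at the point you flag as the main obstacle. Neither $\Omega$ nor $M=\Gamma\backslash\Omega$ is semialgebraic: the lemma assumes nothing about $\Omega$ beyond openness, in the applications $X\smallsetminus\Omega$ is built from the image of a boundary map $\xi$ (a fractal limit set), and a quotient by an infinite discrete group carries no global semialgebraic structure in any case. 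So you cannot apply the semialgebraic triangulation theorem to $(M,C)$, nor produce a globally defined semialgebraic (or subanalytic) function $f$ on $\Omega$ or on $M$ to which a \L{}ojasiewicz-type argument would apply; and patching local \L{}ojasiewicz functions with a partition of unity destroys exactly the structure that guarantees a whole interval of regular values. Passing from ``locally semialgebraic'' to a global conclusion is the actual content of the paper's proof: it covers $\Gamma\backslash(\overline{\mathcal{U}}\cap\Omega)$ by finitely many closed pieces $D_1,\dots,D_n$, each homeomorphic to a semialgebraic set, and inductively refines compatible triangulations of $D_1\cup\dots\cup D_k$ until $\Gamma\backslash(\partial\mathcal{U}\cap\Omega)$ is a subcomplex, after which a simplicial regular neighborhood finishes the argument.

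Your parenthetical alternative --- Thom--Mather theory --- is the one variant of your plan that avoids this globalization problem, because Whitney regularity and control data are local, diffeomorphism-invariant notions: the canonical Whitney stratification of the semialgebraic set $\overline{\mathcal{U}}$ refining the partition into $\mathcal{U}$ and $\partial\mathcal{U}$ is invariant under every algebraic homeomorphism preserving $\mathcal{U}$, hence restricts to $\Omega$ and descends to the compact quotient, where the tubular function of a Thom--Mather tube around $C$ plays the role of your $f$. If you take that route, two corrections are needed: the $G$-orbit decomposition of $\partial\mathcal{U}$ need not itself satisfy the Whitney conditions, so you must pass to a canonical (hence invariant) refinement rather than ``the stratification by $G$-orbits''; and the compact manifold with boundary must be $\{f\geq\epsilon\}\cap\overline{V}$ rather than $\{f\geq\epsilon\}\subset M$, since the $f$ you define is positive on all of $M\smallsetminus C$ and its superlevel sets are not contained in $V$.
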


Proposition~\ref{prop:compactification_general} and
Lemma~\ref{lem:tameness_intro} immediately imply the following, by taking $\mathcal{U}$ to be a $G$-orbit in $G'/Q'$ that identifies with $G/H$.

\begin{corollary}\label{cor:tame-table2}
Let $\Gamma$ be a torsion-free word hyperbolic group and let $H\subset\nolinebreak G\supset\nolinebreak P$ be as in Table~\ref{table2}.
For any $P$-Anosov representation $\rho: \Gamma \to G$, the quotient $\rho(\Gamma)\backslash G/H$ is a topologically tame manifold.
\end{corollary}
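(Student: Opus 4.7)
The plan is to combine the two results in the most direct way possible: Proposition~\ref{prop:compactification_general} supplies the geometric setup together with the cocompact domain of discontinuity, while Lemma~\ref{lem:tameness_intro} converts cocompactness on the ambient flag variety into topological tameness of a single orbit quotient.

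Concretely, I would proceed as follows. For the given row of Table~\ref{table2}, I first invoke part~\eqref{item:compactif_general_G/H} of Proposition~\ref{prop:compactification_general} to fix the overgroup $G' \supset G$, the parabolic $P' \subset G'$, and the open $G$-orbit $\mathcal{U} \subset G'/P'$ that is $G$-equivariantly diffeomorphic to $G/H$. I then apply part~(2) of the same proposition: since $\rho$ is $P$-Anosov, the Guichard--Wienhard construction produces an open subset $\Omega \subset G'/P'$ with $\mathcal{U} \subset \Omega$ on which $\rho(\Gamma)$ acts properly discontinuously and cocompactly. At this point the hypotheses of Lemma~\ref{lem:tameness_intro} are in place: taking $X := G'/P'$, the above $\Omega$, and the $G$-orbit $\mathcal{U} \subset \Omega$, the lemma yields that $\rho(\Gamma) \backslash \mathcal{U}$ is topologically tame, and transporting the diffeomorphism $\mathcal{U} \simeq G/H$ to the quotient gives tameness of $\rho(\Gamma) \backslash G/H$.

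One should briefly check that $\rho(\Gamma)$ qualifies as a torsion-free discrete subgroup of $G \subset G'$ for the application of Lemma~\ref{lem:tameness_intro}: this follows because any Anosov representation has finite kernel, so a torsion-free $\Gamma$ maps isomorphically onto $\rho(\Gamma)$, which moreover is discrete in~$G$ (indeed, $\rho$ is a quasi-isometric embedding). There is no genuine obstacle in the corollary itself; all substance lies in Proposition~\ref{prop:compactification_general} and especially in Lemma~\ref{lem:tameness_intro}, whose proof will carry the technical burden of extracting a compact manifold-with-boundary structure from the given cocompact action on the larger open set~$\Omega$.
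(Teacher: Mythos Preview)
Your proposal is correct and follows exactly the argument the paper intends: the sentence preceding the corollary already states that it is an immediate consequence of Proposition~\ref{prop:compactification_general} (supplying $\mathcal{U}\simeq G/H$ inside the cocompact domain $\Omega\subset G'/P'$) and Lemma~\ref{lem:tameness_intro}. Your additional remark that $\rho$ is injective on a torsion-free~$\Gamma$ (finite kernel forces trivial kernel) and has discrete image is a welcome clarification of why Lemma~\ref{lem:tameness_intro} applies to $\rho(\Gamma)$.
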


Using Theorem~\ref{thm:intro_comp_quotient} and Lemma~\ref{lem:tameness_intro}, we also prove the following.

\begin{theorem}\label{thm:tame-G/Gamma}
Let $\Gamma$ be a torsion-free word hyperbolic group, $G$ a real reductive Lie group, and $P$ a proper parabolic subgroup of~$G$.
For any $P$-Anosov representation $\rho : \Gamma\to G$, the quotient $\rho(\Gamma)\backslash G$ a is topologically tame manifold.
\end{theorem}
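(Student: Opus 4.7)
The plan is to reduce the case of general reductive $G$ to the case of the automorphism group of a bilinear form treated in Theorem~\ref{thm:intro_comp_quotient}, and then to deduce topological tameness via Lemma~\ref{lem:tameness_intro}.

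First I would invoke the principle mentioned in Remark~\ref{rem:not-smooth}: for any $P$-Anosov representation $\rho : \Gamma \to G$ there exists a finite-dimensional linear representation $\tau : G \to G_1 := \Aut_{\KK}(b)$, for some nondegenerate form $b$ on a $\KK$-vector space with $\KK\in\{\RR,\CC,\bH\}$, such that $\ker\tau$ is finite and such that the composition $\tau \circ \rho : \Gamma \to G_1$ is $P_1(b)$-Anosov. The existence of such a $\tau$ with finite kernel is standard in this theory: one chooses $\tau$ to be an irreducible representation of $G$ whose highest weight is adapted to the parabolic $P$ (see \cite{Guichard_Wienhard_DoD, GGKW_anosov}), and one ensures finite kernel by enlarging $\tau$ with, e.g., the adjoint representation if necessary.

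Next I would apply Theorem~\ref{thm:intro_comp_quotient} to $\tau\circ\rho$. It provides an open subset $\Omega\subset X = \F_N(b\oplus -b)$, a flag manifold of $G'' := \Aut_{\KK}(b\oplus -b)$, on which the subgroup $(\tau\circ\rho)(\Gamma) \times \{e\} \subset G_1\times G_1 \subset G''$ acts properly discontinuously and cocompactly. The open $(G_1\times G_1)$-orbit $\mathcal{U}_0\subset\Omega$ equivariantly identifies with $G_1 = (G_1\times G_1)/\Diag(G_1)$, so that the orbit of the identity coset under the subgroup $\tau(G)\times\{e\}$ is precisely $\tau(G)\subset G_1\simeq\mathcal{U}_0\subset\Omega$. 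I would then apply Lemma~\ref{lem:tameness_intro} with ambient group $G''$, subgroup $\tau(G)\times\{e\}\subset G''$, torsion-free discrete subgroup $(\tau\circ\rho)(\Gamma)\times\{e\}$, and the $(\tau(G)\times\{e\})$-orbit $\tau(G)$: this yields that $(\tau\circ\rho)(\Gamma)\backslash \tau(G)$ is a topologically tame manifold.

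Finally, since $\ker\tau$ is finite and $\rho(\Gamma)$ is torsion-free, one has $\ker\tau\cap\rho(\Gamma) = \{e\}$, so the natural map $\rho(\Gamma)\backslash G \to (\tau\circ\rho)(\Gamma)\backslash\tau(G)$ is a finite covering with deck group $\ker\tau$. Since finite covers of topologically tame manifolds are again topologically tame (one lifts the compact bordification, using that $\pi_1$ of the interior equals $\pi_1$ of the manifold-with-boundary), we conclude that $\rho(\Gamma)\backslash G$ is topologically tame. The main obstacle I expect is verifying that a linear representation $\tau$ with all of the required properties (finite kernel \emph{and} Anosov-preserving) exists uniformly for every reductive $G$ and every proper parabolic $P$; this is part of the established theory of Anosov representations but is the one nontrivial input, all the rest being a clean assembly of Theorem~\ref{thm:intro_comp_quotient} and Lemma~\ref{lem:tameness_intro} together with the elementary lifting of bordifications through finite covers.
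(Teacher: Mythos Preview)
Your overall architecture---embed $G$ into $\Aut_{\KK}(b)$ via a representation $\tau$ making $\tau\circ\rho$ $P_1(b)$-Anosov, then invoke Theorem~\ref{thm:intro_comp_quotient} and Lemma~\ref{lem:tameness_intro}---is exactly the paper's strategy (see the proof of Theorem~\ref{thm:tame-G/Gamma-LR}, of which Theorem~\ref{thm:tame-G/Gamma} is the special case $\rho_R\equiv e$).

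The substantive difference is in how you dispose of the kernel of~$\tau$. You assert that $\tau$ can be chosen with finite kernel, suggesting one ``enlarges $\tau$ with the adjoint representation if necessary''. This is precisely the point the paper does \emph{not} take for granted: it writes explicitly that ``in general $\tau$ might not have finite kernel'' and, rather than trying to upgrade $\tau$, introduces a \emph{second}, injective linear representation $\tau':G\to\GL_{\RR}(V')$ and works in the product $\Omega\times\F_{N'}(V'\oplus V')$, using the $\GL$-compactification of Theorem~\ref{thm:compactif-GL} on the second factor. The stabilizer of a suitable point $(u,u')$ for the $((\tau\oplus\tau)\times(\tau'\oplus\tau'))(G\times G)$-action is then exactly $\Diag(G)$, so Lemma~\ref{lem:tameness_intro} applies directly, with no covering argument needed.

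Your proposed fix has a genuine gap: replacing $\tau$ by a direct sum such as $\tau\oplus\Ad$ takes you out of a single $\Aut_{\KK}(b)$ and into a product, and there is no reason the composite should be $P_1$-Anosov for the direct-sum form (Proposition~\ref{prop:makeP1} requires $\tau$ irreducible with one-dimensional highest weight space). One can likely argue directly that among the infinitely many irreducible $\tau$ allowed by Proposition~\ref{prop:makeP1} there is one with finite kernel (for semisimple $G$ this is automatic; for reductive $G$ one must choose the central character carefully), but you have not done so. The paper's product trick sidesteps this entirely and is the cleaner route.
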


\begin{remark}
In the Riemannian case, compactifications of quotients of symmetric spaces have recently been constructed by a different method in \cite{KapovichLeeb15} for uniformly $\sigma_{mod}$-regular and conical discrete subgroups of~$G$; this class of discrete groups contains the images of $P_{\mathrm{min}}$-Anosov representations where $P_{\mathrm{min}}$ is a minimal parabolic subgroup of~$G$.
\end{remark}

\subsection{Organization of the paper}

In Section~\ref{sec:compactif-group} we establish Theorem~\ref{thm:compactif-Aut(b)} and its analogue for $\GL_{\KK}(V)$ (Theorem~\ref{thm:compactif-GL}).
In Section~\ref{sec:rem-Anosov} we recall the notion of Anosov representation, the construction of domains of discontinuity from \cite{Guichard_Wienhard_DoD}, and a few facts from \cite{GGKW_anosov} on Anosov representations into $\Aut_{\KK}(b)\times\Aut_{\KK}(b)$.
This allows us, in Section~\ref{sec:dod-GxG}, to establish Theorem~\ref{thm:intro_comp_quotient} and some generalization (Theorem~\ref{thm:compactification}).
In Section~\ref{sec:proof-comp-general} we prove Theorem~\ref{thm:Hpq} and Proposition~\ref{prop:compactification_general}.
Finally, Section~\ref{sec:tameness} is devoted to topological tameness, with a proof of Lemma~\ref{lem:tameness_intro} and Theorem~\ref{thm:tame-G/Gamma}.

\subsection*{Acknowledgements}

We are grateful to Jeff Danciger and Pablo Solis for useful comments and discussions.

\section{Compactification of group manifolds} \label{sec:compactif-group}

In this section we provide a short proof of Theorem~\ref{thm:compactif-Aut(b)} and of its analogue for general linear groups $\GL_{\KK}(V)$ with $\KK=\RR$, $\CC$, or~$\bH$ (Theorem~\ref{thm:compactif-GL}).

\subsection{The case $G = \Aut_{\KK}(b)$}

Let us prove Theorem~\ref{thm:compactif-Aut(b)}.
We use the notation of Section~\ref{subsec:intro-compact-G}.
In particular,
\[ \pi : \F_N(b\oplus -b) \longrightarrow \bigg(\bigcup_{i=0}^n \F_i(b)\bigg) \times \bigg(\bigcup_{i=0}^n \F_i(-b)\bigg) \]
is the map defined by \eqref{eqn:pi}.
The group
\[ \Aut_{\KK}(b) \times \Aut_{\KK}(b) = \Aut_{\KK}(b) \times \Aut_{\KK}(-b) \]
naturally embeds into $\Aut_{\KK}(b \oplus -b)$.
For $0\leq i\leq n$, the set
\[ \mathcal{U}_i := \pi^{-1}\big(\F_i(b) \times \F_i(-b)\big) \]
is clearly invariant under $\Aut_{\KK}(b) \times \Aut_{\KK}(-b)$.

\begin{lemma}\label{lem:sets-equal}
The space $X = \F_N(b\oplus -b)$ of maximal $(b\oplus -b)$-isotropic $\KK$-subspaces of $V \oplus V$ is the union of the sets~$\mathcal{U}_i$ for $0\leq i\leq n$.
\end{lemma}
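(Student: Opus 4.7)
The plan is to show that for any $W \in \F_N(b \oplus -b)$, the two intersections $W \cap (V \oplus \{0\})$ and $W \cap (\{0\} \oplus V)$ have the same $\KK$-dimension, and that this common dimension is at most $n$.

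First I would set $W_1 := W \cap (V\oplus\{0\})$ and $W_2 := W \cap (\{0\}\oplus V)$, which I view as $\KK$-subspaces of~$V$. Since $W$ is $(b\oplus -b)$-isotropic, $W_1$ is $b$-isotropic and $W_2$ is $(-b)$-isotropic; in particular $\dim_\KK W_j \leq n$ for $j=1,2$. Let $p_j : W \to V$ denote the projection onto the $j$-th factor, and set $V_j' := p_j(W)$. By construction $\ker p_1 = W_2$ and $\ker p_2 = W_1$, so the rank--nullity theorem gives $\dim_\KK V_1' = N - \dim_\KK W_2$ and $\dim_\KK V_2' = N - \dim_\KK W_1$.

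Next I would extract the key orthogonality relation from the isotropy of~$W$: for any $(v_1,v_2), (v_1',v_2') \in W$, the vanishing of $b\oplus -b$ gives $b(v_1,v_1') = b(v_2,v_2')$. Specializing to $(v_1,v_2) \in W_2$ (so $v_1 = 0$) yields $b(v_2, v_2') = 0$ for every $v_2' \in V_2'$, hence $W_2 \subset (V_2')^{\perp_b}$, i.e.\ $V_2' \subset W_2^{\perp_b}$. Because $b$ is nondegenerate on~$V$, $\dim_\KK W_2^{\perp_b} = N - \dim_\KK W_2$, so $N - \dim_\KK W_1 = \dim_\KK V_2' \leq N - \dim_\KK W_2$, which gives $\dim_\KK W_2 \leq \dim_\KK W_1$. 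The symmetric argument (swap the two factors, using $-b$ in place of~$b$) yields the reverse inequality, so $\dim_\KK W_1 = \dim_\KK W_2 =: i$.

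Finally, since $W_1 \in \F_i(b)$ and $b$-isotropic subspaces have dimension at most~$n$, we have $0 \leq i \leq n$, and the definition of~$\pi$ gives $W \in \pi^{-1}(\F_i(b)\times\F_i(-b)) = \mathcal{U}_i$. The reverse inclusion $\bigcup_i \mathcal{U}_i \subset X$ is tautological. I do not anticipate any serious obstacle: the only delicate point is confirming that the dimension identity $\dim_\KK W^{\perp_b} = N - \dim_\KK W$ holds uniformly in our seven cases (in particular when $\KK = \bH$ and $b$ is Hermitian or anti-Hermitian), but this follows from the nondegeneracy of~$b$ and the fact that $b$ is $\KK$-linear in the second variable, which makes $W^{\perp_b}$ a $\KK$-subspace whose codimension equals $\dim_\KK W$.
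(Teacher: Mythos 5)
Your proof is correct, and it establishes exactly the key fact the paper uses, namely that $\dim_{\KK}(W\cap (V\oplus\{0\})) = \dim_{\KK}(W\cap(\{0\}\oplus V))$ for every maximal $(b\oplus -b)$-isotropic $W$, with the common value bounded by the Witt index~$n$. The only difference is in execution: you obtain the equality as two opposite inequalities, via the projections $p_1,p_2$ and the induced orthogonality relation inside $(V,b)$, whereas the paper gets it in a single chain of equalities from $W=W^{\perp}$ together with $(W+(\{0\}\oplus V))^{\perp}=W^{\perp}\cap(V\oplus\{0\})$; both arguments invoke maximality in the same essential way (you through $\dim_{\KK}W=N$ in rank--nullity, the paper through $W=W^{\perp}$), and the point you flag at the end --- that $\dim_{\KK}W^{\perp_b}=N-\dim_{\KK}W$ holds uniformly, including for ($\pm$)-Hermitian forms over $\bH$ --- is indeed the only thing to verify and follows from nondegeneracy as you say.
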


\begin{proof}
It is enough to prove that for any $W\in\F_N(b\oplus -b)$,
\begin{equation} \label{eqn:same-dim}
\dim_{\KK} (W \cap (\{ 0\} \oplus V)) = \dim_{\KK}(W \cap (V\oplus \{0\})).
\end{equation}
We have
\begin{align*}
\dim_{\KK} (W \cap (\{ 0\} \oplus V)) & =  \dim_{\KK}(W) + \dim_{\KK}(\{0\}\oplus V) - \dim_{\KK}(W + (\{ 0\} \oplus V))\\
& =  \dim_{\KK}(V\oplus V) - \dim_{\KK}(W + (\{ 0\} \oplus V))\\
& =  \dim_{\KK}(W + (\{ 0\} \oplus V))^{\perp},
\end{align*}
where $(W + (\{ 0\} \oplus V))^{\perp}$ denotes the orthogonal complement of $W + (\{ 0\} \oplus V)$ in $V\oplus V$ with respect to $b\oplus -b$.
But
\[ (W+(\{ 0\} \oplus V))^{\perp} = W^{\perp} \cap (\{ 0\} \oplus V) ^{\perp} = W^{\perp} \cap (V\oplus \{0\}), \]
hence $\dim_{\KK}(W \cap (\{ 0\} \oplus V)) = \dim_{\KK}(W^{\perp} \cap (V\oplus \{0\}))$.
Since $W$ is maximal isotropic for $b\oplus -b$, we have $W = W^{\perp}$, and so \eqref{eqn:same-dim} holds.
\end{proof}

For any $0\leq i\leq n$, let
\[\pi_i : \mathcal{U}_i \longrightarrow \F_i(b) \times \F_i(-b) \]
be the projection induced by~$\pi$.
By construction, $\pi_i$ is $(\Aut_{\KK}(b) \times \Aut_{\KK}(b))$-equivariant, hence surjective (because the action of $\Aut_{\KK}(b)$ on $\F_i(b) = \F_i(-b)$ is transitive).
We now describe the fibers of~$\pi_i$.
By equivariance and surjectivity, it is enough to determine the fiber of~\(\pi_i\) above one particular point of \(\F_i(b) \times\nolinebreak\F_i(b)\).

For any $V_i\in\F_i(b)$, let $b_{V_i}$ be the $\RR$-bilinear form induced by $b$ on $V_i^{\perp_b}/V_i\simeq\KK^{\dim_{\KK}(V)-2i}$.
If $b$ is symmetric, antisymmetric, Hermitian, or anti-Hermitian, then so is~$b_{V_i}$.
For instance, if $b$ is symmetric over~$\RR$ with signature $(p,q)$, then $b_{V_i}$ has signature $(p-i,q-i)$.

\begin{lemma} \label{lem:U=GxG/Diag}
For any $V_i\in\F_i(b)$, the fiber $\pi_i^{-1}(V_i,V_i) \subset \F_N(b\oplus -b)$ is the set of maximal $(b\oplus -b)$-isotropic $\KK$-subspaces of $V_i^{\perp_b}\oplus V_i^{\perp_b}$ that contain $V_i\oplus V_i$ and project to maximal isotropic subspaces of $(V_{i}^{\perp_b}/V_i) \oplus (V_{i}^{\perp_b}/V_i)$ transverse to both factors $(V_{i}^{\perp_b}/V_i) \oplus \{0\}$ and $\{0\} \oplus (V_{i}^{\perp_b}/V_i)$.
As an $(\Aut_{\KK}(b_{V_i}) \times \Aut_{\KK}(b_{V_i}))$-space, $\pi_i^{-1}(V_i,V_i)$ is isomorphic to
\[ (\Aut_{\KK}(b_{V_i}) \times \Aut_{\KK}(b_{V_i}))/\Diag(\Aut_{\KK}(b_{V_i})). \]
\end{lemma}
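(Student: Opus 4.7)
The plan is to first pin down what membership in the fiber forces on $W$, then pass to the quotient by $V_i\oplus V_i$ and identify the description in terms of graphs of automorphisms of $b_{V_i}$.

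First I would observe that if $\pi(W)=(V_i,V_i)$, then by definition $V_i\oplus\{0\}\subset W$ and $\{0\}\oplus V_i\subset W$, hence $V_i\oplus V_i\subset W$. Since $W$ is $(b\oplus -b)$-isotropic, this inclusion orthogonally forces $W\subset(V_i\oplus V_i)^{\perp_{b\oplus-b}}=V_i^{\perp_b}\oplus V_i^{\perp_b}$. A dimension count then gives $\dim_\KK W/(V_i\oplus V_i)=N-2i$, which is exactly the dimension of a maximal isotropic subspace of $(V_i^{\perp_b}/V_i)\oplus(V_i^{\perp_b}/V_i)$ with respect to the induced form $b_{V_i}\oplus-b_{V_i}$. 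Conversely, any $\KK$-subspace lying between $V_i\oplus V_i$ and $V_i^{\perp_b}\oplus V_i^{\perp_b}$ whose image in the quotient is maximal isotropic lifts to a point of $\F_N(b\oplus-b)$, using that $V_i\oplus V_i$ is the radical of the restriction of $b\oplus-b$ to $V_i^{\perp_b}\oplus V_i^{\perp_b}$.

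Next I would check the transversality condition. If $[(u,0)]$ lies in $(W/(V_i\oplus V_i))\cap((V_i^{\perp_b}/V_i)\oplus\{0\})$, pick a lift $(u+v_1,v_2)\in W$ with $v_1,v_2\in V_i$; subtracting $(0,v_2)\in\{0\}\oplus V_i\subset W$ yields $(u+v_1,0)\in W\cap(V\oplus\{0\})=V_i\oplus\{0\}$, so $u\in V_i$, showing $[(u,0)]=0$. The symmetric argument yields transversality to the other factor, and the converse is equally straightforward. This establishes the first part of the statement.

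Finally I would identify such maximal isotropic subspaces with elements of $\Aut_\KK(b_{V_i})$: transversality to both factors means that the image in the quotient is the graph of a $\KK$-linear isomorphism $\phi:V_i^{\perp_b}/V_i\to V_i^{\perp_b}/V_i$, and isotropy with respect to $b_{V_i}\oplus-b_{V_i}$ is exactly the identity $b_{V_i}(\phi x,\phi y)=b_{V_i}(x,y)$, i.e.\ $\phi\in\Aut_\KK(b_{V_i})$. The natural action of $\Aut_\KK(b_{V_i})\times\Aut_\KK(b_{V_i})$ on graphs is $(\phi_1,\phi_2)\cdot\mathrm{graph}(\phi)=\mathrm{graph}(\phi_2\phi\phi_1^{-1})$, which is transitive with stabilizer of $\mathrm{graph}(\mathrm{id})$ equal to $\Diag(\Aut_\KK(b_{V_i}))$. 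The only delicate point is keeping careful track of $\KK$-linearity versus $\RR$-bilinearity when $b$ is Hermitian or anti-Hermitian, but the argument is uniform once one works in the category of $\KK$-modules throughout.
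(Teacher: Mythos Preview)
Your proposal is correct and follows essentially the same route as the paper: both arguments show $V_i\oplus V_i\subset W\subset V_i^{\perp_b}\oplus V_i^{\perp_b}$, pass to the quotient $(V_i^{\perp_b}/V_i)\oplus(V_i^{\perp_b}/V_i)$, identify the transverse maximal isotropic subspaces there with graphs of elements of $\Aut_\KK(b_{V_i})$, and read off the stabilizer of the diagonal graph. Your verification of the transversality condition is in fact spelled out more explicitly than in the paper, which simply asserts it as a consequence of the intersection conditions defining the fiber.
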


\begin{proof}
By definition, any $W\in\pi_i^{-1}(V_i,V_i)$ satisfies $W \cap (V \oplus \{ 0\}) = V_i \oplus\nolinebreak \{ 0\}$ and $W \cap (\{ 0\} \oplus V) = \{ 0\} \oplus V_i$, hence $W$ contains $V_i\oplus V_i$ and $W \subset V_i^{\perp_b} \oplus V_i^{\perp_b}$ since $W$ is $(b\oplus\nolinebreak -b)$-isotropic.
Thus $\pi_i^{-1}(V_i,V_i)$ is the set of maximal $(b\oplus\nolinebreak -b)$-isotropic subspaces of $V_i^{\perp_b}\oplus V_i^{\perp_b}$ that contain $V_i\oplus V_i$ and correspond to maximal isotropic subspaces of $(V_{i}^{\perp_b}/V_i) \oplus (V_{i}^{\perp_b}/V_i)$ transverse to both factors.
In particular, $\pi_i^{-1}(V_i,V_i)$ identifies with its image in $\F_{N-2i}(b_{V_i}\oplus\nolinebreak -b_{V_i})$ and is endowed with an action of $\Aut_{\KK}(b_{V_i}) \times \Aut_{\KK}(b_{V_i})$.

We first check that this action of $\Aut_{\KK}(b_{V_i}) \times \Aut_{\KK}(b_{V_i})$ is transitive.
Let $W'_0$ be the image in $(V_i^{\perp_b}/V_i) \oplus (V_i^{\perp_b}/V_i)$ of
\[ \{ (v,v) \,|\, v\in V_i^{\perp_b}\} \subset V_i^{\perp_b}\oplus V_i^{\perp_b}. \]
The image $W'$ in $(V_i^{\perp_b}/V_i) \oplus (V_i^{\perp_b}/V_i)$ of any element of $\pi_i^{-1}(V_i,V_i)$ meets the second factor $V_i^{\perp_b}/V_i$ trivially, hence is the graph of some linear endomorphism $h$ of $V_i^{\perp_b}/V_i$.
This $h$ belongs to $\Aut_{\KK}(b_{V_i})$ since $W'$ is $(b_{V_i}\oplus -b_{V_i})$-isotropic.
Thus $W' = (e,h) \cdot W'_0$ lies in the $(\Aut_{\KK}(b_{V_i}) \times \Aut_{\KK}(b_{V_i}))$-orbit of~$W'_0$, proving transitivity.

Let us check that the stabilizer of $W'_0$ in $\Aut_{\KK}(b_{V_i}) \times \Aut_{\KK}(b_{V_i})$ is the diagonal $\Diag(\Aut_{\KK}(b_{V_i}))$.
For any $(g_1,g_2) \in \Aut_{\KK}(b_{V_i}) \times \Aut_{\KK}(b_{V_i})$,
\[ (g_1, g_2) \cdot W'_0 = \{ (g_1(v), g_2(v)) ~|~ v \in V_i^{\perp_b}/V_i \} = \{ (v, g_1^{-1}g_2(v)) ~|~ v \in V_i^{\perp_b}/V_i \}, \]
and so $(g_1,g_2) \cdot W'_0 = W'_0$ if and only if $g_1 = g_2$.
\end{proof}

In particular, taking $i=0$, we obtain that $\mathcal{U}_0$ is an $(\Aut_{\KK}(b) \times \Aut_{\KK}(b))$-space isomorphic to
\[ (\Aut_{\KK}(b) \times \Aut_{\KK}(b))/\Diag(\Aut_{\KK}(b)). \]

\begin{lemma}\label{lem:G-orbits}
For any $0\leq i\leq n$, the action of $\Aut_{\KK}(b) \times \Aut_{\KK}(b)$ on~$\mathcal{U}_i$ is transitive.
\end{lemma}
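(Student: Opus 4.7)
The plan is to reduce transitivity of $\Aut_{\KK}(b) \times \Aut_{\KK}(b)$ on $\mathcal{U}_i$ to the transitivity statement on the fiber already proved in Lemma~\ref{lem:U=GxG/Diag}, via the equivariant fibration $\pi_i$.

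First I would observe that since $\pi_i:\mathcal{U}_i \to \F_i(b)\times \F_i(-b)$ is $(\Aut_{\KK}(b)\times\Aut_{\KK}(b))$-equivariant and the action of $\Aut_{\KK}(b)$ on $\F_i(b)=\F_i(-b)$ is transitive (by Witt's theorem), the action of $\Aut_{\KK}(b)\times\Aut_{\KK}(b)$ on $\F_i(b)\times\F_i(-b)$ is transitive. Given two points $W,W'\in\mathcal{U}_i$, one may thus find $(g_1,g_2)\in\Aut_{\KK}(b)\times\Aut_{\KK}(b)$ sending $\pi_i(W')$ to $\pi_i(W)$, and it remains to show transitivity of the stabilizer of a point $(V_i,V_i)\in\F_i(b)\times\F_i(-b)$ on the fiber $\pi_i^{-1}(V_i,V_i)$.

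Next I would identify this stabilizer with $P_i\times P_i$, where $P_i = \mathrm{Stab}_{\Aut_{\KK}(b)}(V_i)$ is a parabolic subgroup. The natural action of $P_i$ on $V_i^{\perp_b}/V_i$ preserves the induced form $b_{V_i}$, yielding a homomorphism $P_i \to \Aut_{\KK}(b_{V_i})$. By a standard Witt extension argument, any automorphism of $(V_i^{\perp_b}/V_i, b_{V_i})$ lifts to an element of $P_i$: choose a $b$-isotropic complement $V_i^\vee$ to $V_i^{\perp_b}$, extend the automorphism by the identity on $V_i$ and by the adjoint inverse on $V_i^\vee$. Hence the homomorphism $P_i\times P_i \to \Aut_{\KK}(b_{V_i}) \times \Aut_{\KK}(b_{V_i})$ is surjective.

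Finally, by Lemma~\ref{lem:U=GxG/Diag} the group $\Aut_{\KK}(b_{V_i}) \times \Aut_{\KK}(b_{V_i})$ acts transitively on $\pi_i^{-1}(V_i,V_i)$; combining with the surjectivity above shows that $P_i\times P_i$ acts transitively on this fiber, completing the proof. I do not expect any real obstacle: the only non-formal input is the surjectivity $P_i \twoheadrightarrow \Aut_{\KK}(b_{V_i})$, which is a familiar consequence of Witt's extension theorem in each of the classical settings covered by $\KK$ and the type of~$b$.
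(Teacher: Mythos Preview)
Your proposal is correct and follows essentially the same approach as the paper: reduce transitivity on $\mathcal{U}_i$ to transitivity on a fiber via the equivariant map $\pi_i$, then invoke Lemma~\ref{lem:U=GxG/Diag}. The paper's proof is more terse, simply asserting that fiber transitivity ``follows from Lemma~\ref{lem:U=GxG/Diag}''; you make explicit the intermediate step that the stabilizer $P_i\times P_i$ surjects onto $\Aut_{\KK}(b_{V_i})\times\Aut_{\KK}(b_{V_i})$, which is precisely the detail the paper leaves to the reader.
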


\begin{proof}
The map $\pi_i$ is $(\Aut_{\KK}(b)\times\Aut_{\KK}(b))$-equivariant and the action of\linebreak $\Aut_{\KK}(b)\times\Aut_{\KK}(b)$ on $\F_i(b) \times \F_i(-b)$ is transitive, hence it is enough to see that for any $V_i\in\F_i(b)$ the action of the stabilizer of $(V_i,V_i)$ in $\Aut_{\KK}(b)\times\Aut_{\KK}(b)$ is transitive on the fiber $\pi_i^{-1}(V_i,V_i)$.
This follows from Lemma~\ref{lem:U=GxG/Diag}.
\end{proof}

\begin{lemma}\label{lem:dimUi}
  For any \(0\leq i\leq n\), the dimension of \(\mathcal{U}_i\) is
  \begin{equation*}
    \dim_\RR (\mathcal{U}_i) = \dim_\RR(\Aut_{\KK}(b)) - i^2 \dim_{\RR}(\KK).
  \end{equation*}
\end{lemma}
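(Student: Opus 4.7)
The plan is to exploit the fibration structure established in Lemma~\ref{lem:U=GxG/Diag}: since $\pi_i : \mathcal{U}_i \to \F_i(b) \times \F_i(-b)$ is surjective with fibers of the form $(H_i \times H_i)/\Diag(H_i) \cong H_i$, and since $\mathcal{U}_i$ is a single $(G\times G)$-orbit by Lemma~\ref{lem:G-orbits}, the dimension decomposes as
\[ \dim_{\RR}(\mathcal{U}_i) \,=\, 2\dim_{\RR}(\F_i(b)) + \dim_{\RR}(H_i). \]
So the whole proof reduces to computing $\dim_{\RR}(\F_i(b))$ in terms of $\dim_{\RR}(G)$, $\dim_{\RR}(H_i)$ and $i^2 \dim_{\RR}(\KK)$.

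First I would fix $V_i \in \F_i(b)$ and identify $\F_i(b)$ with $G/P_i$, where $P_i$ is the stabilizer of~$V_i$ in $G = \Aut_{\KK}(b)$. Next I would write down the Levi decomposition $P_i = L_i \ltimes N_i$, observing that the Levi factor $L_i$ is isomorphic, as a real Lie group, to $\GL_{\KK}(V_i) \times \Aut_{\KK}(b_{V_i}) = \GL_{\KK}(V_i) \times H_i$ (this comes from choosing a Lagrangian-type complement $V_i'$ to $V_i^{\perp_b}$ and letting $L_i$ be the subgroup of $P_i$ preserving both $V_i$ and $V_i'$). Since $V_i$ has $\KK$-dimension~$i$, one has $\dim_{\RR}(\GL_{\KK}(V_i)) = i^2 \dim_{\RR}(\KK)$ in each of the three cases $\KK = \RR, \CC, \bH$, and therefore
\[ \dim_{\RR}(L_i) \,=\, i^2 \dim_{\RR}(\KK) + \dim_{\RR}(H_i). \]

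The second ingredient is the standard fact that the opposite parabolic $P_i^-$ has unipotent radical $N_i^-$ of the same dimension as~$N_i$, and that the tangent space to the flag manifold $G/P_i$ at the base point identifies with the Lie algebra of~$N_i^-$. Hence
\[ \dim_{\RR}(G) \,=\, \dim_{\RR}(L_i) + 2\dim_{\RR}(N_i)\qquad\text{and}\qquad \dim_{\RR}(G/P_i) \,=\, \dim_{\RR}(N_i), \]
so $2\dim_{\RR}(G/P_i) = \dim_{\RR}(G) - \dim_{\RR}(L_i)$.

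Combining these, I obtain
\[ \dim_{\RR}(\mathcal{U}_i) \,=\, \big(\dim_{\RR}(G) - \dim_{\RR}(L_i)\big) + \dim_{\RR}(H_i) \,=\, \dim_{\RR}(G) - i^2\dim_{\RR}(\KK), \]
as desired. The only mildly delicate step is the identification of the Levi factor $L_i$ with $\GL_{\KK}(V_i) \times H_i$ and the bookkeeping of real versus $\KK$-dimensions; this is straightforward once one picks an isotropic complement $V_i'$ to $V_i^{\perp_b}$ pairing nondegenerately with~$V_i$ under~$b$, which exists by Witt's theorem. As a sanity check one may verify the formula directly for $i=0$ (where $\mathcal{U}_0 \cong G$) and for $i=n$ (the minimal-dimensional closed orbit), both of which match.
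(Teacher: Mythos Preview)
Your proof is correct and follows essentially the same route as the paper's. Both arguments start from the fibration identity $\dim_{\RR}(\mathcal{U}_i) = 2\dim_{\RR}(\F_i(b)) + \dim_{\RR}(H_i)$ and then compute $2\dim_{\RR}(\F_i(b)) = \dim_{\RR}(G) - \dim_{\RR}(L_i)$ via the Levi factor $L_i \cong \GL_{\KK}(V_i)\times H_i$; the paper simply phrases this last step in terms of an opposite parabolic $P'_i$ (so that $P_i\cap P'_i$ is your $L_i$ and the openness of $P_iP'_i$ encodes your identity $\dim G = \dim L_i + 2\dim N_i$), but the content is identical.
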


\begin{proof}
Consider two elements $V_i,V'_i\in\mathcal{F}_i(b)$ such that \(V_{i}^{\perp_b} \cap V'_i=\{0\}\).
Let \(V_{2i}\) be the sum in~\(V\) of \(V_i\) and \(V'_i\), and \(W = V_{i}^{\perp_b} \cap V_{i}^{\prime\perp_b} \simeq V_{i}^{\perp_b}/V_i\).
The restrictions of \(b\) to \(V_{2i}\) and to \(W\) are nondegenerate and \(V\) is the direct \(b\)-orthogonal sum of
\(V_{2i}\) and~\(W\).
The parabolic subgroups \(P_i = \mathrm{Stab}_{\Aut_{\KK}(b)}(V_i)\) and \(P'_i = \mathrm{Stab}_{\Aut_{\KK}(b)}(V'_i)\) are conjugate in~\(G\) and the set 
\(P_i P'_i = \{ p_i p'_i \mid p_i\in P_i,\ p'_i\in P'_i\}\) is open in \(G\) since the sum of the Lie algebras of \(P_i\) and of~\(P'_i\) is equal to the Lie algebra of $\Aut_{\KK}(b)$.
Therefore,
\begin{equation*}
  \dim_\RR(\Aut_{\KK}(b)) = \dim_\RR(P_i P'_i) = 2 \dim_\RR(P_i) -\dim_\RR(P_i \cap P'_i).
\end{equation*}
It is easy to see that
\begin{eqnarray*}
P_i \cap P'_i & = & \big(\mathrm{Stab}_{\Aut_\KK(b|_{V_{2i}})}(V_i) \cap \mathrm{Stab}_{\Aut_\KK(b|_{V_{2i}})}(V'_i)\big) \times \Aut_\KK(b|_W)\\
& \simeq & \GL_{\KK}(V_i) \times \Aut_\KK(b_{V_i}).
\end{eqnarray*}
This implies
\begin{eqnarray*}
2 \dim_\RR(P_i) & = & \dim_\RR(\Aut_{\KK}(b)) - \dim_\RR(\GL_\KK(V_i)) - \dim_\RR(\Aut_\KK(b_{V_i}))\\
& = & \dim_\RR(\Aut_{\KK}(b)) - i^2 \dim_\RR(\KK) - \dim_\RR(\Aut_\KK(b_{V_i})).
\end{eqnarray*}
Using Lemma~\ref{lem:U=GxG/Diag}, we obtain
\begin{eqnarray*}
\dim_{\RR}(\mathcal{U}_i) & = & 2\dim_{\RR}(\F_i(b)) + \dim_{\RR}(\Aut_{\KK}(b_{V_i}))\\
& = & 2 \dim_\RR(\Aut_{\KK}(b)) - 2 \dim_\RR(P_i) + \dim_{\RR}(\Aut_{\KK}(b_{V_i}))\\
& = & \dim_\RR(\Aut_{\KK}(b)) - i^2 \dim_\RR(\KK). \qedhere
\end{eqnarray*}
\end{proof}

By Lemma~\ref{lem:dimUi}, we have $\dim_{\RR}(\mathcal{U}_i)>\dim_{\RR}(\mathcal{U}_j)$ for all $0\leq i<j\leq n$.
The function $W\mapsto\dim_{\RR}(W\cap (V\oplus\{ 0\}))$ is upper semicontinuous on $\F_N(b\oplus\nolinebreak -b)$.
Therefore, for any $0\leq i\leq n$, the closure $S_i$ of $\mathcal{U}_i$ in $\F_N(b\oplus -b)$ is the union of the submanifolds $\mathcal{U}_j$ for $i\leq j\leq n$.

By the Iwasawa decomposition, any maximal compact subgroup of $\Aut_{\KK}(b\oplus\nolinebreak -b)$ acts transitively on the flag variety $\F_N(b\oplus -b)$. 
By computing the stabilizer of a point in each case, we see that $\F_N(b\oplus -b)$ identifies with a Riemannian symmetric space of the compact type as in Table~\ref{table1}.
This completes the proof of Theorem~\ref{thm:compactif-Aut(b)}.

\subsection{The case $G = \GL_{\KK}(V)$}

We now establish an analogue of Theorem~\ref{thm:compactif-Aut(b)} when $G = \GL_{\KK}(V)$ is the full group of invertible $\KK$-linear transformations of~$V$.
Here we use the notation $\F_i(V)$ to denote the Grassmannian of $i$-dimensional $\KK$-subspaces of~$V$, and $N$ to denote $\dim_{\KK}(V)$.
Then \eqref{eqn:pi} defines a map
\[ \pi : \F_N(V\oplus V) \longrightarrow \bigg(\bigcup_{i=0}^N \F_i(V)\bigg) \times \bigg(\bigcup_{i=0}^N \F_i(V)\bigg). \]

\begin{theorem} \label{thm:compactif-GL}
Let $V$ be an $N$-dimensional vector space over $\KK=\RR$, $\CC$, or~$\bH$, and $G = \GL_{\KK}(V)$. 
The Grassmannian $X= \F_N(V\oplus V)$ of $N$-dimensional $\KK$-subspaces of $V\oplus V$ is a smooth compactification of the group manifold $(G\times G)/\Diag(G)$ with the following properties: 
\begin{enumerate}
  \item \label{item:cpt-sym-space-GL} $X$ is a real analytic manifold (in fact complex analytic if $\KK=\CC$).
  Under the action of a maximal compact subgroup of $\GL_{\KK}(V\oplus V)$, it identifies with a Riemannian symmetric space of the compact type, namely
   \begin{itemize}
    \item $\OO(2N)/(\OO(N)\times\OO(N))$ if $\KK=\RR$,
    \item $\U(2N)/(\U(N)\times\U(N))$ if $\KK=\CC$,
    \item $\Sp(2N)/(\Sp(N)\times\Sp(N))$ if $\KK=\bH$.
  \end{itemize}
  \item The $(G \times G)$-orbits in~$X$ are the submanifolds $\mathcal{U}_{i,j}  := \pi^{-1}(\F_i(V) \times \F_j(V))$ for $0\leq i+ j\leq N$; there are $(N+1)(N+2)/2$ of them.
  They have dimension $\dim_{\KK}(\mathcal{U}_{i,j}) = \dim_{\KK}(G) - i^2 - j^2$.
  The closure of $\mathcal{U}_{i,j}$ in $X$ is $\bigcup_{k\geq i,\, \ell\geq j} \mathcal{U}_{k,\ell}$.
  \item For $0\leq i+j\leq N$, the map $\pi$ defines a fibration $\pi_{i,j}$ of $\mathcal{U}_{i,j}$ over $\F_i(V) \times \F_j(V)$ with fibers given by Lemma~\ref{lem:U=GxG/Diag-GL} below.
\end{enumerate}
In particular, \(\mathcal{U}_{0,0}\) is the unique open \((G\times G)\)-orbit in~$X$ and it identifies with\linebreak \((G\times G) / \Diag(G)\).
\end{theorem}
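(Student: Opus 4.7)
My plan is to adapt the proof of Theorem~\ref{thm:compactif-Aut(b)} to this setting. The main structural difference is that, without a bilinear form, there is no analogue of Lemma~\ref{lem:sets-equal}: the two intersection dimensions $\dim_\KK(W \cap (V \oplus 0))$ and $\dim_\KK(W \cap (0 \oplus V))$ can vary independently, subject only to the constraint $i + j \leq N$ coming from the direct-sum inclusion $(W \cap (V \oplus 0)) \oplus (W \cap (0 \oplus V)) \subseteq W$. Thus for any $W \in \F_N(V \oplus V)$, setting $i = \dim_\KK(W \cap (V \oplus 0))$ and $j = \dim_\KK(W \cap (0 \oplus V))$ gives a decomposition $X = \bigsqcup_{i+j \leq N} \mathcal{U}_{i,j}$ into $\sum_{s=0}^N (s+1) = (N+1)(N+2)/2$ pieces.

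For each such pair $(i,j)$ and each $(V_i, V_j) \in \F_i(V) \times \F_j(V)$, I would describe the fiber $\pi_{i,j}^{-1}(V_i, V_j)$ as follows: it consists of the $N$-dimensional $W$ containing $V_i \oplus V_j$ whose image $\bar W$ in $(V/V_i) \oplus (V/V_j)$ intersects each factor trivially. Such $\bar W$ is the graph of an injective $\KK$-linear map from an $(N{-}i{-}j)$-dimensional subspace of $V/V_j$ into $V/V_i$, and this parametrization (to be recorded as the Lemma~\ref{lem:U=GxG/Diag-GL} referenced in the statement, playing the role of Lemma~\ref{lem:U=GxG/Diag}) makes clear that the parabolic $P_{V_i} \times P_{V_j}$ acts transitively on the fiber. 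Combined with the transitivity of $G \times G$ on $\F_i(V) \times \F_j(V)$, this shows that $\mathcal{U}_{i,j}$ is a single $(G \times G)$-orbit, and the fibration then yields $\dim_\KK(\mathcal{U}_{i,j}) = i(N-i) + j(N-j) + N(N-i-j) = N^2 - i^2 - j^2$, as required.

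The closure statement follows from the joint upper semicontinuity of the two intersection-dimension functions. For item (1), as in the proof of Theorem~\ref{thm:compactif-Aut(b)}, the Iwasawa decomposition implies that a maximal compact subgroup of $\GL_\KK(V \oplus V)$ acts transitively on $\F_N(V \oplus V)$ with point-stabilizer isomorphic to the direct product of two copies of the maximal compact subgroup of $\GL_\KK(V)$, which gives the symmetric space presentations listed in (1). Finally, $\mathcal{U}_{0,0}$ consists precisely of graphs of $\KK$-linear isomorphisms $\phi \colon V \to V$; under the $(G \times G)$-action, the graph of $\phi$ is sent to the graph of $g_2 \phi g_1^{-1}$, so the stabilizer of the identity graph is $\Diag(G)$, identifying $\mathcal{U}_{0,0}$ with $(G \times G)/\Diag(G)$.

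The main subtlety I foresee lies in Lemma~\ref{lem:U=GxG/Diag-GL}: unlike in the symmetric case, where the fiber was itself a group manifold of the form $(H_i \times H_i)/\Diag(H_i)$, here the fiber is a space of graphs of partial isomorphisms between subspaces of $V/V_j$ and $V/V_i$, and transitivity of the parabolic action on this space must be verified directly, most conveniently through a Bruhat-type decomposition of the relevant $(N{-}i{-}j)$-subspaces. Once that is in place, the remaining arguments run in close analogy with those of Theorem~\ref{thm:compactif-Aut(b)}.
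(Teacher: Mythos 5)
Your proposal is correct and follows essentially the same route as the paper: partition $X$ by the pair of intersection dimensions $(i,j)$ with $i+j\leq N$, identify the fibers of $\pi_{i,j}$ (you describe them as graphs of injections from $(N{-}i{-}j)$-dimensional subspaces of $V/V_j$ into $V/V_i$, where the paper instead exhibits a base point $W_0$ and computes its stabilizer explicitly), deduce transitivity and the dimension count $N^2-i^2-j^2$, get the closures from upper semicontinuity, and use the Iwasawa decomposition for the compact symmetric-space description. The only cosmetic difference is that the paper's Lemma~\ref{lem:U=GxG/Diag-GL} records the full stabilizer of $W_0$ as an explicit semidirect product, which your graph parametrization encodes implicitly.
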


The proof of Theorem~\ref{thm:compactif-GL} is similar to Theorem~\ref{thm:compactif-Aut(b)}: the group $\GL_{\KK}(V) \times \GL_{\KK}(V)$ naturally embeds into $\GL_{\KK}(V \oplus V)$.
For $i,j\in\NN$ with $i+j\leq N$, the set
\[ \mathcal{U}_{i,j} := \pi^{-1}\big(\F_i(V) \times \F_j(V)\big) \]
is clearly invariant under $\GL_{\KK}(V) \times \GL_{\KK}(V)$, and $X = \F_N(V\oplus V)$ is the union of these sets~$\mathcal{U}_{i,j}$.
Let
\[\pi_{i,j} : \mathcal{U}_{i,j} \longrightarrow \F_i(V) \times \F_j(V) \]
be the projection induced by~$\pi$.
By construction, $\pi_{i,j}$ is $(\GL_{\KK}(V) \times \GL_{\KK}(V))$-equivariant, hence surjective (because the action of $\GL_{\KK}(V)$ on $\F_i(V)$ and $\F_j(V)$ is transitive).
As above, it is enough to determine the fiber of~\(\pi_{i,j}\) above one particular point of \(\F_i(V) \times \F_j(V)\).
Let $(e_1,\dots,e_N)$ be a basis of~$V$.
We set
\[ \left\{ \begin{array}{ccl}
V_i & := & \mathrm{span}_{\KK}(e_1,\dots,e_i),\\
V'_i & := & \mathrm{span}_{\KK}(e_{i+1},\dots,e_N),\\
V_j & := & \mathrm{span}_{\KK}(e_{N-j+1},\dots,e_N),\\
V'_j & := & \mathrm{span}_{\KK}(e_1,\dots,e_{N-j}),\\
V'_{i,j} & := & V'_i\cap V'_j = \mathrm{span}_{\KK}(e_{i+1},\dots,e_{N-j}),
\end{array} \right.\]
so that $V$ is the direct sum of $V_i$ and~$V'_i$, and also of $V_j$ and~$V'_j$.
By assumption, $i+j\leq N$, hence $V_i\cap V_j=\{ 0\}$.
The quotient $V/V_i$ identifies with $V'_i$, which is the direct sum of $V'_{i,j}:=V'_i\cap V'_j$ and~$V_j$.
Similarly, the quotient $V/V_j$ identifies with $V'_j$, which is the direct sum of $V'_{i,j}$ and~$V_i$.
We see $(V_i,V_j)$ as an element of $\F_i(V)\times\F_j(V)$.

\begin{lemma} \label{lem:U=GxG/Diag-GL}
The fiber $\pi_{i,j}^{-1}(V_i,V_j) \subset \F_N(V\oplus V)$ is the set of $N$-dimensional $\KK$-subspaces of $V\oplus V$ that contain $V_i\oplus V_j$ and project to $(N-i-j)$-dimen\-sional $\KK$-subspaces of $(V/V_i) \oplus (V/V_j)$ transverse to both factors $(V/V_i) \oplus\nolinebreak\{0\}$ and $\{0\} \oplus (V/V_j)$.
As a $\GL_{\KK}(V/V_i) \times \GL_{\KK}(V/V_j)$-space, $\pi_i^{-1}(V_i,V_j)$ is isomorphic to the quotient of $\GL_{\KK}(V/V_i) \times \GL_{\KK}(V/V_j) \simeq \GL_\KK(V'_i) \times \GL_\KK(V'_j)$ by the subgroup
\begin{equation} \label{eqn:stab-GL-V'i-V'j}
\bigl(\GL_\KK(V_j) \times \GL_\KK(V_i) \times \Diag(\GL_\KK(V'_{i,j}))\bigr) \ltimes \bigl( (V_i^* \otimes V'_{i,j}) \oplus (V_j^* \otimes V'_{i,j})\bigr).
\end{equation}
\end{lemma}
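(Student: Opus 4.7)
The strategy parallels the proof of Lemma~\ref{lem:U=GxG/Diag}. For any $W\in\pi_{i,j}^{-1}(V_i,V_j)$, the definition of $\pi$ forces $W\cap(V\oplus\{0\})=V_i\oplus\{0\}$ and $W\cap(\{0\}\oplus V)=\{0\}\oplus V_j$, so $W$ contains $V_i\oplus V_j$. Conversely, any $N$-dimensional subspace $W\supset V_i\oplus V_j$ projects to an $(N-i-j)$-dimensional subspace $W'$ of $(V/V_i)\oplus(V/V_j)$, and the two intersection conditions translate precisely into the transversality of~$W'$ to both $(V/V_i)\oplus\{0\}$ and $\{0\}\oplus(V/V_j)$. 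This yields the first assertion and identifies $\pi_{i,j}^{-1}(V_i,V_j)$ with a $(\GL_\KK(V/V_i)\times\GL_\KK(V/V_j))$-set.

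To treat the second assertion, use the identifications $V/V_i\simeq V'_i$ and $V/V_j\simeq V'_j$ and take as base point the graph $W'_0:=\{(v,v):v\in V'_{i,j}\}\subset V'_i\oplus V'_j$, which clearly satisfies the dimension and transversality requirements. For transitivity, observe that by transversality any $W'$ in the fiber is the graph $\{(u,\varphi(u)):u\in A\}$ of a linear isomorphism $\varphi:A\to B$, where $A\subset V'_i$ and $B\subset V'_j$ are subspaces of dimension $N-i-j$. One then picks $g_1\in\GL_\KK(V'_i)$ with $g_1(V'_{i,j})=A$ and $g_2\in\GL_\KK(V'_j)$ with $g_2(V'_{i,j})=B$ adjusted so that $g_2|_{V'_{i,j}}=\varphi\circ g_1|_{V'_{i,j}}$; this is possible because $\GL_\KK(V'_j)$ acts transitively on linear isomorphisms $V'_{i,j}\to B$. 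Then $(g_1,g_2)\cdot W'_0=W'$.

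The final step is to compute the stabilizer of~$W'_0$. An element $(g_1,g_2)$ stabilizes $W'_0$ if and only if each $g_k$ preserves~$V'_{i,j}$ and $g_1|_{V'_{i,j}}=g_2|_{V'_{i,j}}$. Using the direct-sum decompositions $V'_i=V'_{i,j}\oplus V_j$ and $V'_j=V'_{i,j}\oplus V_i$, the stabilizer of $V'_{i,j}$ in $\GL_\KK(V'_i)$ is the maximal parabolic with Levi $\GL_\KK(V'_{i,j})\times\GL_\KK(V_j)$ and unipotent radical $V_j^*\otimes V'_{i,j}$, and likewise for $\GL_\KK(V'_j)$ with $V_j$ and $V_i$ interchanged. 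Imposing the diagonal condition on the $\GL_\KK(V'_{i,j})$-factors then produces exactly the subgroup in~\eqref{eqn:stab-GL-V'i-V'j}.

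The only delicate point I expect is the bookkeeping of the semidirect product structure after imposing the diagonal constraint: one must verify that the two unipotent radicals remain a normal subgroup of the whole stabilizer and that $\Diag(\GL_\KK(V'_{i,j}))$ acts on $(V_i^*\otimes V'_{i,j})\oplus(V_j^*\otimes V'_{i,j})$ through a single copy of $\GL_\KK(V'_{i,j})$ on each summand. This is a direct verification using the explicit block form of elements of the two parabolics, but care is needed to track the two separate $\GL_\KK(V'_{i,j})$-factors and check that the constraint $g_1|_{V'_{i,j}}=g_2|_{V'_{i,j}}$ is compatible with conjugation by unipotent elements, which it is because the unipotent parts act trivially on the $V'_{i,j}$-subspace.
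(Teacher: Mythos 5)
Your argument is correct and follows the same route as the paper: identify the fiber with the set of graphs over $V'_{i,j}$-translates inside $V'_i\oplus V'_j$, exhibit $W'_0=\{(v,v)\mid v\in V'_{i,j}\}$ as a base point, prove transitivity by realizing any fiber element as the graph of an isomorphism $A\to B$, and compute the stabilizer via the two parabolics with the diagonal constraint on the $\GL_\KK(V'_{i,j})$-Levi factors. The paper compresses all of this into ``one easily checks''; your write-up simply supplies the details, and the semidirect-product bookkeeping you flag at the end does go through exactly as you describe.
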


\begin{proof}
The first statement is clear.
For the second statement, one easily checks that $\pi_{i,j}^{-1}(V_i,V_j)$ is the $(\GL_{\KK}(V'_i) \times \GL_{\KK}(V'_j))$-orbit of
\[ W_0 := (V_i \oplus \{ 0\}) + (\{ 0\}\oplus V_j) + \big\{ (v,v) ~|~ v\in V'_{i,j}\big\} \]
and that the stabilizer of $W_0$ in $\GL_{\KK}(V'_i) \times \GL_{\KK}(V'_j)$ is \eqref{eqn:stab-GL-V'i-V'j}.
\end{proof}

In particular, $\mathcal{U}_{0,0}$ is a $(\GL_{\KK}(V) \times \GL_{\KK}(V))$-space isomorphic to
\[ (\GL_{\KK}(V) \times \GL_{\KK}(V))/\Diag(\GL_{\KK}(V)). \]
Similarly to Lemma~\ref{lem:G-orbits}, for any $i,j\in\NN$ with $i+j\leq N$, the action of $\GL_{\KK}(V) \times \GL_{\KK}(V)$ on~$\mathcal{U}_{i,j}$ is transitive.
Note that $\dim_{\KK}(\F_i(V)) = i(N-i)$.
From Lemma~\ref{lem:U=GxG/Diag-GL} we compute 
$\dim_\KK(\pi_{i,j}^{-1}(V_i,V_j)) = N^2 - (i+j)N$, and so
\begin{eqnarray*}
\dim_{\KK}(\mathcal{U}_i) & = & \dim_{\KK}(\F_i(V)) + \dim_{\KK}(\F_j(V)) + \dim_\KK(\pi_{i,j}^{-1}(V_i,V_j))\\
& = & N^2 - i^2 - j^2. 
\end{eqnarray*}
In particular, $\dim_{\RR}(\mathcal{U}_{i,j})>\dim_{\RR}(\mathcal{U}_{k,\ell})$ for all $(i,j)\neq (k,\ell)$ with $i\leq k$ and $j\leq\ell$.
By uppersemicontinuity of the functions $W\mapsto\dim_{\RR}(W\cap (V\oplus\{ 0\}))$ and $W\mapsto\dim_{\RR}(W\cap (\{ 0\}\oplus V))$, the closure $S_{i,j}$ of $\mathcal{U}_{i,j}$ in $\F_N(V\oplus V)$ is the union of the submanifolds $\mathcal{U}_{k,\ell}$ for $k\geq i$ and $\ell\geq j$.

By the Iwasawa decomposition, any maximal compact subgroup of $\GL_{\KK}(V\oplus V)$ acts transitively on the flag variety $\F_N(V\oplus V)$.
By computing the stabilizer of a point, we see that $\F_N(V\oplus V)$ identifies with a Riemannian symmetric space of the compact type as in Theorem~\ref{thm:compactif-GL}.\eqref{item:cpt-sym-space-GL}.
This completes the proof of Theorem~\ref{thm:compactif-GL}.

\section{Reminders on Anosov representations and their\\ domains of discontinuity} \label{sec:rem-Anosov}

In this section we recall the definition of an Anosov representation into a reductive Lie group, see \cite{Labourie_anosov, Guichard_Wienhard_DoD, GGKW_anosov}, and the construction of domains of discontinuity given in \cite{Guichard_Wienhard_DoD}.
We first introduce some notation.

\subsection{Notation}

Let $G$ be a real reductive Lie group with Lie algebra~$\g$.
We assume $G$ to be noncompact, equal to a finite union of connected components (for the real topology) of $\mathbf{G}(\RR)$ for some algebraic group~$\mathbf{G}$.
Then $\g = \mathfrak{z}(\g) + \g_s$, where $\mathfrak{z}(\g)$ is the Lie algebra of the center of~$G$ and $\g_s$ the Lie algebra of the derived subgroup of~$G$, which is semisimple.
Let $K$ be a maximal compact subgroup of~$G$, with Lie algebra~$\mathfrak{k}$, and let $\aaa = (\aaa\cap\mathfrak{z}(\g)) + (\aaa\cap\g_s)$ be a maximal abelian subspace of the orthogonal complement of $\mathfrak{k}$ in~$\g$ for the Killing form (in Section~\ref{sec:proof-comp-general} we shall call this a \emph{Cartan subspace} of~$\g$).
The \emph{real rank} of~$G$ is by definition the dimension of~$\aaa$.
Let $\Sigma$ be the set of restricted roots of $\aaa$ in~$\g$, \ie the set of nonzero linear forms $\alpha\in\aaa^*$ for which
\[ \g_{\alpha} := \{ z\in\g ~|~ \ad(a)(z) = \alpha(a)\,z \quad\forall a\in\aaa\} \]
is nonzero.
Let $\Delta\subset\Sigma$ be a choice of system of \emph{simple roots}, \ie any element of~$\Sigma$ is expressed uniquely as a linear combination of elements of \(\Delta\) with coefficients all of the same sign.
Let 
\[ \overline{\aaa}^+ := \{Y \in \aaa \mid \alpha(Y) \geq 0\quad \forall \alpha \in \Delta\} \]
be the closed positive Weyl chamber of~$\aaa$ associated with~$\Delta$.
The \emph{Weyl group} of $\aaa$ in~$\g$ is the group $W=N_K(\aaa)/Z_K(\aaa)$, where $N_K(\aaa)$ (\resp $Z_K(\aaa)$) is the normalizer (\resp centralizer) of $\aaa$ in~$K$.
There is a unique element $w_0 \in W$ such that $w_0\cdot\overline{\aaa}^+=-\overline{\aaa}^+$; the involution of~$\aaa$ defined by $Y\mapsto -w_0\cdot Y$ is called the \emph{opposition involution}.
The corresponding dual linear map preserves~$\Delta$; we shall denote it by
\begin{align}\label{eqn:opp-inv}
  \aaa^{\ast} & \longrightarrow  \aaa^{\ast}\\
  \alpha\, & \longmapsto  \alpha^{\star} = -w_0\cdot \alpha. \notag
\end{align}
Recall that the \emph{Cartan decomposition} $G=K(\exp\overline{\aaa}^+)K$ holds: any $g\in G$ may be written $g=k(\exp\mu(g))k'$ for some $k,k'\in K$ and a unique $\mu(g)\in\overline{\aaa}^+$ (see \cite[Ch.\,IX, Th.\,1.1]{Helgason}).
This defines a map
\[ \mu : G \longrightarrow \overline{\aaa}^+ \]
called the \emph{Cartan projection}, inducing a homeomorphism $K \backslash G / K \simeq \overline{\aaa}^+$.
We refer to \cite[\S\,2]{GGKW_anosov} for more details.

Let $\Sigma^+ \subset \Sigma$ be the set of positive roots with respect to~$\Delta$, \ie roots that are nonnegative linear combinations of elements of~\(\Delta\).
For any nonempty subset $\theta\subset\Delta$, we denote by $P_\theta$ the normalizer in~$G$ of the Lie algebra $\mathfrak{u}_\theta = \bigoplus_{\alpha \in \Sigma_{\theta}^{+}} \g_\alpha$, where $\Sigma_{\theta}^{+} = \Sigma^+ \smallsetminus  \mathrm{span}(\Delta \smallsetminus \theta)$ is the set of positive roots that do \emph{not} belong to the span of $\Delta \smallsetminus \theta$.
Explicitly,
\[\mathrm{Lie}(P_{\theta}) = \g_0 \oplus \bigoplus_{\alpha\in\Sigma^+} \g_{\alpha} \oplus \bigoplus_{\alpha\in\Sigma^+\cap\mathrm{span}(\Delta\smallsetminus\theta)} \g_{-\alpha}.\]
In particular, $P_\emptyset = G$ and $P_\Delta$ is a minimal parabolic subgroup of~$G$.\footnote{This is the same convention as in \cite{GGKW_anosov}, but the opposite convention to \cite{Guichard_Wienhard_DoD}.}
Any parabolic subgroup of~$G$ is conjugate to~$P_{\theta}$ for some $\theta\subset\Delta$.

\subsection{Anosov representations} \label{subsec:def-Ano}

The following definition of Anosov representations is not the original one from \cite{Labourie_anosov, Guichard_Wienhard_DoD}, but an equivalent one taken from \cite{GGKW_anosov}.

\begin{definition} \label{def:Anosov}
Let $\Gamma$ be a word hyperbolic group, with boundary at infinity $\partial_{\infty}\Gamma$.
Let $\theta\subset\Delta$ be a nonempty subset of the simple roots with $\theta=\theta^{\star}$.
A representation $\rho : \Gamma\to G$ is \emph{$P_{\theta}$-Anosov} if there exists a $\rho$-equivariant continuous boundary map 
\[ \xi : \partial_\infty \Gamma \to G/P_{\theta} \]
that is dynamics-preserving and transverse and if for any $\alpha\in\theta$,
 \begin{equation} \label{eqn:alpha_i-diverge}
 \lim_{\gamma\to\infty} \alpha(\mu(\rho(\gamma))) = +\infty.
 \end{equation}
\end{definition}

By \eqref{eqn:alpha_i-diverge} we mean that $\lim_{n\to +\infty} \alpha(\mu(\rho(\gamma_n))) = +\infty$ for any sequence $(\gamma_n)_{n\in\NN}$ of distinct elements of~$\Gamma$.
By \emph{dynamics-preserving} we mean that if $\eta$ is the attracting fixed point of some element $\gamma\in\Gamma$ in $\partial_{\infty}\Gamma$, then $\xi(\eta)$ is an attracting fixed point of $\rho(\gamma)$ in $G/P_{\theta}$.
By \emph{transverse} we mean that pairs of distinct points in \(\partial_\infty \Gamma\) are sent to transverse pairs in \(G/P_\theta\), \ie  to pairs belonging to the unique open $G$-orbit in $G/P_{\theta} \times G/P_{\theta}$ (for the diagonal action of~$G$).
The map $\xi$ is unique, entirely determined~by~$\rho$.

By \cite{Labourie_anosov, Guichard_Wienhard_DoD},  any $P_{\theta}$-Anosov representation is a quasi-isometric embedding; in particular, it has a discrete image and a finite kernel.
The set of $P_{\theta}$-Anosov representations is open in $\Hom(\Gamma,G)$.

\subsection{Uniform domination}

Let $\lambda: G \to \overline{\aaa}^+$ be the \emph{Lyapunov projection} of~$G$, \ie the projection induced by the Jordan decomposition: any $g\in G$ can be written uniquely as the commuting product $g = g_h g_e g_u$ of a hyperbolic, an elliptic, and a unipotent element (see \eg \cite[Th.\,2.19.24]{Eberlein}), and $\exp(\lambda(g))$ is the unique element of $\exp(\overline{\aaa}^+)$ in the conjugacy class of~$g_h$.
For any $g\in G$,
\begin{equation} \label{eqn:lambda-limit}
\lambda(g) = \lim_{n \to +\infty} \frac{1}{n} \mu(g^n).
\end{equation}
For any simple root $\alpha\in\Delta$, let $\omega_{\alpha}\in\aaa^{\ast}$ be the fundamental weight associated with~$\alpha$:
\[ 2\frac{\langle{ \omega_{\alpha}, \beta}\rangle}{\langle{\alpha, \alpha}\rangle} = \delta_{\alpha,\beta} \]
for all $\beta\in\Delta$, where $\langle{\cdot,\cdot}\rangle$ is a $W$-invariant inner product on~$\aaa^{\ast}$ and $\delta_{\cdot, \cdot}$ is the Kronecker symbol.
We shall use the following terminology from \cite{GGKW_anosov}.

\begin{definition} \label{def:unif-domin}
A representation $\rho_L : \Gamma\to\Aut_{\KK}(b)$ \emph{uniformly $\omega_{\alpha}$-dominates} a representation $\rho_R : \Gamma\to\Aut_{\KK}(b)$ if there exists $c<1$ such that for any $\gamma\in\Gamma$,
\[ \omega_{\alpha} (\lambda(\rho_R(\gamma))) \leq c \, \omega_{\alpha} (\lambda(\rho_L(\gamma))). \]
\end{definition}

\subsection{Anosov representations into $\Aut_{\KK}(b)$ and $\Aut_{\KK}(b\oplus -b)$}

Let $G=\Aut_{\KK}(b)$ where $b$ is a nondegenerate $\RR$-bilinear form on a $\KK$-vector space~$V$ as in Section~\ref{subsec:intro-compact-G}.

In all cases except when $\KK = \RR$ and $b$ is a symmetric bilinear form of signature $(n,n)$, the restricted root system is of type \(B_n\), \(C_n\), or \(BC_n\). We can choose $\Delta = \{\alpha_i(b) ~|~ 1\leq  i \leq n\}$ so that for any $1\leq i\leq n$ the parabolic subgroup $P_i(b) := P_{\{ \alpha_i(b)\}}$ is the stabilizer of an $i$-dimensional $b$-isotropic $\KK$-subspace of~$V$.
The space $\F_i(b)$ of $i$-dimensional $b$-isotropic $\KK$-subspaces of~$V$ identifies with $G/P_i(b)$.
We have $\alpha_i(b) = \alpha_i(b)^{\star}$ for all $1\leq i\leq n$.

In the case that $\KK=\RR$ and $b$ is a symmetric bilinear form of signature $(n,n)$, the restricted root system is of type \(D_n\). We can still choose $\Delta = \{\alpha_i(b) ~|~ 1\leq  i \leq n\}$ so that for any $1\leq i\leq n-2$ the parabolic subgroup $P_i(b) = P_{\{ \alpha_i(b)\}}$ is the stabilizer of an $i$-dimensional $b$-isotropic subspace of~$V$.
We have $\alpha_i(b) = \alpha_i(b)^{\star}$ for all $1\leq i\leq n-2$.
The parabolic subgroups $P_{n-1}(b) = P_{\{ \alpha_{n-1}(b)\}}$ and $P_n(b) = P_{\{ \alpha_n(b)\}}$ are both stabilizers of $n$-dimensional $b$-isotropic subspaces of~$V$, and are conjugate by some element $g\in\Aut_{\KK}(b)\smallsetminus\Aut_{\KK}(b)_0$.
The stabilizer of an $(n-1)$-dimensional $b$-isotropic subspace is conjugate to $P_{n-1}(b)\cap P_n(b) = P_{\{\alpha_{n-1}(b),\alpha_n(b)\}}$.

We shall use the following result.

\begin{lemma}[{\cite[Th.\,6.3]{GGKW_anosov}}] \label{lem:Ano-unif-domin}
For $\rho_L,\rho_R\in\Hom(\Gamma,\Aut_{\KK}(b))$, the representation $\rho_L \oplus \rho_R: \Gamma\to\Aut_\KK(b) \times \Aut_\KK(-b) \hookrightarrow \Aut_\KK(b\oplus -b)$ is $P_1(b\oplus -b)$-Anosov if and only
if one of the two representations $\rho_L$ or~$\rho_R$ is $P_1(b)$-Anosov and uniformly $\omega_{\alpha_1(b)}$-dominates the other.
\end{lemma}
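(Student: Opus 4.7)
The plan is to characterize both sides of the equivalence in terms of the Cartan projection of the block-diagonal product together with a boundary map, and to compare the two via a linear-algebra identity for singular values. Fix a maximal compact subgroup of $\Aut_\KK(b\oplus -b)$ of the form $K\times K$ where $K$ is maximal compact in $\Aut_\KK(b)$, compatibly with the block decomposition. Relative to the resulting Cartan decomposition, the singular values of $g_L\oplus g_R$ are the union, with multiplicities, of those of $g_L\in\Aut_\KK(b)$ and $g_R\in\Aut_\KK(-b)$, so that
\begin{equation*}
\omega_{\alpha_1(b\oplus -b)}\bigl(\mu(g_L\oplus g_R)\bigr) \,=\, \max\bigl(\omega_{\alpha_1(b)}(\mu(g_L)),\ \omega_{\alpha_1(b)}(\mu(g_R))\bigr),
\end{equation*}
and $\alpha_1(b\oplus -b)(\mu(g_L\oplus g_R))$ equals the log-gap between the two top singular values of the combined operator. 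Assuming $\omega_{\alpha_1(b)}(\mu(g_L))\geq\omega_{\alpha_1(b)}(\mu(g_R))$, this gap equals
\begin{equation*}
\omega_{\alpha_1(b)}(\mu(g_L)) - \max\bigl(\omega_{\alpha_1(b)}(\mu(g_R)),\ \omega_{\alpha_1(b)}(\mu(g_L))-\alpha_1(b)(\mu(g_L))\bigr).
\end{equation*}

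For the forward direction, assume $\rho_L$ is $P_1(b)$-Anosov with boundary map $\xi_L:\partial_\infty\Gamma\to\F_1(b)$ and uniformly $\omega_{\alpha_1(b)}$-dominates $\rho_R$. Then $\alpha_1(b)\circ\mu\circ\rho_L\to+\infty$ by Anosov-ness, and the multiplicative domination, combined with~\eqref{eqn:lambda-limit} and the quasi-isometric embedding property of~$\rho_L$, gives $\omega_{\alpha_1(b)}(\mu(\rho_L(\gamma)))-\omega_{\alpha_1(b)}(\mu(\rho_R(\gamma)))\to+\infty$; the displayed formulas above then force $\alpha_1(b\oplus -b)\circ\mu\circ(\rho_L\oplus\rho_R)\to+\infty$. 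The map $\xi:\eta\mapsto\xi_L(\eta)\oplus\{0\}$ takes values in $\F_1(b\oplus -b)$, is $\rho$-equivariant and continuous, is transverse on distinct pairs (transversality of $\xi_L(\eta)$ and $\xi_L(\eta')$ for~$b$ guarantees nondegenerate pairing for $b\oplus -b$ of their embeddings into $V\oplus\{0\}$), and is dynamics-preserving because strict $\omega_{\alpha_1(b)}$-domination places the top eigendirection of $\rho_L(\gamma)\oplus\rho_R(\gamma)$ inside $V\oplus\{0\}$ and equal to the top eigendirection of $\rho_L(\gamma)$.

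For the converse, assume $\rho_L\oplus\rho_R$ is $P_1(b\oplus -b)$-Anosov with boundary map $\xi$. The attracting eigenline of a block-diagonal element with distinct top singular values lies entirely in $V\oplus\{0\}$ or in $\{0\}\oplus V$ according to which of $\omega_{\alpha_1(b)}(\mu(\rho_L(\gamma)))$ and $\omega_{\alpha_1(b)}(\mu(\rho_R(\gamma)))$ is larger. Dynamics-preservation applied to $\gamma$-attracting fixed points therefore forces $\xi$ to take values in $\F_1(V\oplus\{0\})\cup\F_1(\{0\}\oplus V)$; the transversality requirement, together with the fact that any line in $V\oplus\{0\}$ and any line in $\{0\}\oplus V$ fail to be transverse for $b\oplus -b$ (their orthogonal intersection is never trivial), forces $\xi$ to land entirely in one of the two, say in $\F_1(V\oplus\{0\})\simeq\F_1(b)$. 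Then the induced map $\xi_L$ exhibits $\rho_L$ as $P_1(b)$-Anosov, and the identity of the first paragraph converts the divergence of $\alpha_1(b\oplus -b)\circ\mu\circ(\rho_L\oplus\rho_R)$ into both $\alpha_1(b)\circ\mu\circ\rho_L\to+\infty$ and a uniform additive gap between $\omega_{\alpha_1(b)}\circ\mu\circ\rho_L$ and $\omega_{\alpha_1(b)}\circ\mu\circ\rho_R$; this additive gap upgrades to the multiplicative inequality of Definition~\ref{def:unif-domin} via~\eqref{eqn:lambda-limit} and the quasi-isometric embedding of~$\rho_L$. The main obstacle I expect is this last bootstrapping step: promoting an a priori mere divergence of $\alpha_1(b\oplus -b)\circ\mu\circ(\rho_L\oplus\rho_R)$ to a uniform singular-value gap between $\rho_L(\gamma)$ and $\rho_R(\gamma)$ strong enough to localize the attracting eigenline in one factor for every $\gamma$, not merely asymptotically, and to yield the multiplicative Lyapunov inequality from its additive Cartan-projection counterpart.
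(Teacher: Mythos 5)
First, a remark on the comparison: the paper does not prove this lemma at all --- it is imported verbatim from \cite[Th.\,6.3]{GGKW_anosov} --- so there is no internal argument to measure yours against, and your attempt must be judged on its own. On that basis, your structural skeleton is right and matches what one would expect: the singular values of $g_L\oplus g_R$ (with respect to a compatible maximal compact subgroup) are the sorted union of those of $g_L$ and $g_R$, giving your identity for $\alpha_1(b\oplus -b)(\mu(g_L\oplus g_R))$; the boundary map in the forward direction is $\xi_L\oplus\{0\}$, and its equivariance, continuity, transversality and dynamics-preservation are verified correctly; in the converse direction, the localization of $\xi$ inside a single factor via the total non-transversality of lines of $V\oplus\{0\}$ against lines of $\{0\}\oplus V$ is also correct (for non-elementary $\Gamma$, using density of attracting fixed points).

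The genuine gap is the one you flag in your last sentence, and it is not a loose end to be tidied up but the technical core of the statement: passing between the Lyapunov-projection inequality of Definition~\ref{def:unif-domin} and the Cartan-projection estimates demanded by Definition~\ref{def:Anosov}. In the forward direction you need $\omega_{\alpha_1(b)}(\mu(\rho_L(\gamma)))-\omega_{\alpha_1(b)}(\mu(\rho_R(\gamma)))\to+\infty$ starting from $\omega_{\alpha_1(b)}(\lambda(\rho_R(\gamma)))\leq c\,\omega_{\alpha_1(b)}(\lambda(\rho_L(\gamma)))$. This does \emph{not} follow from \eqref{eqn:lambda-limit} together with the quasi-isometric embedding property of~$\rho_L$: submultiplicativity of the top singular value only gives $\omega_{\alpha_1(b)}(\lambda(g))\leq\omega_{\alpha_1(b)}(\mu(g))$, which is the wrong direction for~$\rho_R$, and $\rho_R$ is an arbitrary representation, so $\omega_{\alpha_1(b)}(\mu(\rho_R(\gamma)))$ may a priori exceed $\omega_{\alpha_1(b)}(\lambda(\rho_R(\gamma)))$ by an amount growing with $|\gamma|$ (unipotent distortion). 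The converse has the mirror-image problem --- upgrading the pointwise strict inequality of top eigenvalue moduli, which your eigenline argument does give, to a uniform constant $c<1$ --- plus the unaddressed point that which block is $\mu$-dominant could a priori depend on~$\gamma$ even when the $\lambda$-dominant block does not. Closing the gap requires an additional input from the geometry of hyperbolic groups (for instance, that every $\gamma\in\Gamma$ becomes, after multiplication by an element of uniformly bounded word length, an element whose stable translation length is within a uniform additive constant of its word length), combined with the subadditivity of $\omega_{\alpha_1(b)}\circ\mu$ under products; this is exactly the ingredient supplied in the proof of \cite[Th.\,6.3]{GGKW_anosov} and absent from your argument.
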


Since the boundary map of an Anosov representation is dynamics-preserving, Lemma~\ref{lem:Ano-unif-domin} immediately implies the following.

\begin{corollary} \label{cor:boundarymap}
If $\rho_L \oplus \rho_R : \Gamma\to\Aut_\KK(b) \times \Aut_\KK(-b) \hookrightarrow \Aut_\KK(b\oplus -b)$ is $P_1(b\oplus -b)$-Anosov, then its boundary map 
\[ \xi: \partial_\infty \Gamma \longrightarrow \F_1(b\oplus -b) \]
is, up to switching $\rho_L$ and~$\rho_R$, the composition of the boundary map\linebreak $\xi_L :\nolinebreak \partial_{\infty} \Gamma \rightarrow\nolinebreak\F_1(b)$ of~$\rho_L$ with the natural embedding $\F_1(b) \hookrightarrow \F_1(b\oplus -b)$.
\end{corollary}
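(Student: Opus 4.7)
The plan is to use the uniqueness of the boundary map of a $P_1(b\oplus -b)$-Anosov representation: if one can exhibit any continuous, equivariant, transverse, dynamics-preserving map $\xi' : \partial_\infty\Gamma \to \F_1(b\oplus -b)$, then $\xi'=\xi$ automatically. My candidate is $\xi' := \iota \circ \xi_L$, where $\iota : \F_1(b) \hookrightarrow \F_1(b\oplus -b)$ sends $\ell$ to $\ell \oplus \{0\}$, and $\xi_L : \partial_\infty\Gamma \to \F_1(b)$ is the boundary map of $\rho_L$ — taken, after switching $\rho_L$ and $\rho_R$ if needed, to be the $P_1(b)$-Anosov factor uniformly $\omega_{\alpha_1(b)}$-dominating $\rho_R$, as provided by Lemma~\ref{lem:Ano-unif-domin}.

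The easy verifications are continuity, equivariance, and transversality. Continuity is immediate from continuity of $\xi_L$ and $\iota$. The map $\xi'$ is $(\rho_L\oplus\rho_R)$-equivariant because $\rho_R(\gamma)$ acts trivially on the $\{0\}$-factor. For transversality, two lines of the form $\ell_1\oplus\{0\}$ and $\ell_2\oplus\{0\}$ lie in the unique open $\Aut_\KK(b\oplus -b)$-orbit in $\F_1(b\oplus -b) \times \F_1(b\oplus -b)$ iff $(b\oplus -b)(\ell_1, \ell_2) = b(\ell_1,\ell_2) \neq 0$, which is exactly the transversality condition for $\xi_L$ in~$\F_1(b)$.

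The key step is the dynamics-preserving property. For an infinite-order $\gamma \in \Gamma$ with attracting fixed point $\eta^+ \in \partial_\infty\Gamma$, the line $\xi_L(\eta^+) \subset V$ is the unique attracting eigenline of $\rho_L(\gamma)$ on~$V$. Identifying $\omega_{\alpha_1(b)}(\lambda(g))$ with the logarithm of the spectral radius of $g\in\Aut_\KK(b)$ acting on the standard representation~$V$ (a standard fact for classical groups), the uniform domination inequality $\omega_{\alpha_1(b)}(\lambda(\rho_R(\gamma)))\leq c\,\omega_{\alpha_1(b)}(\lambda(\rho_L(\gamma)))$ with $c<1$ combined with the strict positivity $\omega_{\alpha_1(b)}(\lambda(\rho_L(\gamma)))>0$ — which follows from the Anosov condition \eqref{eqn:alpha_i-diverge} applied to the sequence $(\gamma^n)_{n\in\NN}$ together with \eqref{eqn:lambda-limit} — yields that the spectral radius of $\rho_L(\gamma)$ on $V$ strictly exceeds that of $\rho_R(\gamma)$. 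Hence the unique attracting eigenline of the block-diagonal element $(\rho_L\oplus\rho_R)(\gamma)$ on $V\oplus V$ is contained in $V\oplus\{0\}$ and equals $\xi_L(\eta^+)\oplus\{0\} = \xi'(\eta^+)$, which proves the dynamics-preserving property.

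The main subtlety is the interpretation of the fundamental weight $\omega_{\alpha_1(b)}$ as the log-spectral-radius on the standard representation, which is what turns uniform $\omega_{\alpha_1(b)}$-domination into strict comparison of spectral radii on~$V$; once this is granted, uniqueness of Anosov boundary maps concludes that $\xi = \iota\circ\xi_L$, establishing the claimed statement.
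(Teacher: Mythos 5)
Your proof is correct and follows essentially the same route as the paper, which derives the corollary in one line from Lemma~\ref{lem:Ano-unif-domin} and the dynamics-preserving property of boundary maps; you are simply filling in the details of that deduction (uniqueness of the Anosov boundary map, plus identification of the attracting line of the block-diagonal element $(\rho_L\oplus\rho_R)(\gamma)$ via the interpretation of $\omega_{\alpha_1(b)}$ as the log-spectral-radius on~$V$). One small slip in the justification: the positivity $\omega_{\alpha_1(b)}(\lambda(\rho_L(\gamma)))>0$ does \emph{not} follow from \eqref{eqn:alpha_i-diverge} and \eqref{eqn:lambda-limit} alone, since $\alpha_1(b)(\mu(\rho_L(\gamma^n)))$ could a priori diverge sublinearly in~$n$, in which case the limit in \eqref{eqn:lambda-limit} would only give $\alpha_1(b)(\lambda(\rho_L(\gamma)))\geq 0$. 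This is harmless, however: the dynamics-preserving property of~$\xi_L$ --- which you invoke in the same sentence when you say that $\xi_L(\eta^+)$ is the unique attracting eigenline of $\rho_L(\gamma)$ --- already forces $\rho_L(\gamma)$ to be proximal in $\F_1(b)$, \ie $\alpha_1(b)(\lambda(\rho_L(\gamma)))>0$, and since $\omega_{\alpha_1(b)}\geq\alpha_1(b)$ on $\overline{\aaa}^+$ this gives the strict positivity you need. With that repair the spectral-radius comparison, and hence the whole argument, goes through.
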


We will always be able to reduce to $P_1(b)$-Anosov representations into $\Aut_{\KK}(b)$ using the following result. 

\begin{proposition}[{\cite[Prop.\,4.8 \& \S\,6.3]{GGKW_anosov}}] \label{prop:makeP1}
For any real reductive Lie group $G$ and any nonempty subset $\theta\subset\Delta$ of the simple roots, there exist a nondegenerate bilinear form~$b$ on a real vector space~$V$ and an irreducible linear representation $\tau: G \to \Aut_{\RR}(b)$ with the following properties: 
\begin{enumerate}
  \item \label{item:Ano-P-P1} an arbitrary representation $\rho : \Gamma \to G$ is $P_{\theta}$-Anosov if and only if the composition $\tau \circ \rho : \Gamma \to\nolinebreak \Aut_{\RR}(b)$ is $P_1(b)$-Anosov.
  \item \label{item:domin-P-P1} if a representation $\rho_L : \Gamma\to G$ uniformly $\omega_{\alpha}$-dominates another representation $\rho_R : \Gamma\to G$ for all $\alpha\in\theta$, then $\tau\circ\rho_L : \Gamma\to\Aut_{\RR}(b)$ uniformly $\omega_{\alpha_1(b)}$-dominates $\tau\circ\rho_R : \Gamma\to\Aut_{\RR}(b)$.
\end{enumerate}
\end{proposition}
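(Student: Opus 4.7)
The plan is to construct $\tau$ via highest weight theory. Pick a dominant weight $\chi = \sum_{\alpha\in\theta} n_{\alpha}\,\omega_{\alpha}$ strictly supported on $\theta$, with $n_\alpha\in\NN_{>0}$; since $\theta=\theta^{\star}$, arrange in addition that $n_{\alpha} = n_{\alpha^{\star}}$, so that $\chi^{\star}=\chi$. Let $V_\CC$ be the irreducible complex representation of $G_\CC$ of highest weight~$\chi$. General highest weight theory shows that the stabilizer in $G$ of the highest weight line $\ell_\chi\subset V_\CC$ is exactly~$P_\theta$, while the identity $\chi^{\star}=\chi$ makes $V_\CC$ self-dual, hence it carries a nondegenerate $G$-invariant bilinear form; the line $\ell_\chi$ is automatically isotropic since $\chi\neq 0$ (pairing the weight space $V_\chi$ nontrivially with itself would force the weights $\chi$ and $-\chi$ to coincide). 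Taking a real form of $V_\CC$ (or restricting scalars in the remaining cases) yields the required irreducible real representation $\tau\colon G\to\Aut_\RR(b)$.

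For \eqref{item:Ano-P-P1}, the $G$-equivariant orbit map $gP_\theta\mapsto g\cdot\ell_\chi$ realizes $G/P_\theta$ as a closed $G$-orbit in $\F_1(b)$; transversality on both sides reduces to complementarity of the highest and lowest weight lines, which are exchanged by~$w_0$, so the orbit map preserves transversality. Composing the boundary map of $\rho$ with this orbit map gives a dynamics-preserving, transverse, $\tau\circ\rho$-equivariant map $\partial_\infty\Gamma\to\F_1(b)$, and the converse direction is enforced by uniqueness of the Anosov boundary map. To transfer the divergence condition, I invoke the key weight computation: the sub-highest weights of $\tau$ are exactly $\chi-\alpha$ for $\alpha\in\theta$ (since $\langle\chi,\alpha^\vee\rangle>0$ if and only if $\alpha\in\theta$), so
\[ \alpha_1(b)\bigl(\mu(\tau(g))\bigr) \;=\; \chi\bigl(\mu(g)\bigr)-\max_{\nu<\chi}\nu\bigl(\mu(g)\bigr) \]
is squeezed between positive multiples of $\min_{\alpha\in\theta}\alpha(\mu(g))$. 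Hence $\alpha(\mu(\rho(\gamma)))\to+\infty$ for every $\alpha\in\theta$ if and only if $\alpha_1(b)(\mu(\tau\circ\rho(\gamma)))\to+\infty$, giving the equivalence.

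For \eqref{item:domin-P-P1}, I use that $\omega_{\alpha_1(b)}\circ\lambda$ is the logarithm of the spectral radius on $\Aut_\RR(b)$, so by highest weight theory and \eqref{eqn:lambda-limit}, $\omega_{\alpha_1(b)}(\lambda(\tau(g)))=\chi(\lambda(g))=\sum_{\alpha\in\theta} n_\alpha\,\omega_\alpha(\lambda(g))$ for every $g\in G$. Since the functions $\omega_\alpha\circ\lambda$ are nonnegative and the $n_\alpha$ positive, inequalities $\omega_\alpha(\lambda(\rho_R(\gamma)))\leq c_\alpha\,\omega_\alpha(\lambda(\rho_L(\gamma)))$ with $c_\alpha<1$ for each $\alpha\in\theta$ sum to $\chi(\lambda(\rho_R(\gamma)))\leq c\,\chi(\lambda(\rho_L(\gamma)))$ with $c=\max_{\alpha\in\theta}c_\alpha<1$, which is exactly the $\omega_{\alpha_1(b)}$-domination of $\tau\circ\rho_R$ by $\tau\circ\rho_L$.

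The main obstacle is the reality-and-irreducibility step: producing a real irreducible $\tau$ carrying an explicit nondegenerate invariant bilinear form $b$, without losing the strict $\theta$-support of~$\chi$. Depending on whether $V_\CC$ is of real, complex, or quaternionic type, and on whether its invariant form is symmetric or skew, one may have to take a real form of $V_\CC$, pass to its underlying real vector space, or work with $V_\CC\otimes_\CC V_\CC$ (doubling~$\chi$), and check case by case that irreducibility over $\RR$ and nondegeneracy of~$b$ are preserved. Once this bookkeeping is in place, the weight-theoretic and Cartan-decomposition computations above make \eqref{item:Ano-P-P1} and \eqref{item:domin-P-P1} formal.
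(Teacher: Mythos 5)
First, note that the paper does not prove this proposition: it is quoted from \cite[Prop.\,4.8 \& \S\,6.3]{GGKW_anosov}, with the existence part going back to \cite[\S\,4]{Guichard_Wienhard_DoD}. Your overall strategy --- a self-dual irreducible representation whose highest restricted weight $\chi$ is supported on~$\theta$, the squeeze of $\alpha_1(b)(\mu(\tau(g)))$ between positive multiples of $\min_{\alpha\in\theta}\alpha(\mu(g))$ for~\eqref{item:Ano-P-P1}, and the identity $\omega_{\alpha_1(b)}(\lambda(\tau(g)))=\chi(\lambda(g))$ followed by summation for~\eqref{item:domin-P-P1} --- is the right one and matches those references; the argument for~\eqref{item:domin-P-P1} is correct as written.

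There is, however, a genuine gap in the construction of~$\tau$, and it is exactly the condition the paper records immediately after the statement: the \emph{restricted} weight space of~$\tau$ corresponding to~$\chi$ must be a \emph{line}. You choose $\chi=\sum_{\alpha\in\theta}n_\alpha\omega_\alpha$ in terms of fundamental weights of the \emph{restricted} root system and pass to ``the'' irreducible $G_{\CC}$-representation of highest weight~$\chi$, but the highest weight of a $G_{\CC}$-representation lives on a full Cartan subalgebra, not on~$\aaa$: one must choose a lift $\tilde\chi$ with $\tilde\chi|_{\aaa}=\chi$, and several complex simple roots may restrict to the same~$\alpha$, to~$2\alpha$, or to~$0$. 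If some compact simple root $\tilde\alpha$ (one restricting to $0$ on~$\aaa$) satisfies $\langle\tilde\chi,\tilde\alpha^{\vee}\rangle>0$, then $\tilde\chi-\tilde\alpha$ is a weight of $V_{\CC}$ restricting to~$\chi$, so the restricted weight space of~$\chi$ has dimension at least~$2$. In that case the top singular value of $\tau(g)$ has multiplicity at least~$2$ for every $g\in G$, the identity $\alpha_1(b)(\mu(\tau(g)))=\chi(\mu(g))-\max_{\nu<\chi}\nu(\mu(g))$ on which your proof of~\eqref{item:Ano-P-P1} rests is false (the left-hand side is identically bounded), $\tau\circ\rho$ is never $P_1(b)$-Anosov, and the stabilizer in~$G$ of the complex highest weight line is a proper subgroup of~$P_\theta$, so the claimed embedding $G/P_\theta\hookrightarrow\F_1(b)$ breaks down. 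The fix is precisely Tits' theorem on proximal representations as used in \cite[\S\,4]{Guichard_Wienhard_DoD}: for each $\alpha\in\Delta$ there is an irreducible representation whose highest restricted weight is a positive multiple of~$\omega_\alpha$ \emph{with one-dimensional restricted weight space}, and one forms the Cartan product over $\alpha\in\theta$ (symmetrized to ensure self-duality). Your closing paragraph identifies the realness/self-duality bookkeeping as the remaining obstacle, but the multiplicity of the highest restricted weight is the actual crux, and it is not addressed. A secondary soft spot: in the converse direction of~\eqref{item:Ano-P-P1} you appeal only to uniqueness of the Anosov boundary map, whereas one must first show that the boundary map of a $P_1(b)$-Anosov representation $\tau\circ\rho$ takes values in the closed orbit $\tau(G)\cdot\ell_\chi\simeq G/P_\theta$ before it can be pulled back to $G/P_\theta$.
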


The existence of such $b$ and~$\tau$ satisfying \eqref{item:Ano-P-P1} was first proved in \cite[\S\,4]{Guichard_Wienhard_DoD}.
In fact, the irreducible representations $\tau$ satisfying \eqref{item:Ano-P-P1} and \eqref{item:domin-P-P1} are exactly those for which the highest restricted weight $\chi$ of~$\tau$ satisfies
\[ \{ \alpha\in\Delta \mid \langle\alpha,\chi\rangle > 0\} = \theta \cup \theta^{\star} \]
and for which the weight space corresponding to~$\chi$ is a line; there are infinitely many such~$\tau$.

\begin{example} \label{ex:Ad-GL}
For $G=\GL_n(\RR)$ and $\theta=\{ \varepsilon_1-\varepsilon_2\}$, we can take $\tau$ to be the adjoint representation $\mathrm{Ad} : G\to\GL_{\RR}(\g)$ and $b$ to be the Killing form of~$\g$.
\end{example}

\subsection{Domains of discontinuity} 
We shall use the following result.

\begin{proposition}[{\cite[Th.\,8.6]{Guichard_Wienhard_DoD}}] \label{prop:dod_opq}
Let $\Gamma$ be a word hyperbolic group.
\begin{enumerate}
  \item \label{item:dod1} For any $P_1(b)$-Anosov representation $\rho: \Gamma \to \Aut_\KK(b)$ with boundary map $\xi: \partial_\infty \Gamma \to \F_1(b)$, the group $\Gamma$ acts properly discontinuously and cocompactly, via~$\rho$, on the complement $\Omega$ in $\F_n(b)$ of
\[ \specialK_{\xi} := \bigcup_{\eta \in \partial_\infty \Gamma} \{ W\in \F_n(b) \, |\, \xi(\eta) \subset W\} \subset \F_n(b). \]
  \item \label{item:dod2} Suppose we are \emph{not} in the case that $\KK=\RR$ and $b$ is a symmetric bilinear form of signature $(n,n)$.
  For any $P_n(b)$-Anosov representation $\rho: \Gamma \to\nolinebreak\Aut_\KK(b)$ with boundary map $\xi: \partial_\infty \Gamma \to \F_n(b)$, the group $\Gamma$ acts properly discontinuously and cocompactly, via~$\rho$, on the complement $\Omega$ in $\F_1(b)$ of
\[ \specialK_{\xi} := \bigcup_{\eta \in \partial_\infty \Gamma} \{ \ell\in \F_1(b) \, |\, \ell \subset \xi(\eta) \}  \subset \F_1(b). \]
\end{enumerate}
\end{proposition}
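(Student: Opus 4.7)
The plan is to deduce both parts from the general domain-of-discontinuity construction of \cite{Guichard_Wienhard_DoD}: given a $P_\theta$-Anosov representation $\rho:\Gamma\to G$ with boundary map $\xi$, any parabolic subgroup $Q\subset G$, and any \emph{balanced ideal} $I$ in the set of relative positions $W_\theta\backslash W/W_Q$ (that is, an ideal for the Bruhat order whose complement equals the image of~$I$ under the opposition involution), the group $\rho(\Gamma)$ acts properly discontinuously and cocompactly on the complement in $G/Q$ of the closed set
\[ \bigcup_{\eta\in\partial_\infty\Gamma}\,\mathcal{B}_I(\xi(\eta)), \]
where $\mathcal{B}_I(\xi(\eta))\subset G/Q$ is the union of the relative-position strata of~$\xi(\eta)$ indexed by~$I$. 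So it suffices, in each case, to exhibit a balanced ideal $I$ whose associated bad set coincides with the $\specialK_\xi$ of the statement.

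For part~(1), I would take $\theta=\{\alpha_1(b)\}$ and $Q=P_n(b)$, so that $G/P_\theta=\F_1(b)$ and $G/Q=\F_n(b)$. The $G$-invariant relation ``$\ell\subset W$'' on $\F_1(b)\times\F_n(b)$ cuts out a finite union of $G$-orbits, and hence a subset $I\subset W_{P_1(b)}\backslash W/W_{P_n(b)}$ for which $\bigcup_\eta \mathcal{B}_I(\xi(\eta))=\specialK_\xi$ by construction. Under the standing hypothesis the restricted root system of~$G$ is of type $B_n$, $C_n$, or~$BC_n$, in which $w_0=-\mathrm{id}$ on~$\aaa$, so $\alpha_1(b)^\star=\alpha_1(b)$ and the opposition involution on relative positions can be written down explicitly. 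It then remains a combinatorial verification, with Bruhat cells parametrized by signed permutations, that $I$ is both a Bruhat ideal and balanced.

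For part~(2), I would take $\theta=\{\alpha_n(b)\}$ and $Q=P_1(b)$ and run the dual argument, with the same incidence ``$\ell\subset W$'' now defining a subset $I'\subset W_{P_n(b)}\backslash W/W_{P_1(b)}$ whose associated bad set is the $\specialK_\xi$ of part~(2). The excluded case $\KK=\RR$, $b$ symmetric of signature $(n,n)$ is precisely the restricted root system~$D_n$: there the opposition involution swaps $\alpha_{n-1}(b)$ and~$\alpha_n(b)$, so $\{\alpha_n(b)\}\neq\{\alpha_n(b)\}^\star$, and Definition~\ref{def:Anosov} does not even produce $P_n(b)$-Anosov representations in the required sense.

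The main obstacle is the explicit verification that $I$ (resp.~$I'$) is balanced. The ideal property is comparatively easy: ``$\ell\subset W$'' is already a maximally degenerate incidence between an isotropic line and a maximal isotropic subspace, so any strictly more degenerate relative position is automatically bad. The genuinely nontrivial point is to check that the opposition involution interchanges bad and good (transverse) strata, \ie that the complement of $I$ equals its image under $w\mapsto w_0 w w_0^{-1}$. This is done case by case on the Weyl-group representatives of the $G$-orbits on $\F_1(b)\times\F_n(b)$, using the explicit action of~$w_0$ in the signed-permutation description. Once this combinatorial input is in place, both parts of the proposition follow from the general theorem of \cite{Guichard_Wienhard_DoD}.
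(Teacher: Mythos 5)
First, a point of reference: the paper does not prove this proposition at all. It is quoted verbatim from \cite[Th.\,8.6]{Guichard_Wienhard_DoD}, with the authors adding only the correction that type $D_n$ must be excluded from point~\eqref{item:dod2}. Your sketch therefore has to stand on its own, and two issues arise. The lesser one is attribution and potential circularity: the balanced-ideal formalism you invoke is not the construction of \cite{Guichard_Wienhard_DoD} (it is the later Kapovich--Leeb--Porti framework); the actual proof of Theorem~8.6 in \cite{Guichard_Wienhard_DoD} is direct, with properness obtained from explicit contraction estimates for the incidence relation $\ell\subset W$ and cocompactness from a separate topological argument. If the ``general theorem of \cite{Guichard_Wienhard_DoD}'' you appeal to is Theorem~8.6 itself, the argument is circular. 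Also, the combinatorial step you defer as ``the main obstacle'' is in fact immediate: in types $B_n$, $C_n$, $BC_n$ there are exactly two relative positions between $\F_1(b)$ and $\F_n(b)$ (namely $\ell\subset W$ and $\ell\cap W=\{0\}$), the incidence position is the closed one, and $w_0=-\mathrm{id}$ exchanges the two, so the singleton ideal is balanced with no case analysis needed.

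The genuine error is your treatment of the excluded case in point~\eqref{item:dod2}. You assert that for $\KK=\RR$ and $b$ of signature $(n,n)$ the opposition involution swaps $\alpha_{n-1}(b)$ and $\alpha_n(b)$, so that Definition~\ref{def:Anosov} produces no $P_n(b)$-Anosov representations. This is true only for $n$ odd. For $n$ even, $-\mathrm{id}$ lies in $W(D_n)$, hence $w_0=-\mathrm{id}$, $\alpha_n(b)^\star=\alpha_n(b)$, and $P_n(b)$-Anosov representations exist in abundance: the paper's own Section~\ref{sec:rem-Anosov} discusses $P_n(b)$ for type $D_n$ explicitly, and Example~\ref{ex:O(2,2)} exhibits $P_2(b_{\RR}^{2,2})$-Anosov representations of $\OO(2,2)$. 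So your argument does not explain why the $D_n$ case with $n$ even must be excluded; worse, the relative-position count and balancedness check go through formally there as well (there are again two positions, swapped by $w_0=-\mathrm{id}$), so your method, taken at face value, would appear to establish point~\eqref{item:dod2} in a case the proposition explicitly rules out. The true obstruction in type $D_n$ --- tied to the splitting of the maximal isotropics into two families and the breakdown of the cocompactness argument of \cite{Guichard_Wienhard_DoD}, which is exactly the correction the authors append --- is not detected by your sketch, and this gap would need to be closed before the argument could be accepted.
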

Contrary to what is stated in~\cite[Th.\,8.6]{Guichard_Wienhard_DoD}, the case of \(\OO(n,n)\) (\ie of a restricted root system of type \(D_n\)) has to be excluded in point~\eqref{item:dod2} of the proposition.

\section{Properly discontinuous actions on group manifolds} \label{sec:dod-GxG}

Let $G=\Aut_{\KK}(b)$ where $b$ is a nondegenerate $\RR$-bilinear form on a $\KK$-vector space~$V$ as in Section~\ref{subsec:intro-compact-G}.
By Theorem~\ref{thm:compactif-Aut(b)}, the $(G\times G)$-orbits in the space $\F_N(b\oplus -b)$ of maximal $(b\oplus -b)$-isotropic $\KK$-subspaces of~$V$ are the $\mathcal{U}_i  := \pi^{-1}(\F_i(b) \times \F_i(-b))$, for $0\leq i\leq n$, where
\[ \pi : \F_N(b\oplus -b) \longrightarrow \bigg(\bigcup_{i=0}^n \F_i(b)\bigg) \times \bigg(\bigcup_{i=0}^n \F_i(-b)\bigg) \]
is the projection defined by \eqref{eqn:pi}.
The following generalization of Theorem~\ref{thm:intro_comp_quotient} is an immediate consequence of Theorem~\ref{thm:compactif-Aut(b)}, Corollary~\ref{cor:boundarymap}, and Proposition~\ref{prop:dod_opq}.\eqref{item:dod1}.

\begin{theorem} \label{thm:compactification}
Let $\Gamma$ be a torsion-free word hyperbolic group and $\rho_L,\rho_R : \Gamma\to G=\Aut_{\KK}(b)$ two representations.
Suppose that $\rho_L$ is $P_1(b)$-Anosov and uniformly $\omega_{\alpha_1(b)}$-dominates $\rho_R$ (Definition~\ref{def:unif-domin}).
Then $\Gamma$ acts properly discontinuously, via $\rho_L \oplus \rho_R$, on $(G\times G)/\Diag(G)$.

Let $\xi_L : \partial_{\infty}\Gamma\to\F_1(b)$ be the boundary map of~$\rho_L$.
For any $0\leq i\leq n$, let $\specialK^i_{\xi_L}$ be the subset of $\F_i(b)$ consisting of subspaces $W$ containing $\xi_L(\eta)$ for some $\eta \in \partial_\infty \Gamma$, and let $\mathcal{U}^{\xi_L}_{i}$ be the complement in~$\mathcal{U}_i$ of $\pi^{-1}(\specialK^i_{\xi_L} \times \F_i(-b))$.
Then $\Gamma$ acts properly discontinuously and cocompactly, via $\rho_L \oplus \rho_R$, on the open subset
\[ \Omega := \bigcup_{i=0}^n\, \mathcal{U}_i^{\xi_L} \]
of $\F_N(b\oplus -b)$, and the quotient orbifold $(\rho_L \oplus \rho_R)(\Gamma) \backslash \Omega$ is a compactification of
\[ (\rho_L\oplus\rho_R)(\Gamma) \backslash (G \times G)/\Diag(G). \]
If $\Gamma$ is torsion-free, then this compactification is a smooth manifold.
\end{theorem}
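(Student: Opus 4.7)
The plan is to deduce everything from Proposition~\ref{prop:dod_opq}.\eqref{item:dod1} applied to the direct sum representation. First, I will observe that the hypotheses translate directly, via Lemma~\ref{lem:Ano-unif-domin}, into the statement that
\[
\rho_L \oplus \rho_R : \Gamma \longrightarrow \Aut_\KK(b)\times \Aut_\KK(-b) \hookrightarrow \Aut_\KK(b\oplus -b)
\]
is a $P_1(b\oplus -b)$-Anosov representation. By Corollary~\ref{cor:boundarymap}, after possibly swapping $\rho_L$ and $\rho_R$ (the hypothesis selects $\rho_L$ as the dominant factor so no swap is needed), the associated boundary map $\xi : \partial_\infty \Gamma \to \F_1(b\oplus -b)$ is the composition of $\xi_L : \partial_\infty \Gamma \to \F_1(b)$ with the natural inclusion $\F_1(b) \hookrightarrow \F_1(b\oplus -b)$ coming from $V\hookrightarrow V\oplus V$, $v\mapsto (v,0)$.

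Next, I will apply Proposition~\ref{prop:dod_opq}.\eqref{item:dod1} to $\rho_L\oplus\rho_R$: its domain of discontinuity is the complement in $\F_N(b\oplus -b)$ of
\[
\specialK_\xi \,=\, \bigcup_{\eta \in \partial_\infty \Gamma} \{W \in \F_N(b\oplus -b) \mid \xi(\eta) \subset W\}.
\]
The key identification is that this complement equals $\Omega = \bigcup_{i=0}^n \mathcal{U}_i^{\xi_L}$. Indeed, since $\xi(\eta) \subset V\oplus\{0\}$ for every $\eta$, the condition $\xi(\eta) \subset W$ amounts to $\xi(\eta) \subset W\cap(V\oplus\{0\})$; writing $W\in \mathcal{U}_i$ with $\pi(W) = (W_1,W_2)\in \F_i(b)\times \F_i(-b)$, this is equivalent to $W_1 \in \specialK^i_{\xi_L}$. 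Hence $W\in\specialK_\xi$ iff $\pi(W)$ lies in $\specialK^i_{\xi_L}\times \F_i(-b)$ for the unique $i$ with $W\in \mathcal{U}_i$, which is exactly the complement of $\Omega$ by definition of $\mathcal{U}_i^{\xi_L}$. Proposition~\ref{prop:dod_opq}.\eqref{item:dod1} then yields both the proper discontinuity and cocompactness of the $\Gamma$-action on $\Omega$.

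For the properly discontinuous action on the group manifold, note that by Theorem~\ref{thm:compactif-Aut(b)} the open orbit $\mathcal{U}_0$ identifies $(G\times G)$-equivariantly with $(G\times G)/\Diag(G)$, and $\mathcal{U}_0\subset\Omega$ since $\specialK^0_{\xi_L}=\emptyset$ (there are no $0$-dimensional subspaces containing a given line). Thus the action on $(G\times G)/\Diag(G)$ is properly discontinuous as a restriction of a properly discontinuous action. Furthermore $\mathcal{U}_0$ is an open dense $\Gamma$-invariant subset of $\Omega$ (density follows from Theorem~\ref{thm:compactif-Aut(b)}, which states $\overline{\mathcal{U}_0} = X$), so $(\rho_L\oplus\rho_R)(\Gamma)\backslash\mathcal{U}_0$ is an open dense subset of the compact space $(\rho_L\oplus\rho_R)(\Gamma)\backslash\Omega$, giving the desired compactification.

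Finally, when $\Gamma$ is torsion-free the quotient is a smooth manifold: being Anosov, $\rho_L\oplus\rho_R$ is a quasi-isometric embedding and thus has finite kernel, which is trivial by torsion-freeness; point stabilizers for the $\Gamma$-action on $\Omega$ are finite by proper discontinuity, hence trivial, so $\Gamma$ acts freely on the smooth manifold $\Omega$ and the quotient inherits a smooth structure. I expect the main subtlety to be in the identification of $\specialK_\xi^c$ with $\Omega$, which depends crucially on Corollary~\ref{cor:boundarymap} (to locate the boundary map inside the first factor) and on the structure of the orbits $\mathcal{U}_i$ via the projection $\pi$ from Theorem~\ref{thm:compactif-Aut(b)}.
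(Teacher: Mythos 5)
Your proposal is correct and follows essentially the same route as the paper: reduce via Lemma~\ref{lem:Ano-unif-domin} and Corollary~\ref{cor:boundarymap} to the statement that $\rho_L\oplus\rho_R$ is $P_1(b\oplus -b)$-Anosov with boundary map landing in $\F_1(b)\subset\F_1(b\oplus -b)$, apply Proposition~\ref{prop:dod_opq}.\eqref{item:dod1}, identify the resulting domain of discontinuity with $\Omega$, and use the openness and density of $\mathcal{U}_0\simeq (G\times G)/\Diag(G)$ from Theorem~\ref{thm:compactif-Aut(b)}. Your explicit verification that the complement of $\specialK_\xi$ equals $\bigcup_i\mathcal{U}_i^{\xi_L}$ is a detail the paper leaves implicit, and it is carried out correctly.
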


Recall from Lemma~\ref{lem:Ano-unif-domin} that the condition that one of the representations $\rho_L$ or~$\rho_R$ be $P_1(b)$-Anosov and uniformly $\omega_{\alpha_1(b)}$-dominate the other is equivalent to the condition that
\[ \rho := \rho_L \oplus \rho_R : \Gamma \longrightarrow G \times G = \Aut_\KK(b) \times \Aut_\KK(-b) \longhookrightarrow \Aut_\KK(b\oplus -b) \]
be $P_1(b\oplus -b)$-Anosov \cite[Th.\,6.3]{GGKW_anosov}.

\begin{proof}[Proof of Theorem~\ref{thm:compactification}]
By Corollary~\ref{cor:boundarymap}, the boundary map $\xi: \partial_\infty \Gamma \to \F_1(b\oplus -b)$ of $\rho = \rho_L \oplus \rho_R$ is the composition of $\xi_L$ with the natural embedding $\F_1(b) \hookrightarrow \F_1(b\oplus -b)$.
By Proposition~\ref{prop:dod_opq}.\eqref{item:dod1}, the group $\Gamma$ acts properly discontinuously and cocompactly, via~$\rho$, on the open set~$\Omega$.
Note that $\Omega$ contains~$\mathcal{U}_0$, hence the action of $\Gamma$ on~$\mathcal{U}_0$ via~$\rho$ is properly discontinuous.
By Theorem~\ref{thm:compactif-Aut(b)}, the set $\mathcal{U}_0$ is an open and dense $(G \times G)$-orbit in $\F_N(b\oplus -b)$, isomorphic to $(G \times G)/\Diag(G)$.
Therefore, $\Gamma$ acts properly discontinuously via~$\rho$ on $(G\times\nolinebreak G)/\Diag(G)$ and $\rho(\Gamma) \backslash \mathcal{U}_0 \simeq \rho(\Gamma) \backslash (G\times G)/\Diag(G)$ is open and dense in the compact orbifold $\rho(\Gamma) \backslash \Omega$.
This orbifold is a manifold if $\Gamma$ is torsion-free.
\end{proof}

\begin{remark} \label{rem:proper-implies-Ano-rk1}
In the case that $\Aut_{\KK}(b)$ has real rank~$1$, \emph{all} properly discontinuous actions by a quasi-isometric embedding come from a pair of representations $(\rho_L,\rho_R)$ as in Theorem~\ref{thm:compactification}, by \cite[Th.\,6.3]{GGKW_anosov}.
For $\Aut_{\KK}(b)=\OO(2,1)$ we obtain compactifications of \emph{anti-de Sitter} $3$-manifolds, and for $\Aut_{\KK}(b)=\OO(3,1)$ of \emph{holomorphic Riemannian} complex $3$-manifolds of constant nonzero curvature.
We refer to \cite{Goldman_NonStddLortz, Ghys_HomoSL2C, TKobayashi_deformation, Salein, Kassel_these, Gueritaud_Kassel, Gueritaud_Kassel_Wolff, Deroin_Tholozan, Tholozan_domin, Danciger_Gueritaud_Kassel} for examples of such pairs $(\rho_L,\rho_R)$.
\end{remark}

\begin{corollary}[{\cite[Th.\,6.3, (1)\(\Rightarrow\)(6)]{GGKW_anosov}}] \label{cor:higher_rank}
Let $\Gamma$ be a word hyperbolic group, $G$ an arbitrary real reductive Lie group, and $\rho_L,\rho_R : \Gamma\to G$ two representations.
Let $\theta\subset\Delta$ be a nonempty subset of the simple roots of~$G$.
Suppose that $\rho_L$ is $P_{\theta}$-Anosov and uniformly $\omega_{\alpha}$-dominates~$\rho_R$ for all $\alpha\in\theta$.
Then the action of $\Gamma$ on $(G\times G)/\Diag(G)$ via $(\rho_L, \rho_R): \Gamma \to  G\times G$ is properly discontinuous.
\end{corollary}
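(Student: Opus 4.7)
The plan is to reduce Corollary~\ref{cor:higher_rank} to the special case $G=\Aut_{\RR}(b)$ and $\theta=\{\alpha_1(b)\}$ already handled by Theorem~\ref{thm:compactification}, by means of the plethysm construction of Proposition~\ref{prop:makeP1}.

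First, I would apply Proposition~\ref{prop:makeP1} to the given subset $\theta\subset\Delta$ to produce a nondegenerate bilinear form $b$ on a real vector space $V$ and an irreducible linear representation $\tau:G\to\Aut_{\RR}(b)$. By part~\eqref{item:Ano-P-P1} of that proposition, the composition $\tau\circ\rho_L:\Gamma\to\Aut_{\RR}(b)$ is $P_1(b)$-Anosov since $\rho_L$ is $P_\theta$-Anosov; by part~\eqref{item:domin-P-P1}, the domination hypothesis on $\theta$ upgrades to the statement that $\tau\circ\rho_L$ uniformly $\omega_{\alpha_1(b)}$-dominates $\tau\circ\rho_R$.

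Next, I would invoke Theorem~\ref{thm:compactification}, whose proper discontinuity conclusion ultimately rests on Proposition~\ref{prop:dod_opq}.\eqref{item:dod1} and does not require torsion-freeness of~$\Gamma$: it yields that $\Gamma$ acts properly discontinuously on $(\Aut_{\RR}(b)\times\Aut_{\RR}(b))/\Diag(\Aut_{\RR}(b))$ via the pair $(\tau\circ\rho_L,\tau\circ\rho_R)$.

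Finally, I would transfer proper discontinuity back to $(G\times G)/\Diag(G)$. Identifying $(G\times G)/\Diag(G)$ with $G$ via $[g_L,g_R]\mapsto g_L g_R^{-1}$, the $\Gamma$-action becomes $\gamma\cdot g=\rho_L(\gamma)\,g\,\rho_R(\gamma)^{-1}$, and similarly on $\Aut_{\RR}(b)$ via $(\tau\circ\rho_L,\tau\circ\rho_R)$. For any compact $K\subset G$, any $\gamma\in\Gamma$ with $\rho_L(\gamma)\,K\,\rho_R(\gamma)^{-1}\cap K\neq\emptyset$ satisfies, after applying the homomorphism~$\tau$, the relation
\[ \tau(\rho_L(\gamma))\,\tau(K)\,\tau(\rho_R(\gamma))^{-1}\cap\tau(K)\neq\emptyset, \]
and since $\tau(K)$ is compact in $\Aut_{\RR}(b)$, the previous step forces only finitely many such~$\gamma$ to exist. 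Hence the action on $(G\times G)/\Diag(G)$ is properly discontinuous.

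There is no real obstacle here: the argument is a clean reduction to Theorem~\ref{thm:compactification} through Proposition~\ref{prop:makeP1}. The only mildly delicate point is the last step, where $\tau$ need not be injective (its kernel may lie in the center of~$G$); this is handled by the observation that group homomorphisms send ``return sets'' for $K$ into ``return sets'' for $\tau(K)$, so that finiteness of the latter implies finiteness of the former.
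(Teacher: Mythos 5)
Your proposal is correct and follows essentially the same route as the paper: reduce via Proposition~\ref{prop:makeP1} to a $P_1(b)$-Anosov pair in $\Aut_{\RR}(b)$, apply Theorem~\ref{thm:compactification}, and pull proper discontinuity back to $(G\times G)/\Diag(G)$. The only (harmless) difference is that you spell out the last transfer step with an explicit return-set argument along the homomorphism~$\tau$, whereas the paper phrases it as restricting to the $(\tau(G)\times\tau(G))$-orbit of $(e,e)$; both correctly handle the fact that $\tau$ need not be injective, since proper discontinuity (unlike cocompactness) pulls back along a group homomorphism.
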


\begin{proof}
By Proposition~\ref{prop:makeP1}, there exist a nondegenerate bilinear form $b$ on a real vector space~$V$ and a linear representation $\tau: G \to \Aut_{\RR}(b)$ such that $\tau\circ\rho_L : \Gamma\to\Aut_{\RR}(b)$ is $P_1(b)$-Anosov and uniformly $\omega_{\alpha_1(b)}$-dominates $\tau\circ\rho_R$.
By Theorem~\ref{thm:compactification}, the action of~$\Gamma$ on
\[ (\Aut_{\RR}(b) \times \Aut_{\RR}(b))/\Diag(\Aut_{\RR}(b)) \]
via $\tau \circ \rho_L \oplus \tau \circ\nolinebreak \rho_R$ is properly discontinuous.
Since $(\tau(G) \times \tau(G)) / \Diag( \tau(G))$ embeds into $(\Aut_{\RR}(b) \times \Aut_{\RR}(b))/ \Diag(\Aut_{\RR}(b))$ as the $(\tau(G)\times \tau(G))$-orbit of $(e,e)$, the action of $\Gamma$ on $(\tau(G) \times \tau(G)) / \Diag(\tau(G))$ via $\tau\circ\rho_L \oplus \tau\circ\rho_R$ is also properly discontinuous.
Thus the action of $\Gamma$ on $(G\times G)/ \Diag(G)$ via $(\rho_L,\rho_R)$ is properly discontinuous.
\end{proof}

As above, the condition that one of the representations $\tau \circ \rho_L$ or $\tau\circ \rho_R$ be $P_1(b)$-Anosov and uniformly $\omega_{\alpha_1(b)}$-dominate the other is equivalent to the condition that
\[ \tau\circ\rho_L \oplus \tau\circ\rho_R : \Gamma \longrightarrow \Aut_{\KK}(b) \times \Aut_{\KK}(-b) \longhookrightarrow \Aut_{\KK}(b\oplus -b) \]
be $P_1(b\oplus -b)$-Anosov.

\begin{corollary} \label{cor:general_comp}
Let $\Gamma$ be a word hyperbolic group, $G$ an arbitrary real reductive Lie group, and $\rho_L,\rho_R : \Gamma\to G$ two representations of~$\Gamma$.
Let $b$ be a nondegenerate $\RR$-bilinear form on a $\KK$-vector space~$V$ as above, for $\KK=\RR$, $\CC$, or~$\bH$, and let $\tau: G \to \Aut_{\KK}(b)$ be a linear representation of~$G$ such that $\tau\circ\rho_L : \Gamma\to\Aut_{\KK}(b)$ is $P_1(b)$-Anosov and uniformly $\omega_{\alpha_1(b)}$-dominates $\tau\circ\rho_R$ (see Proposition~\ref{prop:makeP1} and Example~\ref{ex:Ad-GL}).
Let $\Omega$ be the cocompact domain of discontinuity of $(\tau\circ\rho_L \oplus \tau\circ\rho_R)(\Gamma)$ in $\F_N(b\oplus -b)$ provided by Proposition~\ref{prop:dod_opq}.\eqref{item:dod1}.
A compactification of
\[ (\tau\circ\rho_L \oplus \tau\circ\rho_R)(\Gamma) \backslash(\tau(G) \times \tau(G)) / \Diag( \tau(G)) \]
is given by its closure in $(\tau\circ\rho_L \oplus \tau\circ\rho_R)(\Gamma) \backslash \Omega$.
If $\tau : G\to\Aut_{\KK}(b)$ has compact kernel, this provides a compactification of $(\rho_L,\rho_R)(\Gamma) \backslash(G \times\nolinebreak G) / \Diag(G)$.
\end{corollary}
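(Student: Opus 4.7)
The plan is to apply Theorem~\ref{thm:compactification} directly to the pair $(\tau\circ\rho_L,\tau\circ\rho_R)$ of representations of $\Gamma$ into $\Aut_\KK(b)$. The hypotheses of the corollary match exactly those of Theorem~\ref{thm:compactification}: $\tau\circ\rho_L$ is $P_1(b)$-Anosov and uniformly $\omega_{\alpha_1(b)}$-dominates $\tau\circ\rho_R$. Therefore $\Gamma$ acts properly discontinuously and cocompactly on $\Omega \subset \F_N(b\oplus -b)$ via $\tau\circ\rho_L\oplus\tau\circ\rho_R$, and the orbifold $(\tau\circ\rho_L\oplus\tau\circ\rho_R)(\Gamma)\backslash\Omega$ is a compactification of $(\tau\circ\rho_L\oplus\tau\circ\rho_R)(\Gamma)\backslash\mathcal{U}_0$, where $\mathcal{U}_0 \simeq (\Aut_\KK(b)\times\Aut_\KK(b))/\Diag(\Aut_\KK(b))$.

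Next, following the same strategy as in the proof of Corollary~\ref{cor:higher_rank}, I would observe that $(\tau(G)\times\tau(G))/\Diag(\tau(G))$ embeds naturally in $\mathcal{U}_0$ as the $(\tau(G)\times\tau(G))$-orbit of the basepoint $(e,e)$. This orbit is invariant under the action of $\Gamma$ via $\tau\circ\rho_L\oplus\tau\circ\rho_R$, so the restricted action inherits proper discontinuity from the action on $\Omega$. Consequently, $(\tau\circ\rho_L\oplus\tau\circ\rho_R)(\Gamma)\backslash(\tau(G)\times\tau(G))/\Diag(\tau(G))$ embeds as a locally closed subset of the compact orbifold $(\tau\circ\rho_L\oplus\tau\circ\rho_R)(\Gamma)\backslash\Omega$, and its closure there is the announced compactification.

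For the final statement, when $K_\tau := \ker\tau$ is compact, $\tau : G \to \tau(G)$ is proper; under the identifications $(G\times G)/\Diag(G) \simeq G$ and $(\tau(G)\times\tau(G))/\Diag(\tau(G)) \simeq \tau(G)$, the induced map $G \to \tau(G)$ is a principal $K_\tau$-bundle, equivariant for the morphism $(\rho_L,\rho_R)(\Gamma)\to(\tau\circ\rho_L\oplus\tau\circ\rho_R)(\Gamma)$ (whose kernel is a finite group, being discrete and compact). Passing to $\Gamma$-quotients yields a compact-fiber map between the two quotients; this is enough to transfer proper discontinuity to the setting of $G$. A compactification of $(\rho_L,\rho_R)(\Gamma)\backslash(G\times G)/\Diag(G)$ is then obtained by extending this compact $K_\tau$-bundle over the closure constructed in the first statement.

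The main point to verify is the last step: canonically extending the $K_\tau$-bundle over the boundary added by the closure in $(\tau\circ\rho_L\oplus\tau\circ\rho_R)(\Gamma)\backslash\Omega$, so as to obtain a Hausdorff compact space containing $(\rho_L,\rho_R)(\Gamma)\backslash(G\times G)/\Diag(G)$ as an open dense subspace. Since $K_\tau$ is a compact Lie group, such an extension exists topologically; checking that it is compatible with the smooth (or orbifold) structure of the compactification is the only technical wrinkle, but it is completely standard for principal bundles with compact structure group.
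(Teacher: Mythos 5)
Your first two paragraphs are correct and are precisely the argument the paper intends (it gives no separate proof of this corollary): apply Theorem~\ref{thm:compactification} to the pair $(\tau\circ\rho_L,\tau\circ\rho_R)$, and note, as in the proof of Corollary~\ref{cor:higher_rank}, that $(\tau(G)\times\tau(G))/\Diag(\tau(G))$ sits inside $\mathcal{U}_0\subset\Omega$ as a locally closed $\Gamma$-invariant orbit, so that its closure in the compact Hausdorff quotient $(\tau\circ\rho_L\oplus\tau\circ\rho_R)(\Gamma)\backslash\Omega$ is a compactification of its quotient.

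The compact-kernel step, however, has a genuine gap. Writing $K_\tau=\Ker(\tau)$, you propose to extend the principal $K_\tau$-bundle $(G\times G)/\Diag(G)\to(\tau(G)\times\tau(G))/\Diag(\tau(G))$ over the set added by the closure, and you assert this is ``completely standard for principal bundles with compact structure group.'' It is not: a principal bundle over a dense open subset of a space need not extend over the whole space --- the pullback of the Hopf $\U(1)$-bundle to $\RR^3\smallsetminus\{0\}$ does not extend over $\RR^3$ --- and compactness of the structure group is irrelevant to such obstructions. Nor does a collar argument apply: the set added by the closure is a stratified semi-algebraic subset of positive codimension, not a collared manifold boundary. (A collar argument \emph{does} let you extend the bundle over the compact manifold-with-boundary obtained by deleting a tubular neighborhood of that set as in Proposition~\ref{prop:semialgebraic}, which recovers tameness of $(\rho_L,\rho_R)(\Gamma)\backslash(G\times G)/\Diag(G)$, but that is Theorem~\ref{thm:tame-G/Gamma-LR}, not the compactification-by-closure asserted here.) The device the authors actually use to deal with non-injective $\tau$, in the proof of Theorem~\ref{thm:tame-G/Gamma-LR}, is different and is what you should invoke: choose an auxiliary \emph{injective} linear representation $\tau':G\to\GL_{\RR}(V')$, let $\Gamma$ act properly discontinuously and cocompactly on $\Omega\times\F_{N'}(V'\oplus V')$ (the second factor being compact), and take the closure of the $(G\times G)$-orbit of a point $(u,u')$ whose stabilizer is exactly $\Diag(G)$; that orbit \emph{is} $(G\times G)/\Diag(G)$, and its closure in the compact quotient gives the desired compactification with no hypothesis on $\Ker(\tau)$. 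If one insists on staying inside $\F_N(b\oplus -b)$, as the corollary's wording suggests, then one really needs $\tau$ to be injective (or $K_\tau$ to act trivially on the relevant orbit): otherwise the natural map from $(\rho_L,\rho_R)(\Gamma)\backslash(G\times G)/\Diag(G)$ to the closure is not injective, so the closure cannot contain it as a dense open subset, and no argument along your lines can produce the statement as literally written.
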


In the special case where $\rho_R : \Gamma \to G$ is the trivial representation, the action of $\Gamma$ on $(G\times G)/\Diag(G)$ via $\rho_L \oplus \rho_R$ is the action of $\Gamma$ on~$G$ via left multiplication by $\rho_L$ and Corollary~\ref{cor:general_comp} yields, when $\tau$ has compact kernel, a compactification of $\rho_L(\Gamma)\backslash G \simeq (\rho_L(\Gamma)\times\{ e\})\backslash (G\times G)/\Diag(G)$.

We refer to Theorem~\ref{thm:tame-G/Gamma-LR} for the tameness of $(\rho_L,\rho_R)(\Gamma) \backslash(G \times G)/\Diag(G)$ for general $\rho_L,\rho_R$.

\section{Properly discontinuous actions on other\\ homogeneous spaces} \label{sec:proof-comp-general}

In this section we prove Theorem~\ref{thm:Hpq} and Proposition~\ref{prop:compactification_general}.
We first introduce some notation.
For $\KK=\RR$, $\CC$, or~$\bH$ and $p,q\in\NN$, we denote by $\KK^{p,q}$ the vector space $\KK^{p+q}$ endowed with the $\RR$-bilinear form $b_{\KK}^{p,q}$ of \eqref{eqn:bpq}, so that $\Aut_{\KK}(b_{\KK}^{p,q}) = \OO(p,q)$, $\U(p,q)$, or $\Sp(p,q)$.
We use $b_{\RR\otimes\CC}^{p,q}$ for the complex symmetric bilinear form extending $b_{\RR}^{p,q}$ on $\RR^{p+q} \otimes_{\RR}\CC$, so that $\Aut_{\CC}(b_{\RR\otimes\CC}^{p,q}) = \OO(p+q,\CC)$.
For \(m\in\NN\) and \(\KK=\RR\) or~\(\CC\), we denote by
\[\omega^{2m}_{\KK}:\,(x,y) \longmapsto x_1 y_{m+1} - x_{m+1} y_1 + \dots + x_m y_{2m} - x_{2m} y_m \]
the standard symplectic form on \(\KK^{2m}\), so that $\Aut_{\KK}(\omega_{\KK}^{2m})=\Sp(2m,\KK)$ for $\KK=\RR$ or~$\CC$.
Recall that a Hermitian form $h$ on a $\CC$-vector space~$V$ is completely determined by its real part~$b$: for any $v,v'\in V$,
\[ h(v,v') = b(v,v') - \sqrt{-1} \, b\big(v, \sqrt{-1}v'\big). \]
Similarly, an $\bH$-Hermitian form $h_{\bH}$ on an $\bH$-vector space~$V$ is completely determined by its complex part~$h$: for any $v,v'\in V$,
\[ h_{\bH}(v,v') = h(v,v') - h\big(v,v'j) \, j. \]
Thus an $\bH$-Hermitian form is completely determined by its real part.

\begin{proof}[Proof of Theorem~\ref{thm:Hpq} and of Proposition~\ref{prop:compactification_general} in cases (i), (ii), (iii) of Table~\ref{table2}]
\ (For $\KK=\RR$, see \cite[Prop.\,13.1]{Guichard_Wienhard_DoD}.)
For $p>q>0$, the group $G = \Aut_{\KK}(b_{\KK}^{p,q+1})$ acts transitively on the closed submanifold $\F_1(b_{\KK}^{p,q+1})$ of the smooth compact manifold $\F_1(b_{\KK}^{p+1,q+1})$, which has positive codimension.
The complement $\mathcal{U} = \F_1(b_{\KK}^{p+1,q+1}) \smallsetminus \F_1(b_{\KK}^{p,q+1})$ is open and dense in $\F_1(b_{\KK}^{p+1,q+1})$, and identifies with $\hat{\HH}_{\KK}^{p,q}$ since $G$ acts transitively on~$\mathcal{U}$ and the stabilizer in~$G$ of $[1:0:\ldots:0:1]\in\mathcal{U}\subset\PP(\KK^{p+1,q+1})$ is $H = \Aut_{\KK}(b_{\KK}^{p,q})$.
Thus $\F_1(b_{\KK}^{p+1,q+1})$ is a smooth compactification of $\hat{\HH}_{\KK}^{p,q}$.

Consider a Cartan subspace $\aaa'$ for $G' = \Aut_{\KK}(b_{\KK}^{p+1,q+1})$ that contains a Cartan subspace $\aaa$ for $G = \Aut_{\KK}(b_{\KK}^{p,q+1})$.
If $\KK=\RR$ and $p>q+1$, then $G$ and $G'$ both have restricted root systems of type $B_{q+1}$, hence the restriction of $\alpha_{q+1}(b_{\KK}^{p+1,q+1})$ to $\aaa$ is $\alpha_{q+1}(b_{\KK}^{p,q+1})$.
If $\KK=\CC$ or~$\bH$ and if $p>q+1$, then $G$ and $G'$ both have restricted root systems of type $(BC)_{q+1}$, hence the restriction of $\alpha_{q+1}(b_{\KK}^{p+1,q+1})$ to $\aaa$ is $\alpha_{q+1}(b_{\KK}^{p,q+1})$.
If $\KK=\CC$ or~$\bH$ and if $p=q+1$, then $G$ has a restricted root system of type $C_{q+1}$ and $G'$ of type $(BC)_{q+1}$, hence the restriction of $\alpha_{q+1}(b_{\KK}^{p+1,q+1})$ to $\aaa$ is $\frac{1}{2}\alpha_{q+1}(b_{\KK}^{p,q+1})$.
In all these cases, it follows from Definition~\ref{def:Anosov} that if $\rho: \Gamma\to G = \Aut_{\KK}(b_{\KK}^{p,q+1})$ is a $P_{q+1}(b_{\KK}^{p,q+1})$-Anosov representation with boundary map $\xi : \partial_\infty \Gamma \to \F_{q+1}(b_{\KK}^{p,q+1})$, then the composed representation \(\rho':\Gamma\to G\hookrightarrow G' = \Aut_{\KK}(b_{\KK}^{p+1,q+1})\) is \(P_{q+1}(b_{\KK}^{p+1,q+1})\)-Anosov and that its boundary map $\xi' :\nolinebreak \partial_\infty \Gamma \to\nolinebreak \F_{q+1}(b_{\KK}^{p+1,q+1})$ is the composition of $\xi$ with the natural inclusion $\F_{q+1}(b_{\KK}^{p,q+1}) \hookrightarrow \F_{q+1}(b_{\KK}^{p+1,q+1})$.
By Proposition~\ref{prop:dod_opq}.\eqref{item:dod2}, the group $\Gamma$ acts properly discontinuously and cocompactly, via~$\rho'$, on the complement $\Omega$ in $\F_1(b_{\KK}^{p+1,q+1})$ of
\[ \specialK_{\xi'} = \bigcup_{\eta \in \partial_\infty \Gamma} \{\ell \in \F_{1}(b_{\KK}^{p+1,q+1}) \mid  \ell \subset \xi'(\eta)\}. \]
By construction of~$\xi'$, we have $\specialK_{\xi'} \subset \F_1(b_{\KK}^{p,q+1})$.
Let $\mathcal{C}_\xi$ be the complement of~$\specialK_{\xi'}$ in $\F_1(b_{\KK}^{p,q+1})$.
Then $\Gamma$ acts properly discontinuously and cocompactly, via~$\rho'$, on $\Omega = \mathcal{U}\cup\mathcal{C}_{\xi} \simeq \hat{\HH}_{\KK}^{p,q}\cup\mathcal{C}_{\xi}$.
If $\Gamma$ is torsion-free, then $\rho'(\Gamma) \backslash (\hat{\HH}_{\KK}^{p,q} \cup \mathcal{C}_\xi)$ is a smooth compactification of $\rho(\Gamma) \backslash \hat{\HH}_{\KK}^{p,q}$.

Suppose now that $\KK=\RR$ and $p=q+1$.
Then $G$ has a restricted root system of type $D_{q+1}$ and $G'$ of type $B_{q+1}$, hence the restriction of $\alpha_{q+1}(b_{\KK}^{p+1,q+1})$ to $\aaa$ is $\frac{1}{2} (\alpha_{q+1}(b_{\KK}^{p,q+1}) - \alpha_q(b_{\KK}^{p,q+1}))$.
The boundary map $\xi : \partial_{\infty}\Gamma\to\F_{q+1}(b_{\KK}^{p,q+1})$ of~$\rho$ induces, by composition with the natural inclusion $\F_{q+1}(b_{\KK}^{p,q+1}) \hookrightarrow \F_{q+1}(b_{\KK}^{p+1,q+1})$, a continuous, equivariant, transverse boundary map $\xi' : \partial_\infty \Gamma \to \F_{q+1}(b_{\KK}^{p+1,q+1})$.
If the action of $\Gamma$ on $\hat{\HH}_{\KK}^{p,q}$ via~$\rho$ is properly discontinuous, then the properness criterion of Benoist \cite{Benoist_properness} and Kobayashi \cite{TKobayashi_proper} implies that
\[ \bigl(\alpha_{q+1}(b_{\KK}^{p,q+1}) - \alpha_q(b_{\KK}^{p,q+1})\bigr) (\mu(\rho(\gamma))) \underset{\gamma\to\infty}{\longrightarrow} +\infty \]
and, using \eqref{eqn:lambda-limit}, that $\xi'$ is dynamics-preserving.
Therefore, the composed representation \(\rho' : \Gamma\to G\hookrightarrow G' = \Aut_{\KK}(b_{\KK}^{p+1,q+1})\) is \(P_{q+1}(b_{\KK}^{p+1,q+1})\)-Anosov and we conclude as above.
\end{proof}

\begin{remark}
Identifying $\RR^{2n+2}$ with $\CC^{n+1}$ gives a $\U(n,1)$-equivariant identification~of $\hat{\HH}_{\RR}^{2n,2}$ with $\hat{\HH}_{\CC}^{n,1}$.\,Examples of $P_2(b_{\RR}^{2n,2})$-Anosov representations $\rho :\nolinebreak\Gamma\to\nolinebreak\OO(2n,2)$ inclu\-de the composition of any convex cocompact representation $\rho_1 :\nolinebreak\Gamma\to\nolinebreak\U(n,1)$ with the natural inclusion of $\U(n,1)$ into $\OO(2n,2)$; the manifold $\rho(\Gamma)\backslash\hat{\HH}_{\RR}^{2n,2}$ then identifies with $\rho_1(\Gamma)\backslash\hat{\HH}_{\CC}^{n,1}$, and the compactifications of these two manifolds given by Theo\-rem~\ref{thm:Hpq}.\eqref{item:compactif-quotient-Hpq} coincide.
The same holds if $(\hat{\HH}_{\RR}^{2n,2}, \hat{\HH}_{\CC}^{n,1}, \OO(2n,2), \U(n,1), P_2(b_{\RR}^{2n,2}))$ is replaced by $(\hat{\HH}_{\RR}^{4n,4}, \hat{\HH}_{\bH}^{n,1}, \OO(4n,4), \Sp(n,1), P_4(b_{\RR}^{4n,4}))$ or $(\hat{\HH}_{\CC}^{2n,2}, \hat{\HH}_{\bH}^{n,1}, \U(2n,2),\linebreak\Sp(n,1), P_2(b_{\CC}^{2n,2}))$.
\end{remark}

The following example shows that if $\KK=\RR$ and $p=q+1$, then the fact that $\rho : \Gamma\to G = \Aut_{\KK}(b_{\KK}^{p,q+1})$ is $P_{q+1}(b_{\KK}^{p,q+1})$-Anosov does not imply that the action of $\Gamma$ on $\hat{\HH}_{\KK}^{p,q}$ via~$\rho$ is properly discontinuous.

\begin{example} \label{ex:O(2,2)}
Let $\KK=\RR$ and $p=q+1=2$.
Then the identity component $G_0$ of $G=\OO(2,2)$ identifies with $\PSL_2(\RR)\times\PSL_2(\RR)$ and $\hat{\HH}_{\RR}^{2,2}$ is a covering of order two of $(\PSL_2(\RR)\times\PSL_2(\RR))/\Diag(\PSL_2(\RR))$.
A representation $\rho : \Gamma\to G_0$ is $P_2(b_{\RR}^{2,2})$-Anosov if and only if the projection of~$\rho$ to the first (or second, depending on the numbering of the simple roots) $\PSL_2(\RR)$ factor is convex cocompact.
However, the action of $\Gamma$ via $\rho$ is properly discontinuous on $\hat{\HH}_{\RR}^{2,2}$ if and only if the projection of $\rho$ to one $\PSL_2(\RR)$ factor is convex cocompact and uniformly dominates the other, by \cite[Th.\,6.3]{GGKW_anosov} (see Remark~\ref{rem:proper-implies-Ano-rk1}).
\end{example}

\begin{proof}[Proof of Proposition~\ref{prop:compactification_general} in case~(iv) of Table~\ref{table2}]
(For $q=1$, see \cite[Th.\,13.3]{Guichard_Wienhard_DoD}.) 
We identify $\CC^{p+q}$ with $\RR^{2p+2q}$ and see $H = \Aut_{\CC}(b_{\CC}^{p,q})$ as the subgroup of $G = \Aut_{\RR}(b_{\RR}^{2p,2q})$ commuting with the multiplication by \(\sqrt{-1}\), which we denote by $I\in G$.
The group $G$ acts transitively on the open set 
\[ \mathcal{U} := \{ W'\in \F_{p+q}(b_{\RR\otimes\CC}^{2p,2q}) \mid W' \cap \RR^{2p+2q} = \{0\}\}, \]
and the stabilizer in~$G$ of
\[ W'_0 := \big\{ x + \sqrt{-1}I(x) \mid x \in \RR^{2p+2q} \big\} \in \mathcal{U} \]
is~$H$.
Thus $\mathcal{U}$ identifies with $G/H$ and the closure $\overline{\mathcal{U}}$ of $\mathcal{U}$ in $\F_{p+q}(b_{\RR\otimes\CC}^{2p,2q}) \simeq G'/P'$ provides a compactification of $G/H$.

If \(\rho: \Gamma\to G\) is \(P_1(b_{\RR}^{2p,2q})\)-Anosov, with boundary map $\xi : \partial_\infty \Gamma \to \F_{1}(b_{\RR}^{2p,2q})$, then it easily follows from Definition~\ref{def:Anosov} that the composed representation \(\rho' : \Gamma\to G\hookrightarrow G' = \Aut_{\CC}(b^{2p,2q}_{\RR\otimes\CC})\) is $P_1(b_{\RR\otimes\CC}^{2p,2q})$-Anosov and that its boundary map $\xi' :\partial_\infty \Gamma \to \F_{1}(b_{\RR\otimes\CC}^{2p,2q})$ is the composition of~$\xi$ with the natural inclusion $\F_1(b_{\RR}^{2p,2q}) \hookrightarrow \F_1(b_{\RR\otimes\CC}^{2p,2q})$.
By Proposition~\ref{prop:dod_opq}.\eqref{item:dod1}, the group $\Gamma$ acts properly discontinuously and cocompactly, via~$\rho'$, on the complement $\Omega$ in \(\F_{p+q}(b_{\RR\otimes\CC}^{2p,2q})\) of 
\[ \specialK_{\xi'} = \bigcup_{\eta \in \partial_\infty \Gamma} \big\{ W' \in \F_{p+q}(b_{\RR\otimes\CC}^{2p,2q}) \mid  \xi'(\eta)\subset W'\big\} . \]
Note that $\Omega$ contains~$\mathcal{U}$, hence $\Gamma$ acts properly discontinuously on $G/H$ via~$\rho$ and the quotient $\rho'(\Gamma)\backslash (\Omega\cap\overline{\mathcal{U}})$ provides a compactification of $\rho(\Gamma)\backslash G/H$.
\end{proof}

\begin{proof}[Proof of Proposition~\ref{prop:compactification_general} in case~(v) of Table~\ref{table2}]
Let us write $\bH = \CC + \CC j$ where $j^2=-1$.
We identify $\bH^{p+q}$ with $\CC^{2p+2q}$ and see $H = \Aut_{\bH}(b_{\bH}^{p,q})$ as the subgroup of $G = \Aut_{\CC}(b_{\CC}^{2p,2q})$ commuting with the right multiplication by~$j$, which we denote by $J \in G$.
The tensor product \(\CC^{2p+2q} \otimes_\CC \bH\) can be realized as the set of ``formal'' sums
\[ \CC^{2p+2q} \otimes_\CC \bH =\{ v_1 + v_2 j \mid v_1,\, v_2 \in \CC^{2p+2q}\}, \]
on which \(\bH\) acts by right multiplication: \( (v_1+ v_2 j)\cdot z = z v_1 + \overline{z} v_2 j\) for $z\in\CC$ and \(( v_1 + v_2 j)\cdot j = -v_2 + v_1 j\).
Consider the $\CC$-Hermitian form $h$ on \(\CC^{2p+2q} \otimes_\CC \bH\) given by
\[ h( v_1 + v_2 j, v'_1 + v'_2 j) = b_{\CC}^{2p,2q}(v_1, v'_1) - \overline{b_{\CC}^{2p,2q}(v_2,v'_2)}, \]
and let $h_{\bH}$ be the $\bH$-Hermitian form on \(\CC^{2p+2q} \otimes_\CC \bH\) with complex part~$h$.
Then \(G' = \Sp(p+q,p+q)\) identifies with $\Aut_{\bH}(h_{\bH})$, and the natural embedding of $G = \Aut_{\CC}(b_{\CC}^{2p,2q})$ into~\(G'\) induces a natural embedding of \(\F_{1}(b_{\CC}^{2p,2q})\) into \(\F_{1}(h_\bH)\), given explicitly by
\[ \ell \longmapsto \{ v_1 + v_2 j \mid v_1, v_2 \in \ell\} \]
where $\ell$ is a $b_{\CC}^{2p,2q}$-isotropic line of~$\CC^{2p+2q}$.
The group $G$ acts transitively on the open set
\[ \mathcal{U} := \{ W'\in \F_{p+q}(h_{\bH}) \mid W'\cap \CC^{2p+2q} = \{0\}\}, \]
and the stabilizer in~$G$ of
\[ W'_0 := \{ v + (Jv)j \mid v\in \CC^{2p,2q}\} \in \mathcal{U} \]
is~$H$.
Thus $\mathcal{U}$ identifies with $G/H$ and the closure $\overline{\mathcal{U}}$ of $\mathcal{U}$ in $\F_{p+q}(h_{\bH})\simeq G'/P'$ provides a compactification of $G/H$.

If \(\rho: \Gamma \to G\) is \(P_1(b_{\CC}^{2p,2q})\)-Anosov, with boundary map $\xi : \partial_\infty \Gamma \to \F_{1}(b_{\CC}^{2p,2q})$, then it easily follows from Definition~\ref{def:Anosov} that the composed representation \(\rho': \Gamma \to G \hookrightarrow G'\) is \(P_1(h_\bH)\)-Anosov and that its boundary map $\xi' : \partial_\infty \Gamma \to \F_{1}(h_{\bH})$ is the composition of~$\xi$ with the natural inclusion $\F_1(b_{\CC}^{2p,2q}) \hookrightarrow \F_1(h_{\bH})$.
By Proposition~\ref{prop:dod_opq}.\eqref{item:dod1}, the group $\Gamma$ acts properly discontinuously and cocompactly, via~$\rho'$, on the complement $\Omega$ in \(\F_{p+q}(b_\bH)\) of
\[\specialK_{\xi'} = \bigcup_{\eta \in \partial_\infty \Gamma} \{ W' \in \F_{p+q}(b_\bH) \mid  \xi'(\eta) \subset W'\}. \]
Note that $\Omega$ contains~$\mathcal{U}$, hence $\Gamma$ acts properly discontinuously on $G/H$ via~$\rho$ and the quotient $\rho'(\Gamma)\backslash (\Omega\cap\overline{\mathcal{U}})$ provides a compactification of $\rho(\Gamma)\backslash G/H$.
\end{proof}

\begin{proof}[Proof of Proposition~\ref{prop:compactification_general} in case~(vi) of Table~\ref{table2}]
(For $p=0$, see \cite[\S\,12]{Guichard_Wienhard_DoD}.) 
Let $(e_1,\dots,e_{2m})$ be the standard basis of~$\CC^{2m}$.
The Hermitian form \(h (v,w) = i \omega^{2m}_{\CC}(\overline{v}, w)\) has signature \( (m,m)\).
For any $0\leq p\leq m$, the $\CC$-span of $e_1 - i e_{m+1}, \dots$, $e_p - i e_{m+p}, e_{p+1} + i e_{m+p+1}, \dots, e_m + i e_{2m}$ defines an element $W'_0 \in \F_m(\omega^{2m}_{\CC})$ (a Lagrangian of~$\CC^{2m}$) such that the restriction of $h$ to $W'_0\times W'_0$ is nondegenerate with signature $(p,m-p)$.
The stabilizer of $W'_0$ in $G = \Aut_{\RR}(\omega^{2m}_{\RR})$ is $H = \Aut_{\CC}(b_{\CC}^{p,m-p})$.
Thus the $G$-orbit $\mathcal{U}$ of~$W'_0$ identifies with $G/H$ and the closure $\overline{\mathcal{U}}$ of $\mathcal{U}$ in $\F_m(\omega^{2m}_{\CC})\simeq G'/P'$ provides a compactification of $G/H$.

If \( \rho: \Gamma \to G = \Aut_{\RR}(\omega^{2m}_{\RR})\) is $P_1(\omega^{2m}_{\RR})$-Anosov, with boundary map $\xi : \partial_\infty \Gamma \to \F_{1}(\omega^{2m}_{\RR}) \simeq \RR\PP^{2m-1}$, then it easily follows from Definition~\ref{def:Anosov} that the composed representation \(\rho' : \Gamma \to G \hookrightarrow G' = \Aut_{\CC}(\omega^{2m}_{\CC})\) is $P_1(\omega^{2m}_{\CC})$-Anosov and that its boundary map $\xi' : \partial_\infty \Gamma \to \F_{1}(\omega^{2m}_{\CC}) \simeq \CC\PP^{2m-1}$ is the composition of~$\xi$ with the natural inclusion $\RR\PP^{2m-1} \hookrightarrow \CC\PP^{2m-1}$.
By Proposition~\ref{prop:dod_opq}.\eqref{item:dod1}, the group $\Gamma$ acts properly discontinuously and cocompactly, via~$\rho'$, on the complement $\Omega$~in~\(\F_{m}(\omega^{2m}_{\CC})\)~of
\[ \specialK_{\xi'} = \bigcup_{\eta \in \partial_\infty \Gamma} \{ W' \in \F_m(\omega^{2m}_{\CC}) \, |\,  \xi'(\eta) \subset W'\}. \]
Note that $\Omega$ contains the $G$-invariant open set \[ \mathcal{U}' = \{ W' \in \F_m(\omega^{2m}_{\CC}) \, |\, W' \cap \RR^{2m} = \{ 0\}\}, \]
which itself contains $W'_0$, hence~$\mathcal{U}$.
Thus $\Gamma$ acts properly discontinuously on $G/H$ via~$\rho$ and the quotient $\rho'(\Gamma)\backslash (\Omega\cap\overline{\mathcal{U}})$ provides a compactification of $\rho(\Gamma)\backslash G/H$.
\end{proof}

We now use Remark~\ref{rem:lift-DoD} to compactify other reductive homogeneous spaces that are not affine symmetric spaces.

\begin{proposition} \label{prop:compactification_general_nonsym}
Let $(G, H, P, G', P', P'')$ be as in Table~\ref{table3}.
\begin{enumerate}
  \item \label{item:compactif_general_G/H} There exists an open $G$-orbit $\mathcal{U}$ in $G'/P''$ that is diffeomorphic to $G/H$; the closure $\overline{\mathcal{U}}$ of $\mathcal{U}$ in $G'/P''$ provides a compactification of $G/H$.
  \item For any word hyperbolic group~$\Gamma$ and any $P$-Anosov representation $\rho: \Gamma \to G$, the cocompact domain of discontinuity $\Omega\subset G'/P'$ for $\rho(\Gamma)$ constructed in \cite{Guichard_Wienhard_DoD} (see Proposition~\ref{prop:dod_opq}) lifts to a cocompact domain of discontinuity $\tilde{\Omega}\subset G'/P''$ that contains~$\mathcal{U}$; the quotient $\rho(\Gamma)\backslash (\tilde{\Omega}\cap\overline{\mathcal{U}})$ provides a compactification of $\rho(\Gamma)\backslash G/H$.
\end{enumerate}
\end{proposition}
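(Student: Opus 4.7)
The plan is to reduce Proposition~\ref{prop:compactification_general_nonsym} to Proposition~\ref{prop:compactification_general} via the natural $G'$-equivariant projection $p : G'/P'' \to G'/P'$, exploiting the lifting mechanism of Remark~\ref{rem:lift-DoD}. Since $P'' \subset P'$ are parabolic subgroups of $G'$, the map $p$ is a smooth, proper, $G'$-equivariant fibration with compact fiber $P'/P''$.

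For part~(\ref{item:compactif_general_G/H}), I would proceed case by case according to Table~\ref{table3}. In each case, I would describe the candidate open $G$-orbit $\mathcal{U} \subset G'/P''$ explicitly, in the same spirit as the proofs of Proposition~\ref{prop:compactification_general}: namely as the set of flags in $G'/P''$ whose underlying linear data is non-degenerate or transverse with respect to the structure preserved by $G$. Openness would follow from a Lie-algebra argument at a chosen basepoint $x_0 \in \mathcal{U}$, by verifying that $\mathrm{Lie}(G) + \mathrm{Lie}(\mathrm{Stab}_{G'}(x_0)) = \mathrm{Lie}(G')$, equivalently that the infinitesimal $G$-orbit at $x_0$ has full dimension. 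The stabilizer $\mathrm{Stab}_G(x_0)$ would then be identified with $H$, yielding the diffeomorphism $\mathcal{U} \simeq G/H$; compactness of $G'/P''$ ensures that $\overline{\mathcal{U}}$ compactifies $G/H$.

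For part~(2), I would set $\tilde{\Omega} := p^{-1}(\Omega)$, where $\Omega \subset G'/P'$ is the cocompact $\rho(\Gamma)$-domain of discontinuity of Proposition~\ref{prop:dod_opq}. Since $p$ is a $G'$-equivariant fibration with compact fiber $P'/P''$, both proper discontinuity and cocompactness of the $\rho(\Gamma)$-action transfer from $\Omega$ to $\tilde{\Omega}$ (this is the content of Remark~\ref{rem:lift-DoD}). To check $\mathcal{U} \subset \tilde{\Omega}$, I would verify case by case that $p(\mathcal{U})$ lies inside the open $G$-orbit $\mathcal{U}_{\mathrm{sym}} \subset G'/P'$ exhibited in the proof of Proposition~\ref{prop:compactification_general} for the corresponding affine symmetric quotient; since that proof shows $\mathcal{U}_{\mathrm{sym}} \subset \Omega$, the desired inclusion follows. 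Then $\tilde{\Omega} \cap \overline{\mathcal{U}}$ is a closed $\rho(\Gamma)$-invariant subset of the cocompact $\tilde{\Omega}$, so the action on it is properly discontinuous and cocompact, and the quotient contains $\rho(\Gamma) \backslash \mathcal{U} \simeq \rho(\Gamma) \backslash G/H$ as an open dense subset, giving the required compactification.

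The main obstacle is the case-by-case bookkeeping: for each entry of Table~\ref{table3}, one must exhibit a basepoint $x_0 \in G'/P''$, identify $\mathrm{Stab}_G(x_0)$ with $H$, carry out the dimension count establishing openness of the $G$-orbit, and confirm that $p(\mathcal{U})$ lands in the open $G$-orbit used in Proposition~\ref{prop:compactification_general}. Provided Table~\ref{table3} is set up so that $P''$ is a parabolic subgroup of $G'$ naturally refining the $P'$ of Table~\ref{table2}, each such verification reduces to routine linear algebra on isotropic flags of the bilinear or Hermitian form preserved by $G'$, parallel to the computations already carried out in Section~\ref{sec:proof-comp-general}.
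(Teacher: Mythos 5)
Your proposal follows essentially the same route as the paper: exhibit an explicit basepoint in $G'/P''$ whose $G$-stabilizer is $H$, and pull back the cocompact domain of discontinuity of Proposition~\ref{prop:dod_opq} through the $G'$-equivariant projection $G'/P''\to G'/P'$ with compact fiber. One small correction: for the entries of Table~\ref{table3} the projected orbit $p(\mathcal{U})$ (\eg $\OO(4p,4q)\cdot W'_0\simeq \OO(4p,4q)/\U(2p,2q)$ inside $\Sp(2p+2q,2p+2q)/P'$ in case~(vii)) is not one of the orbits of Table~\ref{table2}, so rather than citing Proposition~\ref{prop:compactification_general} you should check directly --- as the paper does --- that $p(\mathcal{U})$ lies in the $G$-invariant open set $\{W'\mid W'\cap\RR^{4p+4q}=\{0\}\}$, which is disjoint from $\specialK_{\xi'}$ and hence contained in~$\Omega$.
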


\begin{table}[h!]
\centering
\begin{adjustwidth}{-1.2in}{-1.2in}
\centering
\begin{tabular}{|c|c|c|c|c|c|c|}
\hline
& $G$ & $H$ & $P$ & $G'$ & $P'$ & $P''$ \tabularnewline
\hline
(vii) & $\OO(4p,4q)$ & $\Sp(2p,2q)$ & $\mathrm{Stab}_G(\ell)$ & $\Sp(2p+2q,2p+2q)$ & $\mathrm{Stab}_{G'}(W')$ & $\mathrm{Stab}_{G'}(W''\subset W')$\tabularnewline
(viii) & $\Sp(4m,\RR)$ & $\OO^*(2m)$ & \(\mathrm{Stab}_G(\ell)\) & $\OO^*(8m)$ & $\mathrm{Stab}_{G'}(W')$ & \(\mathrm{Stab}_{G'}(W''\subset W')\)\tabularnewline
\hline
\end{tabular}
\end{adjustwidth}
\vspace{0.2cm}
\caption{Reductive groups $H\subset G\subset G'$ and parabolic subgroups $P$ of~$G$ and $P'\supset P''$ of~$G'$ to which Proposition~\ref{prop:compactification_general_nonsym} applies.
Here $m,p,q$ are any positive integers.
We denote by $\ell$ an isotropic line and by $W'$ a maximal isotropic subspace (over $\RR$ or~$\bH$), relative to the form $b$ preserved by $G$ or~$G'$.
We also denote by $(W'' \subset W')$ a partial flag of two isotropic subspaces with $W'$ maximal and $\dim_{\RR}(W')=2\,\dim_{\RR}(W'')$.}
\label{table3}
\end{table}

\begin{proof}[Proof of Proposition~\ref{prop:compactification_general_nonsym} in case~(vii) of Table~\ref{table3}]
Let us write $\bH = \RR + \RR i + \RR j + \RR k$ where $i=\sqrt{-1}$ and $ij=k$.
We identify $\bH^{p+q}$ with $\RR^{4p+4q}$, and see $H = \Aut_{\bH}(b_{\bH}^{p,q})$ as the subgroup of $G = \Aut_{\RR}(b_{\RR}^{4p,4q})$ commuting with the right multiplications by $i$ and by~$j$, which we denote respectively by $I,J \in G$.
The tensor product $\RR^{4p+4q} \otimes_\RR \bH$ can be realized as the set of ``formal'' sums
\[ \RR^{4p+4q} \otimes_\RR \bH = \big\{ v_1 + v_2 i + v_3 j + v_4 k \mid v_1, v_2, v_3, v_4 \in \RR^{4p+4q}\big\} . \]
Consider the real bilinear form $b$ on $\RR^{4p+4q} \otimes_\RR \bH$ given by
\[ b(v_\bH, v'_\bH) = b_{\RR}^{4p,4q}(v_1,v'_1) - b_{\RR}^{4p,4q}(v_2,v'_2) + b_{\RR}^{4p,4q}(v_3,v'_3) - b_{\RR}^{4p,4q}(v_4,v'_4) \]
for any $v_\bH = v_1 + v_2 i + v_3 j + v_4 k$ and $ v'_\bH = v'_1 + v'_2 i + v'_3 j + v'_4 k$ in $\RR^{4p+4q} \otimes_\RR \bH$, and let $b_{\bH}$ be the $\bH$-Hermitian form on $\RR^{4p+4q} \otimes_\RR \bH$ with real form~$b$.
Then $G' = \Sp(2p+2q,2p+2q)$ identifies with $\Aut_{\bH}(b_{\bH})$, and the natural embedding of $G = \Aut_{\RR}(b_{\RR}^{4p,4q})$ into~\(G'\) induces a natural embedding of \(\F_{1}(b_{\RR}^{4p,4q})\) into \(\F_{1}(b_\bH)\).

Let $\F_{p+q,2p+2q}(b_{\bH})$ be the space of partial flags $(W'' \subset W')$ of $\RR^{4p+4q} \otimes_\RR \bH$ with $W' \in \F_{2p+2q}(b_{\bH})$ and $\dim_{\bH}(W') = 2 \dim_{\bH}(W'')$; it identifies with $G'/P''$ and fibers $G'$-equivariantly over $\F_{2p+2q}(b_{\bH})\simeq G'/P'$ with compact fiber.
Consider the element $(W''_0 \subset W'_0) \in \F_{p+q,2p+2q}(b_{\bH})$ given by
\[ \left\{ \begin{array}{ccl}
  W''_0 & := & \{ v + (I v) i + (J v) j + (K v) k \mid v \in \RR^{4p+4q}\},\\
  W'_0 & := & \{ v + (I v) i + (J v') j + (K v') k \mid v,v' \in \RR^{4p+4q}\}.
\end{array} \right. \]
Its stabilizer in $G = \Aut_{\RR}(b_{\RR}^{4p,4q})$ is the set of elements \(g\) commuting with $I$ and~$J$, namely \(H = \Aut_{\bH}(b_{\bH}^{p,q})\).
Thus the $G$-orbit $\mathcal{U}$ of $(W''_0 \subset W'_0)$ in $\F_{p+q,2p+2q}(b_{\bH})$ identifies with $G/H$ and the closure $\overline{\mathcal{U}}$ of $\mathcal{U}$ in $\F_{p+q,2p+2q}(b_{\bH})\simeq G/P''$ provides a compactification of $G/H$.

If \(\rho: \Gamma \to G\) is \(P_1(b_{\RR}^{4p,4q})\)-Anosov, with boundary map $\xi : \partial_\infty \Gamma \to \F_{1}(b_{\RR}^{4p,4q})$, then it easily follows from Definition~\ref{def:Anosov} that the composed representation \(\rho': \Gamma \to G \hookrightarrow G'\) is \(P_1(b_\bH)\)-Anosov and that its boundary map $\xi' : \partial_\infty \Gamma \to \F_{1}(b_{\bH})$ is the composition of~$\xi$ with the natural inclusion $\F_1(b_{\RR}^{4p,4q}) \hookrightarrow \F_1(b_{\bH})$.
By Proposition~\ref{prop:dod_opq}.\eqref{item:dod1}, the group $\Gamma$ acts properly discontinuously and cocompactly, via~$\rho'$, on the complement $\Omega$ in \(\F_{2p+2q}(b_\bH)\) of
\[\specialK_{\xi'} = \bigcup_{\eta \in \partial_\infty \Gamma} \{ W' \in \F_{2p+2q}(b_\bH) \mid \xi'(\eta) \subset W'\}. \]
Since $\F_{p+q,2p+2q}(b_{\bH})$ fibers $G'$-equivariantly over $\F_{2p+2q}(b_{\bH})$ with compact fiber, $\Gamma$ also acts properly discontinuously, via~$\rho'$, on the preimage $\tilde{\Omega}$ of $\Omega$ in $\F_{p+q,2p+2q}(b_{\bH})$.
One checks that $\tilde{\Omega}$ contains the $G$-invariant open set
\[ \mathcal{U}' := \{ (W'' \subset W') \in \F_{p+q,2p+2q}(b_{\bH}) \mid W' \cap \RR^{4p+4q} = \{0\}\}, \]
which itself contains \((W''_0 \subset W'_0)\), hence~$\mathcal{U}$.
Thus $\Gamma$ acts properly discontinuously on $G/H$ via~$\rho$ and the quotient $\rho'(\Gamma)\backslash (\tilde{\Omega}\cap\overline{\mathcal{U}})$ provides a compactification of $\rho(\Gamma)\backslash G/H$.
\end{proof}

Case~(viii) of Table~\ref{table3} is similar to case~(vii): just replace the real quadratic form \(b_{\RR}^{4p,4q}\) on~$\RR^{4p+4q}$ with the symplectic form $\omega^{4m}_{\RR}$ on~$\RR^{4m}$, and \(b\) with the symplectic form $\omega^{4m}_{\RR}(v_1,v'_1) - \omega^{4m}_{\RR}(v_2,v'_2) + \omega^{4m}_{\RR}(v_3,v'_3) - \omega^{4m}_{\RR}(v_4,v'_4)$ on $\RR^{4m} \otimes_{\RR} \bH$.
The subgroup of \(G = \Aut_{\RR}(\omega_{\RR}^{4m})\) commuting with \(I\) and~\(J\) is \(H = \OO^*(2m)\).

\section{Topological tameness} \label{sec:tameness}

Lemma~\ref{lem:tameness_intro} is a particular case of the following general principle.
Recall that a \emph{real semi-algebraic set} is a set defined by real polynomial equalities and inequalities.

\begin{proposition}\label{prop:semialgebraic}
  Let \(X\) be a real semi-algebraic set and \(\Gamma\) a torsion-free discrete group acting on~$X$ by real algebraic homeomorphisms.
  Suppose \(\Gamma\) acts properly discontinuously and cocompactly on some open subset \(\Omega\) of~\(X\).
  Let \(\mathcal{U}\) be a \(\Gamma\)-invariant real semi-algebraic subset of~$X$ contained in \(\Omega\) (\eg an orbit of a real algebraic group containing \(\Gamma\) and acting algebraically on~\(X\)).
Then the closure \(\overline{\mathcal{U}}\) of $\mathcal{U}$ in~$X$ is real semi-algebraic and \(\Gamma
  \backslash (\overline{\mathcal{U}} \cap \Omega)\) is compact and has a triangulation such that \(\Gamma \backslash (\partial \mathcal{U} \cap \Omega)\) is a finite union of simplices.
If \(\mathcal{U}\) is a manifold, then \(\Gamma \backslash \mathcal{U}\) is topologically tame.
\end{proposition}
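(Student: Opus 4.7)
The plan is to establish the three conclusions in turn, with the triangulation as the central ingredient.

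First, $\overline{\mathcal{U}}$ is real semi-algebraic by the Tarski--Seidenberg theorem (topological closures of semi-algebraic sets are semi-algebraic), and so is the frontier $\partial \mathcal{U} = \overline{\mathcal{U}} \smallsetminus \mathcal{U}$. Since $\Gamma$ is torsion-free and acts properly discontinuously on $\Omega$, all stabilizers are finite hence trivial, so the action is free; together with cocompactness this makes $\Gamma \backslash \Omega$ a compact Hausdorff space. The set $\overline{\mathcal{U}} \cap \Omega$ is closed and $\Gamma$-invariant in $\Omega$, so its image $\Gamma \backslash (\overline{\mathcal{U}} \cap \Omega)$ is closed in $\Gamma \backslash \Omega$, hence compact.

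For the triangulation, the strategy is to reduce to the semi-algebraic triangulation theorem of {\L}ojasiewicz on a compact piece and then descend to the quotient. Choose a compact semi-algebraic set $K \subset \Omega$ whose $\Gamma$-translates cover $\Omega$, obtained as a finite union of small semi-algebraic neighborhoods of points of a compact set meeting every $\Gamma$-orbit. By proper discontinuity, the set $F := \{\gamma \in \Gamma \mid \gamma K \cap K \neq \emptyset\}$ is finite, and $\tilde{K} := \bigcup_{\gamma \in F} \gamma K$ is compact semi-algebraic. Apply the semi-algebraic triangulation theorem to $\tilde{K}$ compatibly with the finite collection of semi-algebraic subsets $K$, $\gamma K$ ($\gamma \in F$), $\overline{\mathcal{U}} \cap \tilde{K}$, and $\partial \mathcal{U} \cap \tilde{K}$. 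Then iteratively refine so that each gluing map $\gamma \colon \gamma^{-1} K \cap K \to K \cap \gamma K$ ($\gamma \in F$) becomes simplicial; this is possible because these are semi-algebraic homeomorphisms between semi-algebraic subsets of $\tilde{K}$, and any finite list of semi-algebraic subsets can be respected by a common refinement. The resulting $F$-equivariant triangulation descends to a finite topological triangulation of $\Gamma \backslash \Omega$ in which $\Gamma \backslash (\overline{\mathcal{U}} \cap \Omega)$ and $\Gamma \backslash (\partial \mathcal{U} \cap \Omega)$ are subcomplexes.

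For tameness, assume $\mathcal{U}$ is a manifold; then $M := \Gamma \backslash \mathcal{U}$ is a topological manifold since $\Gamma$ acts freely, continuously, and properly. Set $Y := \Gamma \backslash (\overline{\mathcal{U}} \cap \Omega)$ and $Z := \Gamma \backslash (\partial \mathcal{U} \cap \Omega)$, so $(Y, Z)$ is a finite compact simplicial pair with $Y \smallsetminus Z = M$ a manifold. Apply the PL regular neighborhood theorem (Hudson, Rourke--Sanderson): the closed star $N$ of $Z$ in the second barycentric subdivision of $Y$ is a mapping-cylinder neighborhood of $Z$, so $N \smallsetminus Z$ is homeomorphic to $\partial N \times [0, 1)$, and since $M$ carries a PL manifold structure from the semi-algebraic (Nash) triangulation of the smooth locus, $Y \smallsetminus \mathrm{int}(N)$ is a compact manifold with boundary $\partial N$. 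Consequently $M = Y \smallsetminus Z$, being the union of $Y \smallsetminus \mathrm{int}(N)$ and the open collar $\partial N \times [0, 1)$ glued along $\partial N$, is homeomorphic to the interior of the compact manifold-with-boundary $Y \smallsetminus \mathrm{int}(N)$, establishing tameness.

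The main obstacle is the equivariant refinement step: turning an arbitrary semi-algebraic triangulation of $\tilde{K}$ into one for which the finitely many gluing maps $\gamma|_{\gamma^{-1}K \cap K}$ are simultaneously simplicial, while keeping the subsets $\overline{\mathcal{U}}$ and $\partial \mathcal{U}$ as subcomplexes. This is a finite combinatorial process because $F$ is finite, but requires care. A secondary technicality is verifying that the semi-algebraic triangulation endows the smooth manifold $\mathcal{U}$ with a genuine PL manifold structure, needed for the PL regular neighborhood theorem to apply at the end.
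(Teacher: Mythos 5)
Your overall architecture (triangulate a compact piece compatibly with $\overline{\mathcal{U}}$ and $\partial\mathcal{U}$, then remove a regular/mapping-cylinder neighborhood of $\Gamma\backslash(\partial\mathcal{U}\cap\Omega)$) matches the paper's, but your route to the triangulation is different and contains a genuine gap at exactly the step you flag as ``the main obstacle.'' You triangulate \emph{upstairs}: you take a compact semi-algebraic $\tilde{K}\subset\Omega$ and claim you can refine a semi-algebraic triangulation of $\tilde{K}$ until the finitely many gluing homeomorphisms $\gamma\colon \gamma^{-1}K\cap K\to K\cap\gamma K$ are simultaneously simplicial, justifying this by the fact that finitely many semi-algebraic subsets can be made into subcomplexes. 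These are not the same thing. Making a list of subsets into subcomplexes is what the semi-algebraic triangulation theorem gives; making a \emph{homeomorphism} simplicial requires coordinated subdivisions of source and target, and since $\gamma^{-1}K\cap K$ and $K\cap\gamma K$ both live inside $\tilde{K}$ (and overlap the domains and ranges of the other gluing maps), each subdivision of a target forces, via $\gamma^{-1}$, a subdivision of a source, which forces further subdivisions elsewhere; this iteration has no reason to terminate. Worse, the preimage of a linear simplex under a semi-algebraic homeomorphism is not a simplex of any linear refinement, so ``refinement'' in the simplicial sense does not even apply directly. This is the classical difficulty of equivariant triangulation, and as written your proof does not resolve it.

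The paper avoids the issue entirely by triangulating \emph{downstairs}: since $\Gamma$ is torsion-free and the action on $\Omega$ is properly discontinuous, every point of $\Gamma\backslash(\overline{\mathcal{U}}\cap\Omega)$ has a neighborhood algebraically homeomorphic to a semi-algebraic set, so the compact quotient is a finite union of closed pieces $D_1,\dots,D_n$, each carrying its own semi-algebraic structure; one then triangulates $D_1$, then $D_2\smallsetminus\mathring{D_1}$, etc., compatibly with $\Gamma\backslash(\partial\mathcal{U}\cap\Omega)$ and with the mutual frontiers, and assembles by common refinement. No group element ever needs to act simplicially. If you want to salvage your fundamental-domain approach you would need an actual equivariant triangulation theorem, not an ad hoc refinement. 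Your final step (mapping-cylinder neighborhood of $Z$ in $Y$, collar $\partial N\times[0,1)$, complement a compact manifold with boundary) is at the same level of detail as the paper's; note only that the assertion that $Y\smallsetminus\mathrm{int}(N)$ is a manifold with boundary $\partial N$ quietly requires $\partial N$ to be an $(n-1)$-manifold bicollared in $M$, which your appeal to a ``PL structure from the semi-algebraic triangulation'' does not establish (a semi-algebraic triangulation of a smooth manifold is not known to be PL for free); the paper is equally brief here, so I do not count this as your gap, but be aware it is not automatic.
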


We use the notation $\mathring{D}$ for the interior of a subset $D$ of~$X$ and $\partial D=\overline{D}\smallsetminus\mathring{D}$ for its boundary.

\begin{proof}
The fact that \(\overline{\mathcal{U}}\) is itself real semi-algebraic is classical, see \cite[Cor.\,2.5]{Coste_SAG}.
It is also a general principle that orbits are real semi-algebraic, this follows \eg from \cite[Cor.\,2.4.(2)]{Coste_SAG}.
Consequently, every point in the open subset \(\overline{\mathcal{U}} \cap \Omega\) has a real semi-algebraic neighborhood, and the same property holds in the quotient space \(\Gamma \backslash (\overline{\mathcal{U}} \cap \Omega)\).
Thus \(\Gamma \backslash (\overline{\mathcal{U}} \cap \Omega)\) is a finite union of closed semi-analytic subsets \(D_1, \dots, D_n\), each algebraically homeomorphic to a semi-algebraic subset of a Euclidean space.

Applying \cite[Th.~3.12]{Coste_SAG}, we deduce that there is a triangulation of \(D_1\) such that the subsets \(D_1 \cap \Gamma \backslash (\partial \mathcal{U} \cap \Omega)\), \(\partial D_1 \cap D_2\), \dots{}, \(\partial D_1 \cap D_n\) are all finite unions of simplices.
  Similarly, there is a triangulation of \(D_2 \smallsetminus \mathring{D_1}\) such that the subsets \(\partial D_1 \cap D_2\), \((D_2 \smallsetminus \mathring{D_1}) \cap \Gamma \backslash (\partial \mathcal{U} \cap \Omega)\), \(\partial (D_2 \smallsetminus \mathring{D_1}) \cap D_3\), \dots{}, \(\partial (D_2 \smallsetminus \mathring{D_1}) \cap D_n\) are all unions of simplices.
By taking a refinement of these two triangulations, we obtain a triangulation of \(D_1 \cup D_2\) such that the subsets \(( D_1 \cup D_2) \cap \Gamma \backslash (\partial \mathcal{U} \cap \Omega)\), \(\partial ( D_1 \cup D_2) \cap D_3\), \dots{}, \(\partial ( D_1 \cup D_2) \cap D_n\) are all finite unions of simplices.
By induction, for any $1\leq k\leq n$ there is a triangulation of \(E_k := D_1 \cup \cdots \cup D_k\) such that the subsets \( E_k \cap \Gamma \backslash (\partial \mathcal{U} \cap \Omega)\), \(\partial E_k \cap D_{k+1}\), \dots{}, \(\partial E_k \cap D_n\) are all finite union of simplices.
For \(k=n\), this gives a triangulation of \(\Gamma \backslash (\overline{\mathcal{U}} \cap \Omega)\) such that \(\Gamma \backslash (\partial \mathcal{U} \cap \Omega)\) is a finite union of simplices.

This triangulation allows us to build a tubular neighborhood of \(\Gamma \backslash (\partial\mathcal{U} \cap \Omega)\) such that  \(\Gamma \backslash \mathcal{U}\) is homeomorphic to the complement of this tubular neighborhood. 
Thus, if \(\mathcal{U}\) is a manifold, then \(\Gamma \backslash \mathcal{U}\) is homeomorphic to the interior of a compact manifold with boundary.
\end{proof}

From Theorem~\ref{thm:intro_comp_quotient} and Lemma~\ref{lem:tameness_intro} we deduce the following.
Theorem~\ref{thm:tame-G/Gamma} corresponds to the special case where $\rho_R$ is constant.

\begin{theorem}\label{thm:tame-G/Gamma-LR}
Let $\Gamma$ be a torsion-free word hyperbolic group, $G$ a real reductive Lie group, and $\rho_L,\rho_R : \Gamma\to G$ two representations.
Let $\theta\subset\Delta$ be a nonempty subset of the simple roots of~$G$.
Suppose that $\rho_L$ is $P_{\theta}$-Anosov and uniformly $\omega_{\alpha}$-dominates~$\rho_R$ for all $\alpha\in\theta$.
Then $(\rho_L, \rho_R)(\Gamma)\backslash (G\times G)/\Diag(G)$ is a topologically tame manifold.
\end{theorem}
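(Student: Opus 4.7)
The plan combines three results from the excerpt: Proposition~\ref{prop:makeP1} to replace the pair $(G,\theta)$ by a classical group with a $P_1$-Anosov representation; Theorem~\ref{thm:compactification} to produce a cocompact domain of discontinuity in the flag-manifold compactification of the group manifold; and Lemma~\ref{lem:tameness_intro} to upgrade that to topological tameness of a chosen orbit.

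First, I would apply Proposition~\ref{prop:makeP1} to the subset $\theta\subset\Delta$ to obtain a nondegenerate bilinear form $b$ on a real vector space $V$ and a linear representation $\tau: G\to\Aut_\RR(b)$ such that $\tau\circ\rho_L$ is $P_1(b)$-Anosov and uniformly $\omega_{\alpha_1(b)}$-dominates $\tau\circ\rho_R$. Theorem~\ref{thm:compactification} then supplies an open subset $\Omega\subset X := \F_N(b\oplus -b)$ on which $\Gamma$ acts properly discontinuously and cocompactly via $\tau\circ\rho_L\oplus\tau\circ\rho_R$, and which contains the open $(\Aut_\RR(b)\times\Aut_\RR(b))$-orbit $\mathcal{U}_0$ from Theorem~\ref{thm:compactif-Aut(b)}. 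Inside $\mathcal{U}_0$ sits the $(\tau(G)\times\tau(G))$-orbit through the identity coset; via the identification $\mathcal{U}_0\simeq\Aut_\RR(b)$ (by $(g_1,g_2)\Diag\mapsto g_1 g_2^{-1}$), this orbit is the image $\tau(G)$, equivariantly identified with $(\tau(G)\times\tau(G))/\Diag(\tau(G))$.

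Second, I would apply Lemma~\ref{lem:tameness_intro} with ambient real reductive algebraic group $G'_{\mathrm{amb}}:=\Aut_\RR(b\oplus -b)$ acting on $X$, with reductive algebraic subgroup $H:=\tau(G)\times\tau(G)\subset G'_{\mathrm{amb}}$ (embedded block-diagonally), and with torsion-free discrete subgroup $\Gamma':=(\tau\circ\rho_L,\tau\circ\rho_R)(\Gamma)\subset H$ (torsion-free because $\Gamma$ is and the Anosov property forces $\tau\circ\rho_L$ to have finite kernel). Taking $\mathcal{U}$ to be the $H$-orbit described above, the lemma yields that $\Gamma'\backslash (\tau(G)\times\tau(G))/\Diag(\tau(G))$ is topologically tame.

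The hard part will be transferring this back to $(\rho_L,\rho_R)(\Gamma)\backslash (G\times G)/\Diag(G)$. The morphism $\tau$ induces a $\Gamma$-equivariant principal $(\ker\tau)$-bundle
\[
(G\times G)/\Diag(G) \,\simeq\, G \,\longrightarrow\, \tau(G) \,\simeq\, (\tau(G)\times\tau(G))/\Diag(\tau(G)),
\]
so the desired quotient fibers as a $(\ker\tau)$-bundle over a tame base. When $G$ is semisimple, $\ker\tau$ is finite and tameness of the total space is immediate. In the general reductive case, one reduces to the semisimple case by splitting off the connected center $Z(G)^\circ$: the $\Gamma$-action on $Z(G)^\circ$ is by translation through the central character $\gamma\mapsto\rho_L(\gamma)\rho_R(\gamma)^{-1}|_{Z(G)^\circ}$, and the quotient of the abelian Lie group $Z(G)^\circ\simeq\RR^{k_1}\times T^{k_2}$ by this properly discontinuous translation action is itself tame, so fibering the semisimple quotient by this tame fiber preserves tameness of the total space.
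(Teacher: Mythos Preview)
Your first three steps match the paper's proof exactly: apply Proposition~\ref{prop:makeP1} to obtain $\tau:G\to\Aut_\RR(b)$, use Theorem~\ref{thm:compactification} to get the cocompact domain $\Omega\subset\F_N(b\oplus -b)$ containing $\mathcal{U}_0$, and apply Lemma~\ref{lem:tameness_intro} to the $(\tau\times\tau)(G\times G)$-orbit to conclude that $(\tau\circ\rho_L\oplus\tau\circ\rho_R)(\Gamma)\backslash(\tau(G)\times\tau(G))/\Diag(\tau(G))$ is tame.

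The gap is in your transfer step. Your ``splitting off the center'' argument is not correct as stated: the element $\rho_L(\gamma)\rho_R(\gamma)^{-1}$ is not central in general, so there is no ``central character'' governing a translation action of $\Gamma$ on $Z(G)^\circ$. Even if one sets up the fibration $\Gamma\backslash G\to\Gamma\backslash(G/Z(G)^\circ)$ correctly, the fiber is $Z(G)^\circ$ itself (not a $\Gamma$-quotient of it), and the assertion that a fiber bundle with tame base and noncompact fiber has tame total space is neither standard nor proved here. Your argument would need substantial repair.

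The paper avoids this entirely with a cleaner trick: choose any \emph{injective} linear representation $\tau':G\to\GL_\RR(V')$ and use Theorem~\ref{thm:compactif-GL} to embed $(\GL_\RR(V')\times\GL_\RR(V'))/\Diag(\GL_\RR(V'))$ as an open orbit $\mathcal{U}'_0\subset\F_{N'}(V'\oplus V')$. Since $\F_{N'}(V'\oplus V')$ is compact, $\Gamma$ still acts properly discontinuously and cocompactly on $\Omega\times\F_{N'}(V'\oplus V')$. Now apply Lemma~\ref{lem:tameness_intro} to the $((\tau\times\tau)\times(\tau'\times\tau'))(G\times G)$-orbit of a point $(u,u')\in\mathcal{U}_0\times\mathcal{U}'_0$: because $\tau'$ is injective, the stabilizer of $(u,u')$ in $G\times G$ is exactly $\Diag(G)$, so this orbit \emph{is} $(G\times G)/\Diag(G)$, and tameness follows directly with no lifting argument needed.
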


For $G=\SO(p,1)$, this was first proved in \cite[Th.\,1.8 \& Prop.\,7.2]{Gueritaud_Kassel}.
In that case, tameness actually still holds when $\rho_L$ is allowed to be geometrically finite instead of convex cocompact.

\begin{proof}[Proof of Theorem~\ref{thm:tame-G/Gamma-LR}]
By Proposition~\ref{prop:makeP1}, there exist a nondegenerate bilinear form $b$ on a real vector space~$V$ and a linear representation $\tau : G \to \Aut_{\RR}(b)$ such that $\tau\circ\rho_L : \Gamma\to\Aut_{\RR}(b)$ is $P_1(b)$-Anosov and uniformly $\omega_{\alpha_1(b)}$-dominates $\tau\circ\rho_R$.
Let $\Omega$ be the cocompact domain of discontinuity of $(\tau\circ\rho_L \oplus \tau\circ\rho_R)(\Gamma)$ in $\F_N(b\oplus -b)$ given by Proposition~\ref{prop:dod_opq}.\eqref{item:dod1}.
By Theorem~\ref{thm:intro_comp_quotient}, it contains the open $(\Aut_{\RR}(b)\times\Aut_{\RR}(b))$-orbit $\mathcal{U}_0$ of Theorem~\ref{thm:compactif-Aut(b)}, which identifies with $(\Aut_{\RR}(b)\times\Aut_{\RR}(b))/\Diag(\Aut_{\RR}(b))$.
Let \(u\) be a point in~$\mathcal{U}_0$ with stabilizer equal to \(\Diag(\Aut_{\RR}(b))\).
Applying Lemma~\ref{lem:tameness_intro} to the $(\tau\oplus\tau)(G)$-orbit $\mathcal{U}$ of $u$ in~$\mathcal{U}_0$, we see that $(\tau\circ\rho_L \oplus \tau\circ\rho_R)(\Gamma)\backslash (\tau(G) \times \tau(G)) / \Diag(\tau(G))$ is a topologically tame manifold.
If $\tau$ has finite kernel, then $(\rho_L\oplus\rho_R)(\Gamma)\backslash (G\times G)/G$ is a topologically tame manifold as well.

However, in general $\tau$ might not have finite kernel.
To address this issue, we force injectivity by introducing another representation, as follows.
Let $\tau' : G \to \GL_{\RR}(V')$ be any injective linear representation of~\(G\) where $V'$ is a real vector space of dimension $N'\in\NN$.
The Grassmannian $\F_{N'}(V' \oplus\nolinebreak V')$ is compact, hence the action of \(\Gamma\) on \(\Omega \times \F_{N'}(V' \oplus\nolinebreak V')\) via
\[ (\tau\circ\rho_L \oplus \tau\circ\rho_R) \times (\tau'\circ\rho_L \oplus \tau'\circ\rho_R) \]
is properly discontinuous and cocompact.
By Theorem~\ref{thm:compactif-GL}, there is an open\linebreak $(\GL_{\RR}(V')\times\nolinebreak\GL_{\RR}(V'))$-orbit $\mathcal{U}'_0$ in $\mathcal{F}_{N'}(V' \oplus V')$ that identifies with
\[ (\GL_{\RR}(V')\times\GL_{\RR}(V'))/\Diag(\GL_{\RR}(V')). \]
Let $u'$ be a point in~$\mathcal{U}'_0$ with stabilizer \(\Diag(\GL_{\RR}(V'))\) in $\GL_{\RR}(V')\times\GL_{\RR}(V')$.
By injectivity of~$\tau'$, the stabilizer of $(u,u')$ in $G\times G$ for the action of $G\times G$ on $\F_N(V\oplus V) \times \F_{N'}(V'\oplus V')$ via $(\tau\oplus\tau) \times (\tau'\oplus \tau')$ is \(\Diag(G)\).
Applying Lemma~\ref{lem:tameness_intro} to the $((\tau\oplus\tau) \times (\tau'\oplus \tau'))(G)$-orbit $\mathcal{U}$ of $(u,u')$ and to $\Omega \times \F_{N'}(V' \oplus V')$ instead of~$\Omega$, we obtain that $(\rho_L, \rho_R)(\Gamma)\backslash (G\times G)/\Diag(G)$ is a topologically tame manifold.
\end{proof}


\end{document}